\newcommand{\K}{\lbar{K}}
\newcommand{\LOL}{\lbar{L}}
 \theoremstyle{plain}
 \newtheorem{thm}{Theorem}[section]
 \newtheorem{cor}[thm]{Corollary}
 \newtheorem{lem}[thm]{Lemma}
 \newtheorem{fact}[thm]{Fact}
 \newtheorem{prop}[thm]{Proposition}
\theoremstyle{definition}
 \newtheorem{defn}[thm]{Definition}
 \newtheorem{crit}[thm]{Criterion}
\theoremstyle{remark}
 \newtheorem{rem}[thm]{Remark}
 \newtheorem{ques}[thm]{Question}
 \numberwithin{equation}{section}
\DeclareMathOperator{\MM}{\mathcal{M}}
\DeclareMathOperator{\LL}{\mathcal{L}}
\DeclareMathOperator{\OO}{\mathcal{O}}
\DeclareMathOperator{\CC}{\mathcal{C}}
\DeclareMathOperator{\rk}{rk}
 \DeclareMathOperator{\Th}{Th}
 \DeclareMathOperator{\lh}{lh}
 \DeclareMathOperator{\td}{tr\,deg}
 \DeclareMathOperator{\cha}{char}
 \DeclareMathOperator{\ac}{\overline{ac}}
\newcommand{\Z}{\mathbb{Z}}
\newcommand{\R}{\mathbb{R}}
\newcommand{\Q}{\mathbb{Q}}
\newcommand{\p}{$p$\nobreakdash}
 \newcommand{\abs}[1]{\left\vert#1\right\vert}
 \newcommand{\set}[1]{\left\{#1\right\}}
 \newcommand{\seq}[1]{\left<#1\right>}
\newcommand{\limplies}{\rightarrow}
\newcommand{\liff}{\leftrightarrow}
\newcommand{\ex}[1]{\exists #1 \;} 
\newcommand{\fa}[1]{\forall #1 \;} 
\newcommand{\proves}{\vdash}
\newcommand{\fun}{\longrightarrow}
\newcommand{\efun}{\longmapsto}
\newcommand{\sub}{\subseteq}
\newcommand{\mi}{\setminus}
\newcommand{\lbar}{\overline}
\newcommand{\tsub}{\mathbin{\mathchoice
{\buildrel .\lower.6ex\hbox{\vphantom{.}} \over {\smash-}}%
{\buildrel .\lower.6ex\hbox{\vphantom{.}} \over {\smash-}}%
{\buildrel .\lower.4ex\hbox{\vphantom{.}} \over {\smash-}}%
{\buildrel .\lower.3ex\hbox{\vphantom{.}} \over {\smash-}}}}
\newcommand{\N}{\mathbb{N}}
\title{On logical characterization of henselianity}
\author{Yimu Yin}
\address{Department of Philosophy\\
Carnegie Mellon University\\
Pittsburgh, PA 15213, USA}
\email{yimu.yin@gmail.com}
\begin{document}

\begin{abstract}
We give some sufficient conditions under which any valued field
that admits quantifier elimination in the Macintyre language is
henselian. Then, without extra assumptions, we prove that if a
valued field of characteristic $(0,0)$ has a $\Z$-group as its
value group and admits quantifier elimination in the main sort of
the Denef-Pas style language $\mathcal{L}_{RRP}$ then it is
henselian. In fact the proof of this suggests that a quite large
class of Denef-Pas style languages is natural with respect to
henselianity.
\end{abstract}

\maketitle


\section{Introduction}

One of the most important tools in model-theoretic algebra is
quantifier elimination (QE). Tarski's Theorem laid the foundation
for the subsequent work along this line:

\begin{thm}[Tarski]\label{tarski:theorem}
The theory RCF of real closed fields, as formulated in the
language $\mathcal{L}_{OR}$ of ordered rings, admits QE.
\end{thm}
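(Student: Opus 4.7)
The plan is to apply the standard model-theoretic test for QE: a theory $T$ admits QE if and only if, whenever $\mdl{M}, \mdl{N} \models T$ share a common substructure $A$ and $\mdl{N}$ is $\abs{M}^+$-saturated, the inclusion $A \hookrightarrow \mdl{N}$ extends to an embedding $\mdl{M} \hookrightarrow \mdl{N}$. So I would fix two real closed fields $\mdl{M}, \mdl{N}$, a common ordered subring $A$, and take $\mdl{N}$ to be suitably saturated.

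First I would pass to the ordered fraction field $F = \quot(A)$, which embeds into both $\mdl{M}$ and $\mdl{N}$. Enumerate $\mdl{M} \mi F$ by an ordinal and extend the embedding $F \hookrightarrow \mdl{N}$ one element at a time, at each successor stage replacing $F$ by $F(a)$ for the newly adjoined $a \in \mdl{M}$ and taking unions at limits. If $a$ is transcendental over $F$, then its quantifier-free type over $F$ is determined by its Dedekind cut in $F$, and by saturation this cut is realized in $\mdl{N}$ by some $a'$ transcendental over $F$; the assignment $a \mapsto a'$ then extends to an ordered-field isomorphism $F(a) \cong F(a')$. If instead $a$ is algebraic over $F$, with minimal polynomial $p(x) \in F[x]$ and $a$ the $i$\nobreakdash-th smallest root of $p$ in $\mdl{M}$, I would send $a$ to the $i$\nobreakdash-th smallest root of $p$ in $\mdl{N}$ and verify that this is an ordered-field embedding $F(a) \hookrightarrow \mdl{N}$.

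The main obstacle is justifying this last step: it requires that the number of roots of $p$ in each interval with endpoints in $F$ be the same in every real closed extension of $F$, so that the $i$\nobreakdash-th roots in $\mdl{M}$ and $\mdl{N}$ occupy matching cuts over $F$ and thereby generate isomorphic ordered algebraic extensions. This is the content of Sturm's theorem, whose sign-change computations take place entirely inside $F[x]$, and it underlies the classical Artin--Schreier uniqueness of the real closure of $F$. Granted this algebraic input, the induction goes through and produces the desired embedding $\mdl{M} \hookrightarrow \mdl{N}$, establishing QE.
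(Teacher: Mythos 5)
The paper does not actually prove this statement --- it is quoted as Tarski's classical theorem --- so your proposal has to be judged on its own. The framework you set up (the saturation test for QE, passing to the ordered fraction field, transfinite extension of partial embeddings) is the standard route, and your algebraic step, matching $i$\nobreakdash-th roots and invoking Sturm's theorem and Artin--Schreier uniqueness, is acceptable as a sketch. The genuine gap is in the transcendental step: the claim that the quantifier-free type of a transcendental $a$ over $F$ is determined by its Dedekind cut in $F$ is false unless $F$ is real closed (equivalently, relatively algebraically closed in the ambient real closed field). For a counterexample take $F=\Q$ and, inside a large real closed field, $a=\sqrt{2}+\varepsilon$ and $a'=\sqrt{2}-\varepsilon$, where $\varepsilon$ is a positive infinitesimal over the real algebraic numbers: both are transcendental over $\Q$ and realize the same cut in $\Q$, yet $a^2-2>0$ while $(a')^2-2<0$, so $a\mapsto a'$ is not order-preserving and the cut does not pin down the quantifier-free type. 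Since you enumerate $\mdl{M}\setminus F$ arbitrarily, at the stage where a transcendental element is adjoined the current base $F$ will in general not be real closed, and this step of your induction breaks down.

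The repair is the classical one: do not interleave arbitrarily, but at each stage first extend the embedding to the relative algebraic closure of $F$ inside $\mdl{M}$, which is the real closure of $F$; this is precisely where Sturm's theorem and Artin--Schreier uniqueness are used, and it embeds over $F$ into $\mdl{N}$. Over a real closed base the sign of $p(a)$ for every $p\in F[x]$ is determined by the roots of $p$ in $F$ together with the cut of $a$ (polynomials factor into linear and positive-definite quadratic factors over a real closed field), so there the cut genuinely determines the quantifier-free type; by saturation the cut-type is realized in $\mdl{N}$, and any realization is automatically outside the (relatively algebraically closed) image of $F$, hence transcendental over it, giving the desired ordered-field isomorphism. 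With the induction organized so that the base is always kept relatively algebraically closed in $\mdl{M}$, your argument goes through.
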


Much later Macintyre proved a very important analog of this result
for \p-adic fields in~\cite{Ma76}:

\begin{thm}[Macintyre]\label{macintyre:theorem}
The theory of \p-adic fields, as formulated in the language
$\mathcal{L}_{Mac}$, admits QE.
\end{thm}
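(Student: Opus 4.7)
The plan is to apply the standard substructure embedding test for QE: it suffices to show that whenever $K, L$ are models of the theory of \p-adic fields with $L$ sufficiently saturated, every $\mathcal{L}_{Mac}$-embedding from an $\mathcal{L}_{Mac}$-substructure $A \subseteq K$ into $L$ extends to an $\mathcal{L}_{Mac}$-embedding $K \to L$. A standard iteration then reduces the task to adjoining a single element $b \in K$ to $A$ while preserving the quantifier-free $\mathcal{L}_{Mac}$-type.

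A key preliminary is that the valuation ring $\OO$ is quantifier-free definable in $\mathcal{L}_{Mac}$, via a formula of the shape ``$1 + p \cdot x^n$ is an $n$-th power'' for a suitable small $n$. Consequently any $\mathcal{L}_{Mac}$-embedding automatically preserves the valuation, the maximal ideal, the residue field $\mathbb{F}_p$, and, through the valuation, the value group $\Z$.

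Given the embedding $\iota\colon A \to L$ and an element $b \in K$, I would split into two cases. If $b$ is algebraic over the fraction field of $A$, I would look at its minimal polynomial $f$ and work inside the henselization of $A$ in $K$; Hensel's lemma together with Krasner's lemma, combined with saturation in $L$, then produces a root $b' \in L$ such that the finite list of $P_n$-conditions satisfied by $b$ over $A$ is mirrored by $b'$. If $b$ is transcendental over $A$, I would exploit that the value group is a $\Z$-group (so Presburger QE is available on the value group sort) and that the residue field is finite, so only finitely many residue classes modulo $n$-th powers must be matched for each relevant $n$; density of $L$ in the valuation topology, provided by saturation, then produces a suitable $b'$.

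The principal obstacle is the algebraic case: one must verify that the ``$n$-th power status'' of elements near $b$, as $n$ ranges, is determined by a \emph{finite} amount of information once the valuation and residue of $b$ over $A$ are pinned down. This is where Hensel's lemma plus the finite structure of $\Q_p^\times/(\Q_p^\times)^n$ does the real work: it collapses infinitely many $P_n$-conditions to finitely many tests on the valuation and on finitely many leading \p-adic digits, all of which are already encoded in the quantifier-free $\mathcal{L}_{Mac}$-type of $b$ over $A$. Verifying this reduction cleanly — in particular that the henselization of $A$ sits inside both $K$ and (via $\iota$) $L$ in a compatible way — is the step where one actually uses that the ambient fields are henselian.
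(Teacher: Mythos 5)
This theorem is not proved in the paper at all: it is a quoted background result, cited from~\cite{Ma76}, so your sketch has to be judged on its own terms rather than against an argument in the text. The route you choose --- the saturated-model embedding test, reduction to adjoining a single element, Hensel's and Krasner's lemmas in the algebraic case, finiteness of $\Q_p^{\times}/(\Q_p^{\times})^{n}$, Presburger structure of the value group --- is the standard modern strategy (essentially that of Prestel--Roquette) and can be made to work. Two small remarks: in the formulation of $\mathcal{L}_{Mac}$ used in this paper the predicate $\OO$ is primitive, so quantifier-free definability of the valuation ring is not needed (in Macintyre's original language it is, and your defining formula must be adjusted for $p=2$); and the embedding test requires both $K$ and $L$ to be models of the full theory of $\Q_p$, which you use implicitly when invoking henselianity of the ambient fields.

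The genuine gap is that in both of your cases the actual work is missing, and it is the same work: to extend the embedding past $b$ you must realize in $L$ the \emph{entire} quantifier-free $\mathcal{L}_{Mac}$-type of $b$ over $A$, that is, the conditions $f(b)=0$ or $f(b)\neq 0$, $\OO(f(b)/g(b))$, and $P_{n}(f(b))$ for \emph{all} polynomials $f,g$ over $A$ and all $n$, not merely finitely many conditions on $b$ itself. Saturation reduces this to finite satisfiability, but proving finite satisfiability requires controlling $v(f(b))$ and the $P_n$-status of $f(b)$ for arbitrary $f\in A[X]$, and your sketch never says how this is done. The standard resolution is a preparation step you do not perform: first enlarge $A$ (inside both $K$ and, via the embedding and saturation, inside $L$) to a substructure that is itself a henselian, indeed $p$-adically closed, subfield relatively algebraically closed in $K$ --- this is where Hensel, Krasner, and the openness and finite index of each $P_n$ genuinely enter --- and only then treat a transcendental $b$, for which one factors each $f\in A[X]$ over the prepared $A$ and shows that $v(f(b))$ and the coset of $f(b)$ modulo each $P_n$ are determined by finitely many data of the form $v(b-a)$ and $(b-a)P_n$ with $a\in A$. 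Without that reduction, your transcendental case (``only finitely many residue classes must be matched'') does not engage the difficulty, and in your algebraic case the assertion that the $P_n$-conditions satisfied by $b$ ``are mirrored by $b'$'' is precisely the statement that needs proof rather than a consequence of Hensel plus Krasner as stated.
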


A crucial question for the algebraic structure of a field is of
course under what conditions polynomials have roots. Properties
that answer this question in real closed fields and \p-adic fields
are essential to the proofs of the above two theorems. They are of
course real-closedness and henselianity, respectively. One may
raise the question: Is QE equivalent to these properties after
all? For real closed fields there is a good answer:

\begin{thm}[Macintyre, McKenna, van den
Dries]\label{converse:qe:realclosed} Let $K$ be an ordered field
such that the theory of $K$ in $\mathcal{L}_{OR}$ admits QE. Then
$K$ is real closed.
\end{thm}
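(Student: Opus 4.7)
The plan is to verify the two classical conditions that, together with $K$ being ordered, characterize real closedness: every positive element of $K$ is a square, and every polynomial in $K[x]$ of odd degree has a root in $K$.

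For the square-root property, I would apply QE to the parameter-free formula $\phi(x) := \exists y\, y^2 = x$. Its quantifier-free equivalent $\psi(x)$ is a Boolean combination of atomic formulas $p_i(x) = 0$ and $p_i(x) > 0$ with $p_i \in \Z[x]$. For $|x|$ sufficiently large in $K$, each $p_i(x)$ takes the sign of its leading coefficient, so the truth value of $\psi$ stabilizes on some half-line $x > M$ for an $M \in \N$. Since $\psi(N^2)$ manifestly holds for large $N \in \N$, this stable value must be ``true''. Given any $c > 0$ in $K$, I choose $N \in \N$ with $N^2 c > M$, write $N^2 c = d^2$ for some $d \in K$, and extract $c = (d/N)^2$.

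For odd-degree roots, the plan is first to establish an intermediate value property for polynomials in $K$: if $g \in K[x]$ and $a < b$ in $K$ with $g(a) g(b) < 0$, then $g(c) = 0$ for some $c \in (a, b)$. Applied to a monic $f$ of odd degree $n$ on an interval $[-M, M]$ with $M$ so large that the leading term $x^n$ dominates the sign of $f(\pm M)$, this forces $f(-M) < 0 < f(M)$ and yields a root. To prove the IVT itself, I would apply QE to the formula $\exists x\, f(x) = c$; its quantifier-free equivalent $\chi(c)$ defines the range $f(K) \subseteq K$. A sign-stabilization analysis mirroring the square-root step shows that $\chi(c)$ holds for all sufficiently large $|c|$, and combined with the preceding step one aims to show that $f(K)$ is convex and hence contains every value between any two of its elements.

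The principal obstacle is establishing this convexity. A priori, $f(K)$ could be a finite union of intervals whose gap endpoints are algebraic over $K$ but lie in the real closure $\tilde{K} \setminus K$. To rule out such a gap, I would perform a local analysis at a candidate boundary point: at $x_0 \in K$ with $f(x_0) = a$ a local extremum of the range, one has $f'(x_0) = 0$, so $f(x) - a = (x - x_0)^2 h(x)$ for some $h \in K[x]$ with $h(x_0)$ of definite sign. The square-root property then lets us take square roots of $h(x_0)^{-1}(c - a)$ for $c$ close to $a$ on the appropriate side, and a Newton-style lifting, proved by induction on $n$, should produce $y \in K$ with $f(y) = c$, filling the alleged gap. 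Making this lifting rigorous purely from the QE hypothesis, without presupposing the full order-completeness of $K$, is the core technical hurdle.
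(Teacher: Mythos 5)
The paper does not actually prove Theorem~\ref{converse:qe:realclosed}; it quotes it from~\cite{MMV83}, and Section~\ref{section:prelim} only outlines the three-step approximation technique behind it, so your proposal has to be judged against that technique. Your square-root step is essentially sound, but as written it fails for non-archimedean $K$: if $c>0$ is infinitesimal with respect to $\Z$ then no $N\in\N$ satisfies $N^2c>M$. The repair is immediate---scale by elements of $K$ rather than integers (choose $b\in K$ with both $b>M$ and $cb>M$, write $b=d^2$, $cb=e^2$, so $c=(e/d)^2$)---so this is a slip, not a structural problem.

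The genuine gap is the odd-degree step, and you have named it yourself: the convexity of $f(K)$, i.e.\ the intermediate value property, is the entire content of the theorem at that point, and your sketch does not establish it. The local analysis presupposes that the boundary value $a$ of a gap of $f(K)$ is attained at some $x_0\in K$ with $f'(x_0)=0$; but, as you observe, the gap endpoints lie in $\tilde{K}\setminus K$, and neither they nor the relevant critical points of $f$ need belong to $K$, so there may be no such $x_0$ to analyze. Moreover the ``Newton-style lifting'' you invoke is a Hensel/completeness-type property of the ordered field $K$---essentially the very thing being proved---and it cannot be extracted from the QE hypothesis by induction on the degree alone. The route of~\cite{MMV83}, reflected in Steps~1--3 of Section~\ref{section:prelim}, diverges exactly here: one assumes a counterexample (a sign-changing, e.g.\ monic odd-degree, polynomial without a root in $K$), uses QE together with the fact that quantifier-free formulas without equational conjuncts define open sets to produce a nonempty open set of coefficient tuples of counterexamples, and then contradicts openness by approximating a factorization $F(X)=(X-r)F^*(X)$ over the real closure by data from $K$; the density needed for this approximation is secured by reducing to an archimedean (rank-one) situation, e.g.\ via the subfield of algebraic elements or an elementarily equivalent structure obtained from the Omitting Types Theorem. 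That approximation/density argument is precisely the ingredient missing from your proposal, so as it stands the proof is incomplete.
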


This result is established in~\cite{MMV83}, in which the authors
actually give a quite general technique that can be used to
establish other similar ``converse QE'' results for various kinds
of fields. In particular they have the following analogous result
for \p-fields:

\begin{thm}[Macintyre, McKenna, van den
Dries]\label{converse:qe:p-field} Let $K$ be a \p-field such that
the theory of $K$ in $\mathcal{L}_{Mac}$ admits QE. Then $K$ is
\p-adically closed.
\end{thm}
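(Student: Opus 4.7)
The plan is to follow the general converse-QE strategy of \cite{MMV83} and reduce the problem to showing that $K$ is henselian. Since $K$ is already assumed to be a \p-field, its value group is a $\Z$-group and its residue field is $\mathbb{F}_p$, so adjoining henselianity yields exactly the axioms of \p-adically closed fields, which are first-order in $\mathcal{L}_{Mac}$. Hence it suffices to prove that $K$ is henselian.

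To this end I would aim to embed $K$ into a \p-adically closed field and then use QE to promote the embedding to an elementary one, thereby transferring henselianity back to $K$. The natural candidate is the henselization $K^h$, an immediate algebraic extension of $K$ that is itself \p-adically closed. The principal task then becomes showing that the inclusion $K \hookrightarrow K^h$ preserves every Macintyre predicate $P_n$, i.e.\ that no element of $K$ acquires a new $n$-th root in $K^h$. Once this is established, QE of $\Th(K)$ in $\mathcal{L}_{Mac}$ (which in particular gives model-completeness) forces the embedding to be elementary, and henselianity of $K^h$ descends to $K$.

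The main obstacle is exactly this preservation of the $P_n$, which in general fails for \p-fields --- for instance, $6$ acquires a square root in $\Q_5$ that $\Q$ lacks --- and so must be extracted from the QE hypothesis rather than taken for granted. Following the paradigm of \cite{MMV83}, I would argue by contradiction: if some $a \in K$ were to become a new $n$-th power in $K^h$, one exploits the Galois conjugates of a chosen root $\sqrt[n]{a}$ in $K^{\mathrm{alg}}$ to construct two distinct $K$-embeddings of $K(\sqrt[n]{a})$ into a common saturated \p-adically closed extension whose QF-$\mathcal{L}_{Mac}$-diagrams over $K$ disagree on some carefully chosen polynomial-plus-$P_m$ combination; this contradicts the QE principle that the complete $\mathcal{L}_{Mac}$-type of a tuple over $K$ is determined by its QF-part. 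Carrying this out rigorously requires a Kummer-theoretic analysis of $n$-th power classes in \p-valued fields combined with the amalgamation/back-and-forth machinery developed in \cite{MMV83}, and this constitutes the technical heart of the argument.
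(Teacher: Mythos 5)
Your transfer step is circular, and this is fatal to the proposed route. Quantifier elimination for $\Th(K)$ does give model-completeness of $\Th(K)$, but model-completeness only makes an embedding elementary when \emph{both} structures are models of $\Th(K)$. You have no grounds for asserting $K^h \models \Th(K)$: if $K$ is not henselian then $K \not\equiv K^h$ (henselianity is expressible by an axiom scheme in $\mathcal{L}_{Mac}$), so knowing that $K^h$, or the saturated $p$-adically closed field into which you later embed $K(\sqrt[n]{a})$, satisfies $\Th(K)$ is essentially the conclusion you are trying to prove. For the same reason the "QE principle that the complete $\mathcal{L}_{Mac}$-type over $K$ is determined by its QF-part" cannot be applied to tuples realized in a $p$-adically closed extension unless that extension is already known to model $\Th(K)$. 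Consequently, even a successful Kummer-theoretic proof that no $P_n$ grows in passing to $K^h$ would only give you an $\mathcal{L}_{Mac}$-embedding, not elementarity, and no descent of henselianity. A secondary gap: the opening reduction asserts that a $p$-field has a $\Z$-group as value group, so that $K^h$ is $p$-adically closed. Under the definition in force here a $p$-field is merely an $\mathcal{L}_{Mac}$-substructure of a $p$-adically closed field $L$, so $vK$ is only a subgroup of a $\Z$-group with the same least positive element $v(p)$, and such subgroups need not be $\Z$-groups (e.g.\ $\Z \times \Z$ inside a $\Z$-group of the form $\Z \times \Q$); that $vK$ is a $\Z$-group is part of what the theorem asserts and itself requires the QE hypothesis.

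The argument the paper relies on (from \cite{MMV83}, recapped as Steps 1--3 in Section~\ref{section:prelim}) runs in the opposite direction: suppose henselianity fails; by QE the coefficient tuples of counterexamples to Hensel's Lemma of a fixed degree are defined by a quantifier-free formula in disjunctive normal form; an algebraic manipulation shows some disjunct has no equational conditions and hence, because each $P_n$ is clopen in the valuation topology (this is exactly where the special $p$-field interpretation of $P_n$ as the trace of $P_n^L$ is used), defines a nonempty open set of counterexamples. One then works in the field $A$ of algebraic numbers of $K$, whose value group is $\Z$, hence of rank~1, so that $A$ is dense in its henselization; approximating the factorization $F(X,\bar{a}) = (X+r)F^*(X,\bar{b})$ by elements of $A$ produces a tuple inside that open set whose polynomial does have a root in $K$, a contradiction. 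In short, the known proof refutes non-henselianity by a density/approximation argument applied to definable open sets, rather than transferring henselianity down from $K^h$ by elementarity; if you want to keep your outline you would need to prove directly that $K$ is existentially closed in $K^h$, and QE for $\Th(K)$ alone does not give that.
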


The definition of a \p-field $K$ is rather special: it is a
substructure of a \p-adically closed field $L$ (of \p-rank 1) with
respect to $\mathcal{L}_{Mac}$. The point is that, as $L$ is
henselian, each $n$th power predicate $P_n$ defines a clopen
subset of $K$ in the valuation topology of $K$, which is essential
to the proof of the theorem. This way to interpret each $P_n$ is
obviously unsatisfactory since an element in $P_n$ may not be an
$n$th power at all in $K$. Hence it is asked in~\cite{MMV83} to
extend the result to the class of valued fields where $P_n$ is
simply interpreted as the group of $n$th powers. In
Section~\ref{section:macintyre} we shall give some sufficient
conditions under which any such valued field that admits QE in
$\mathcal{L}_{Mac}$ is henselian. This addresses a question
in~\cite{cluckers:2002}. In fact this result holds for certain
finitely generated valued fields without QE; see
Section~\ref{section:henselianity:without:qe}.

There are variations and extensions of $\mathcal{L}_{Mac}$ in
which QE results for larger classes of valued fields have been
obtained, for example,~\cite{F:delon:1981,PrRo84}. There are yet
more languages which give rise to different techniques of QE in
valued fields and which cannot be subsumed under the Macintyre
style. The most notable among these is the Denef-Pas style, a
mature form of which is given in~\cite{Pa89}. In
Section~\ref{section:denef:pas} we shall show that any valued
field that admits QE in the main sort in the prototypical
Denef-Pas language $\mathcal{L}_{RRP}$, which is introduced
in~\cite{Pa89}, is henselian. In fact the proof of this suggests
that the result holds for a quite large class of Denef-Pas style
languages. This answers a question mentioned
in~\cite{cluckers:2002}.

Finally in Section~\ref{section:general:pers} a general
perspective on QE and converse QE results is described.

\section{Preliminaries}\label{section:prelim}

In this paper all valued fields are of characteristic 0 and all
valuation rings are proper subrings. We use $\OO$, $\OO_1$, etc.
and $\MM$, $\MM_1$, etc. to denote valuation rings and their
maximal ideals, respectively. Valuation maps are denoted by $v$,
$v_1$, etc. If $v$ is a valuation of $K$ then $vK$, $\K$ stand for
the corresponding value group and residue field, respectively.

The Macintyre language $\mathcal{L}_{Mac}$ for valued fields
contains the language of rings $\mathcal{L}_R$, $\{+, -, \cdot, 0,
1\}$, a unary predicate $\OO$ for valuation rings, and unary
predicates $P_n$ for all $n > 1$, which are usually interpreted as
the sets of nonzero $n$th powers.

\begin{defn}
Let $d$ be a fixed natural number. A \emph{\p-adically closed field
of \p-rank $d$} is a valued field such that
\begin{enumerate}
 \item the value group is a $\Z$-group with least positive element 1;
 \item the dimension of the $\mathbb{F}_p$-module $\OO/(p)$ is $d$, which is to say that
 the residue field is a finite extension of $\mathbb{F}_p$ of dimension $f$,
 $v(p) = e \cdot 1$ for some $e \in \N$, and $d = e \cdot f$;
 \item Hensel's Lemma holds.
\end{enumerate}
\end{defn}

Prestel and Roquette extended Theorem~\ref{macintyre:theorem} to
the class of \p-adically closed fields of finite \p-ranks,
providing that for each \p-rank $d$ one expands
$\mathcal{L}_{Mac}$ by adding $d$ new constants that serve as a
$\mathbb{F}_p$-basis of $\OO/(p)$; see~\cite[Theorem~5.6]{PrRo84}.

The proof of Thereom~\ref{converse:qe:p-field} relies on the
approximation technique devised in~\cite{MMV83}. In general this
technique consists of the following three steps. Let $(K, v)$ be a
valued field such that $\Th(K)$ admits QE (in the main sort) in
some language for valued fields, where $\Th(K)$ denotes the theory
of $K$ as a structure of the language in question. Let $\OO, \MM$
be its valuation ring and maximal ideal. For convenience,
throughout this paper, by valuation topology we mean the topology
on $K^{\times}$ (instead of $K$) that is induced by the valuation;
see Remark~\ref{why:tau*}.

\begin{itemize}

\item \textbf{Step 1}. Fix a syntactical notion of ``simple'' formulas.
This usually includes all the literals. Show that all
``simple'' formulas, except equations in the field, define
open sets in (the product of) the valuation topology. This is
where the rather special interpretation of $P_n$ in a \p-field
$K$ is needed in~\cite{MMV83}, which guarantees that $P_n$ is
a clopen subgroup of $K^{\times}$. Note that $P_n$ is not
closed in the valuation topology on $K$ as there is no open
neighborhood of $0$ that does not intersect with $P_n$. Also
note that, for each formula $\varphi(X)$, that it defines an
open set can be expressed by a first-order sentence:
\[
\fa{X} (\varphi(X) \limplies \ex{Y} (v(Y) > v(X) \wedge \fa{Z}
(v(Z) > v(Y) \limplies \varphi(X + Z)))).
\]

\item \textbf{Step 2}. Suppose that a monic polynomial $F(X,
\bar{a}) \in \OO[X]$ is a counterexample to a version of
Hensel's Lemma, where $\bar{a}$ are the (nonzero)
coefficients. For example, $F(s, \bar{a}) \in \MM$ but $F'(s,
\bar{a}) \notin \MM$ for some $s \in \OO$ and $F(X, \bar{a})$
has no root in $K$. By assumption, the formula that defines
the tuples of the coefficients of all such counterexamples for
a fixed degree is equivalent to a formula $\varphi$ that is
quantifier-free (in the main sort) and is in disjunctive
normal form. Through some algebraic manipulations it can be
shown that one of the disjuncts $\varphi_0$ of $\varphi$
defines a nonempty set $\varphi_0(K^{n})$ that is not
contained in any proper Zariski closed subset of $K^{n}$; that
is, $\varphi_0$ lacks equational conditions and hence, by
Step~1, defines a nonempty open set in $K^{n}$. Without loss
of generality $\bar{a} \in \varphi_0(K^{n})$. For details
see~\cite[Theorem~1,~4]{MMV83}.

\item \textbf{Step 3}. If $K$ is dense in its henselization
$K^h$ then the approximation can be carried out as follows:
Choose a root $r \in K^h$ of $F(X, \bar{a})$ and write
\[
F(X, \bar{a}) = (X + r)F^*(X, \bar{b}),
\]
where $\bar{b} \in K^h$ are the (nonzero) coefficients of
$F^*$. Let $U \sub \varphi_0(K^{n})$ be an open neighborhood
of $\bar{a}$, where $\varphi_0$ is as in Step~2. Now we can
choose $r', \bar{b}' \in K$ that are arbitrarily close to $r,
\bar{b}$ with respect to the valuation. Write
\[
F(X, \bar{a}') = (X + r')F^*(X, \bar{b}').
\]
So $\bar{a}' \in U$, which contradicts the choice of $U$.

However, in general $K$ is not dense in its henselization. The
solution to this in~\cite{MMV83} is to consider the field $A$
of algebraic numbers of $K$. By the assumptions there, in
particular that $K$ is a \p-field, $A$ cannot be henselian. On
the other hand, $A$ has $\Z$ as its value group, which is an
ordered abelian group of rank~1 (that is, a subgroup of the
additive group of $\R$ with the canonical ordering). It is
well-known that if a valuation $v$ for $K$ is of rank~1 then
$K$ is dense in its henselization; see the discussion
in~\cite[p.~53]{engler:prestel:2005}.

One may use a more general method to deal with this problem.
Using the Omitting Types Theorem, another valued field $(L,
w)$ may be constructed such that $(L, w)$ is elementarily
equivalent to $(K, v)$ with respect to the language in
question and $w$ is of rank~1. For example, this method is
used in~\cite{Dickmann1987} to obtain a converse QE result for
real closed valuation rings. We will also use it below to
establish a few converse QE results.

\end{itemize}

Note that Step~2 can always be implemented for any valued field that
is not henselian. So the bulk of the work in the sequel will
concentrate on Step~1 and Step~3.

Next we will describe languages of a quite different kind, namely
the Denef-Pas style languages.

\begin{defn}
Let $K$ be a valued field and $\K$ its residue field. An
\emph{angular component map} is a function $\ac: K \fun \K$ such
that
\begin{enumerate}
 \item $\ac 0 = 0$,
 \item the restriction $\ac \upharpoonright K^{\times}$ is a group
 homomorphism $K^{\times} \fun \K^{\times}$,
 \item the restriction $\ac \upharpoonright (\OO \mi \MM)$ is the
 projection map, that is, $\ac u = u + \MM$ for all $u \in \OO \mi
 \MM$.
\end{enumerate}
\end{defn}

The template of Denef-Pas style languages has three sorts: the field
sort which is the main sort, the residue field sort, and the value
group sort. These are usually denoted by $K$, $\K$, and $\Gamma$.
The $K$-sort and $\K$-sort use the language $\mathcal{L}_R$ of
rings. The $\Gamma$-sort uses the langauge $\mathcal{L}_{OG}$ of
ordered groups, $\{+, <, 0\}$, and an additional symbol $\infty$
that designates the top element in the ordering. There are two
cross-sort function symbols: $v: K \fun \Gamma$, which stands for
the valuation, and $\ac: K \fun \K$, which stands for an angular
component map.

Any language that expands this template is a Denef-Pas language. A
prototypical example is the language $\mathcal{L}_{RRP}$ used
in~\cite{Pa89}, in which the field sort and the residue field sort
use the language $\mathcal{L}_R$ and the $\Gamma$-sort uses the
language $\mathcal{L}_{Pr\infty} = \mathcal{L}_{Pr} \cup
\{\infty\}$, where $\mathcal{L}_{Pr}$ is the Presburger language
$\{+, -, <, 0, 1\} \cup \{D_n: n > 1\}$. Let $S = \langle K, \K,
\Gamma \cup \set{\infty}, v, \ac \rangle$ be a structure of
$\mathcal{L}_{RRP}$. One of the main results of~\cite{Pa89} is
that if $K$ is henselian and both $K$ and $\K$ are of
characteristic 0 then $\Th(S)$ admits QE in the $K$-sort; that is,
for every formula $\varphi$ in $\mathcal{L}_{RRP}$ there is a
formula $\varphi^*$ in $\mathcal{L}_{RRP}$ that does not contain
$K$-quantifiers such that $S \models \varphi \liff \varphi^*$. A
converse of this with respect to henselianity will be established
in Section~\ref{section:denef:pas}.

The following notions are formulated for any Denef-Pas language
$\LL$, where we use $\LL_{K}$, $\LL_{\K}$, and $\LL_{\Gamma\infty}$
to denote the languages used by the three sorts.

\begin{defn}
A formula $\varphi$ in $\LL$ is \emph{simple} if $\varphi$ does not
contain any $K$-quantifiers.
\end{defn}

\begin{defn}
A formula $\varphi$ in $\mathcal{L}_{K} \cup
\mathcal{L}_{\Gamma\infty}$ is a \emph{$\Gamma$-formula} if it does
not contain $K$-quantifiers and atomic formulas in
$\mathcal{L}_{K}$. Similarly a formula $\varphi$ in $\mathcal{L}_{K}
\cup \mathcal{L}_{\K}$ is a \emph{$\K$-formula} if it does not
contain $K$-quantifiers and atomic formulas in $\mathcal{L}_{K}$.
\end{defn}

\section{Henselianity and the Macintyre language}\label{section:macintyre}

In this section we shall describe some conditions under which any
valued field that admits QE in the Macintyre language
$\mathcal{L}_{Mac}$ is henselian. The bulk of the work will
concentrate on the density condition in Step~3. To satisfy that
one can certainly impose some Galois theoretic conditions on $K$
that guarantees that $K$ is dense in its henselization;
see~\cite[Theorem~2.15]{engler:1978:a}. However this does not seem
to be very satisfactory either as it does not bear much on the
intrinsic algebraic structure of the valued field in question.
Below more elementary conditions will be given. An obvious
advantage of this approach is that one can easily construct such
valued fields. We assume that the reader is familiar with the
basics of the theory of valued fields. A good source for this
is~\cite{engler:prestel:2005}.

There will be different conditions depending on whether the
residue characteristic is zero. But first we shall describe some
concepts that are used in these conditions. Let $(L, w)$ be a
valued field. Let $\OO$ be the valuation ring and $\MM$ its
maximal ideal.

For $r,t \in \OO$ we say that they are \emph{comparable}, written
as $r \asymp t$, if there is a natural number $n$ such that either
$w(r^{n}) \leq w(t) \leq w(r^{n+1})$ or $w(t^{n}) \leq w(r) \leq
w(t^{n+1})$. They are \emph{incomparable} if they are not
comparable. We write $r \ll t$ if $r,t$ are incomparable and $w(r)
< w(t)$. If $t \in A \sub L$ and the set $\set{nw(t): n \in
\mathbb{N}}$ is cofinal in the set $\set{w(r) : r \in A}$ then we
say that $t$ is a \emph{cofinal element} in $A$. Note that for all
units $r \in \OO \mi \MM$ and all $s \in \MM$ we have $r \ll t$.
Obviously $r \ll 0$ for any nonzero $r \in \OO$. For $t \in \MM$
we write $\cha (\LOL) \ll t$ if either $\cha (\LOL) = 0$ or $\cha
(\LOL) = p > 0$ and $p \ll t$. If $r \ll t$ for every $r \in A
\sub \OO$ then we simply write $A \ll t$. Similarly we write $A
\asymp t$ if there is an $r \in A$ such that $r \asymp t$ and $r$
is a cofinal element in $A$.

If $R$ is a subring of a field $L$ then we write $R^L$ for the
integral closure of $R$ in $L$. For any $A \sub L$ we write
$\Q(A)$ for the smallest subfield generated by $A$ in $L$. Note
that $\Q(A)^L$ is the algebraic closure of $\Q(A)$ in $L$ and
$(\Q(A) \cap \OO)^L \sub \Q(A)^L \cap \OO$.

\begin{defn}\label{def:prohenselianity}
We say that $(L,w)$ is of \emph{prohenselian degree n} if for any
natural number $1 \leq m \leq n$ the valuation ring $\Q(t_1,
\ldots, t_m)^L \cap \OO$ admits a henselian coarsening for every
sequence $t_1, \ldots, t_m \in \MM$ with $t_m \gg \ldots \gg t_1$.
If $(L,w)$ is of prohenselian degree n for every natural number
$n$ then it is \emph{prohenselian}.
\end{defn}

When does a valuation admit a henselian coarsening? One answer,
Corollary~\ref{henselian:coarsening}, is this: If it lives near a
henselian valuation and is not \emph{antihenselian}:

\begin{prop}\label{antihenselian:conditions}
Let $L^h$ be the henselization of $(L, w)$. The following are
equivalent:
\begin{enumerate}
 \item $L^h$ is the separable closure of $L$.
 \item If $L^*$ is a finite separable extension of $L$ then $w$ has
 $[L^*:L]$ distinct prolongations in $L^*$.
 \item The valuation $w$ is saturated (i.e. $wL$ is divisible and $\LOL$ is
 algebraically closed) and defectless.
\end{enumerate}
If any one of the three conditions is satisfied then $w$ is called
an \emph{antihenselian} valuation.
\end{prop}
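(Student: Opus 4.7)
The plan is to prove (1) $\lIff$ (2) $\lIff$ (3), drawing on three standard tools from valuation theory: the immediateness of the henselization, giving $wL^h = wL$ and $\overline{L^h} = \LOL$; the correspondence between prolongations of $w$ to a finite separable $L^*/L$ and irreducible factors over $L^h$ of a minimal polynomial of a primitive element of $L^*$ (a consequence of Hensel's Lemma); and the fundamental (in)equality $\sum_{i=1}^{g} e_i f_i d_i \leq [L^*:L]$ for such $L^*$, with equality exactly in the defectless case.

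For (1) $\lIff$ (2), let $\alpha \in L^s$ with minimal polynomial $f$ over $L$ of degree $n$, and factor $f = f_1 \cdots f_g$ over $L^h$ into irreducible factors of degrees $n_j$ summing to $n$; the $g$ prolongations of $w$ to $L(\alpha)$ biject with these factors. Hence $g = n$ iff all $n_j = 1$ iff $f$ splits into linear factors over $L^h$ iff $\alpha \in L^h$. So (2) holding for all such $\alpha$ is equivalent to $L^s \subseteq L^h$, i.e., to (1).

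For (2) $\lImplies$ (3): by (1), $L^h = L^s = L^a$ (as $\cha L = 0$), so $L^h$ is algebraically closed, and immediateness then forces $wL$ to be divisible and $\LOL$ to be algebraically closed --- i.e., $w$ is saturated. For each prolongation $w_i$ to a finite $L^*/L$, divisibility of $wL$ together with torsion-freeness of $w_iL^*$ yields $e_i = 1$ (given $\gamma \in w_iL^*$ with $n\gamma \in wL$, divisibility furnishes $\delta \in wL$ with $n\delta = n\gamma$, and torsion-freeness gives $\gamma = \delta$); algebraic closedness of $\LOL$ similarly yields $f_i = 1$. Combining with $g = [L^*:L]$ from (2), the fundamental inequality $\sum_i d_i \leq [L^*:L] = g$ and $d_i \geq 1$ force $d_i = 1$. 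Conversely, for (3) $\lImplies$ (2), saturatedness again forces $e_i = f_i = 1$, and defectlessness converts the fundamental inequality into the equality $g = \sum_i 1 = [L^*:L]$.

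The most delicate ingredient is the identification of prolongations of $w$ with factorizations of the minimal polynomial over $L^h$, which rests on Hensel's Lemma applied inside $L^h$. Once that is granted, the rest of the argument is routine valuation-theoretic bookkeeping with immediateness, divisibility, and the fundamental (in)equality.
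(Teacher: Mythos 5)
Your proof is correct, but it follows a genuinely different route from the paper's. The paper disposes of the proposition by citation plus a ramification-theoretic sketch of (3) $\Rightarrow$ (1): if $\LOL$ is algebraically closed the inertia group equals the decomposition group, if $wL$ is divisible the ramification group equals the inertia group, so inertia and ramification fields collapse to $L^h$, and defectlessness then identifies the ramification field with the separable closure. You instead avoid the decomposition/inertia/ramification machinery entirely and run the full cycle through two Chapter-3-level facts: the bijection between prolongations of $w$ to $L(\alpha)$ and irreducible factors of the minimal polynomial over $L^h$ (giving (1) $\Leftrightarrow$ (2)), and the fundamental inequality together with immediateness of $L^h$ and the torsion-freeness/divisibility bookkeeping that forces $e_i=f_i=1$ (giving (2) $\Leftrightarrow$ (3)). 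The paper's route is shorter once the Galois-theoretic apparatus of ramification theory is available and makes transparent exactly which part of ``saturated'' kills which layer of the extension $L^h \subseteq L^s$; yours is more elementary and self-contained, and, unlike the paper's sketch, actually spells out all implications. Two small imprecisions, neither fatal: the relation $\sum_i d_ie_if_i \leq [L^*:L]$ holds, but for separable $L^*$ it is in fact always an equality (via $L^*\otimes_L L^h \cong \prod_i (L^*)^{h_i}$ and Ostrowski), and defectlessness is the condition that all $d_i=1$, equivalently $\sum_i e_if_i=[L^*:L]$ --- fortunately the inequality in the direction you invoke is valid, so your forcing of $d_i=1$ goes through; and the clause ``$f$ splits into linear factors over $L^h$ iff $\alpha\in L^h$'' is only correct in the direction you need pointwise (splitting implies $\alpha\in L^h$), the converse holding once you quantify over all $\alpha$, which is exactly the form in which you use it.
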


\begin{proof}
This is well-known; see, for example, the first section
of~\cite{engler:1978}. A proof can be quite easily assembled from
various results in~\cite[Section~5]{engler:prestel:2005}. For
example, if $\LOL$ is algebraically closed then the inertia group
equals to the decomposition group and if $wL$ is divisible then
the ramification group equals to the inertia group, hence the
inertia field and the ramification field all equal to $L^h$. Since
$w$ is defectless, the ramification field is the separable closure
of $L$.
\end{proof}

Let $\OO_1$ and $\OO_2$ be two valuation rings of the field $L$.
we say that $\OO_1$ and $\OO_2$ are \emph{dependent} if the
smallest subring $\OO_1\OO_2$ of $L$ that contains both $\OO_1$
and $\OO_2$ is a proper subring of $L$.

\begin{thm}[F. K. Schmidt]\label{henselian:dependent:every}
Let $\OO_1$ and $\OO_2$ be two henselian valuation rings of the
field $L$. If $L$ is not separably closed, then $\OO_1$ and
$\OO_2$ are dependent.
\end{thm}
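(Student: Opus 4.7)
My plan is to argue by contradiction: assume $\OO_1$ and $\OO_2$ are independent henselian valuation rings of $L$ and that $L$ is not separably closed, and derive a contradiction. The workhorse is the classical approximation theorem for independent valuations: since $\OO_1 \OO_2 = L$, for any targets $c_1,c_2 \in L$ and any prescribed values $\gamma_i \in w_i L$ there exists $a \in L$ with $w_i(a-c_i) > \gamma_i$ for $i=1,2$ simultaneously. Applied coefficient by coefficient, this lets me prescribe a single polynomial $g \in L[X]$ with prescribed reductions mod $\MM_1$ and mod $\MM_2$.

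The strategy is to build a monic $g \in L[X]$ of degree $n \geq 2$ that is visibly irreducible over $L$ from the $\OO_2$ side, yet whose reduction mod $\MM_1$ has a simple root in the residue field. Then Hensel's Lemma applied to $\OO_1$ lifts that simple root to a root of $g$ in $L$, contradicting irreducibility. By Proposition~\ref{antihenselian:conditions}, since $L$ is not separably closed and each $\OO_i$ is henselian, neither $\OO_i$ can be antihenselian: otherwise $L$ would equal its separable closure. So $\OO_2$ exhibits at least one of the following obstructions: its residue field $\K$ is not algebraically closed, or its value group $w_2 L$ is not divisible, or $w_2$ has nontrivial defect.

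I would then treat these cases in turn. \textbf{(i)} If $\K$ is not algebraically closed, pick an irreducible $\bar h \in \K[X]$ of degree $n \geq 2$, lift it coefficient-wise by approximation to some $g \in \OO_2[X]$, and at the same time arrange the $w_1$-reduction of $g$ to be, say, $X^{n-1}(X-1)$, which has a simple root at $1$. A standard Gauss-type argument using that $g$ is monic shows $g$ is irreducible over $L$, while $w_1$-Hensel produces a root of $g$ in $L$. \textbf{(ii)} If $\K$ is algebraically closed but $w_2 L$ fails to be $\ell$-divisible for some prime $\ell$ coprime to the residue characteristics, take $g(X) = X^\ell - c$ with $w_2(c)\notin \ell\cdot w_2 L$ (irreducible over $L$ by a Newton polygon argument under $w_2$) and with $c$ chosen close to $1$ in $w_1$, so $\OO_1$-Hensel extracts the $\ell$th root. \textbf{(iii)} If $\K$ is algebraically closed, $w_2 L$ is divisible, and $w_2$ has defect, one replaces the above by an Artin--Schreier type polynomial $X^p - X - c$, with $c$ chosen via approximation to realize the defect under $w_2$ and to exhibit a simple root under $w_1$.

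The main obstacle will be case \textbf{(iii)}. In wild residue characteristic $p$, the naive polynomial $X^p - c$ is not amenable to Hensel lifting on the $\OO_1$ side, so one must replace it by an Artin--Schreier polynomial, and carefully coordinate the simultaneous approximation so that $w_2$-irreducibility is preserved while the $w_1$-reduction still admits a Hensel-liftable simple root; one may also need to pass through successive finite separable extensions, noting that independence and henselianity propagate via the unique extension. Cases (i) and (ii) are essentially routine once the approximation mechanism is set up, and in each case the contradiction is immediate from the chosen polynomial.
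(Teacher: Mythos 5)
The paper offers no proof of this statement at all: it simply cites Engler--Prestel, Theorem~4.4.1, so your attempt has to stand on its own. Its general spirit is right (weak approximation for the two independent valuations to build a single monic polynomial with prescribed behaviour at both places, then Hensel's Lemma at $\OO_1$ against irreducibility detected at $\OO_2$), and your case (i) is essentially fine. But there is a genuine gap. First, the trichotomy you extract from Proposition~\ref{antihenselian:conditions} does not cover what it must: if the residue field of $\OO_2$ is algebraically closed and the value group $w_2L$ fails to be divisible \emph{only} at the residue characteristic $p$, you are in neither your case (ii), where you require $\ell$ coprime to the residue characteristics, nor your case (iii), where you assume $w_2L$ divisible. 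Second, and more seriously, case (iii) is a wish rather than an argument: defect extensions are precisely the hard part of the theorem, they are in general not generated by Artin--Schreier polynomials, and under the paper's standing hypothesis $\cha L=0$ the polynomial $X^p-X-c$ is not a device for producing defect at all; ``choose $c$ via approximation to realize the defect under $w_2$'' is exactly the step that would need a proof and none is indicated.

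The classical route (the one behind the cited Theorem~4.4.1) avoids your trichotomy entirely, and you should too. Since $L$ is not separably closed, fix one monic separable irreducible $f\in L[X]$ of degree $n\ge 2$. Over the henselian field $(L,\OO_2)$, continuity of roots together with Krasner's lemma shows that every monic $g$ of degree $n$ whose coefficients are sufficiently $w_2$-close to those of $f$ is again irreducible over $L$. By the approximation theorem for independent valuations, choose such a $g$ whose coefficients are simultaneously $w_1$-close to those of a monic degree-$n$ polynomial in $\OO_1[X]$ whose reduction modulo $\MM_1$ has a simple root in the residue field (for instance $(X-1)X^{n-1}$). Hensel's Lemma for $\OO_1$ then gives $g$ a root in $L$, contradicting its irreducibility. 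This single Krasner step is what replaces your three cases and, in particular, disposes of the defect case and of $p$-divisibility of the value group without any separate treatment; your cases (i)--(ii) survive as special instances, but without something like this uniform argument the proof as proposed is incomplete.
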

\begin{proof}
See~\cite[Theorem 4.4.1]{engler:prestel:2005}.
\end{proof}

\begin{prop}\label{algebraic:prolongations}
Suppose that $L^*/L$ is an algebraic extension of fields, $\OO$ is
a valuation ring of $L$, and $\OO_1, \OO_2$ are two prolongations
of $\OO$ in $L^*$. If $\OO_1 \sub \OO_2$, then $\OO_1 = \OO_2$.
\end{prop}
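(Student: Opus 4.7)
The plan is to show that the coarsening of $w_1$ corresponding to the inclusion $\OO_1 \sub \OO_2$ is trivial, by exploiting the fact that value-group extensions along algebraic field extensions are torsion over the base. Let $w_i$ be the valuation associated with $\OO_i$ for $i = 1, 2$, and write $\Gamma_i = w_i L^*$.

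First I would invoke the standard correspondence between valuation overrings of $\OO_1$ in $L^*$ and convex subgroups of $\Gamma_1$: the inclusion $\OO_1 \sub \OO_2$ yields a convex subgroup $\Delta \sub \Gamma_1$ together with a quotient map $\pi \colon \Gamma_1 \fun \Gamma_1/\Delta \cong \Gamma_2$ such that $w_2 = \pi \circ w_1$, with proper inclusion corresponding to $\Delta$ being nontrivial. So it suffices to prove $\Delta = \{0\}$.

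Two observations then do the work. On one hand, since both $w_1$ and $w_2$ restrict to $v$ on $L$, the map $\pi$ acts as the identity on the subgroup $vL \sub \Gamma_1$; hence $\Delta \cap vL = \{0\}$. On the other hand, algebraicity of $L^*/L$ forces $\Gamma_1/vL$ to be a torsion group: given $\alpha \in (L^*)^{\times}$ with $\sum_{j=0}^n a_j \alpha^j = 0$ and $a_j \in L$ not all zero, the vanishing of the sum forces the minimum among the values $v(a_j) + j\, w_1(\alpha)$ to be attained at some two distinct indices $i \neq j$ by the strict ultrametric triangle inequality, and so $(i-j) w_1(\alpha) = v(a_j) - v(a_i) \in vL$.

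Combining the two, if $g \in \Delta$ were nonzero then $n g \in vL$ for some $n \geq 1$, so $n g \in \Delta \cap vL = \{0\}$, contradicting the torsion-freeness of the ordered abelian group $\Gamma_1$. Hence $\Delta = \{0\}$, giving $w_1 = w_2$ and $\OO_1 = \OO_2$. I do not anticipate a serious obstacle; the only substantive ingredient is the torsion argument for $\Gamma_1/vL$, which is a clean consequence of the strict ultrametric triangle inequality, while the coarsening correspondence and the restriction compatibility are routine.
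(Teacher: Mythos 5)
Your argument is correct. The paper itself gives no proof here, only a citation to Engler--Prestel (Lemma 3.2.8), so what you have done is supply the standard self-contained argument behind that reference: identify the overring $\OO_2$ of $\OO_1$ with the coarsening attached to a convex subgroup $\Delta \sub \Gamma_1$, note $\Delta \cap vL = \{0\}$ because both prolongations induce the same valuation ring $\OO$ on $L$, and use algebraicity to see that $\Gamma_1/vL$ is torsion, whence a nonzero $g \in \Delta$ would give $0 \neq ng \in \Delta \cap vL$ for some $n \geq 1$ (torsion-freeness of ordered abelian groups), a contradiction. All three ingredients are sound; the torsion argument via the ultrametric equality of two terms in $\sum_j a_j\alpha^j = 0$ is exactly right. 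The only place worth tightening is the phrase that $\pi$ ``acts as the identity on $vL$'': strictly, $w_2$ takes values in $\Gamma_1/\Delta$, so the precise claim is that $\pi$ is injective on $w_1(L^{\times})$, and the one-line justification is that if $0 < w_1(a) \in \Delta$ for some $a \in L^{\times}$, then $a$ lies in the maximal ideal of $\OO = \OO_1 \cap L$ while $w_2(a) = 0$ forces $a^{-1} \in \OO_2 \cap L = \OO$, which is absurd; this is where the hypothesis that \emph{both} rings prolong the \emph{same} $\OO$ enters, and it deserves to be said explicitly. With that sentence added, your proof is complete and is, in substance, the textbook proof the paper outsources.
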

\begin{proof}
See~\cite[Lemma 3.2.8]{engler:prestel:2005}.
\end{proof}

\begin{thm}\label{composition:henselian}
Let $\OO \sub \OO_1$ be two valuation rings of $L$ with
corresponding maximal ideals $\MM_1 \sub \MM$. Then $\lbar{\OO} =
\OO / \MM_1$ is a valuation ring of the field $\LOL = \OO_1 /
\MM_1$. The composition $(L, \OO)$ is henselian iff both $(L ,
\OO_1)$ and $(\LOL, \lbar{\OO})$ are henselian.
\end{thm}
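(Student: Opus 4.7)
The plan is to treat the three components of the statement in sequence: the preliminary claim about $\lbar{\OO}$, then the two directions of the biconditional. The claim that $\lbar{\OO} = \OO/\MM_1$ is a valuation ring of $\LOL = \OO_1/\MM_1$ is routine from the theory of coarsenings: since $\OO \subseteq \OO_1$, the ideal $\MM_1$ is a prime ideal of $\OO$ contained in $\MM$, and $\OO/\MM_1$ then embeds in $\OO_1/\MM_1 = \LOL$ as a valuation subring whose maximal ideal is $\MM/\MM_1$.

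For the forward direction, assume $(L, \OO)$ is henselian. That $(L, \OO_1)$ is henselian is the classical fact that any coarsening of a henselian valuation is henselian (see, e.g., \cite[\S4.1]{engler:prestel:2005}); it may be derived from the uniqueness-of-prolongation characterization together with the fact that in $L^{\text{sep}}$ the quotient of value groups over that of $L$ is torsion, so that a convex subgroup of the extended value group is determined by its intersection with $wL$. For $(\LOL, \lbar{\OO})$, I would verify henselianity by lifting: given a monic $\bar{f}(X) \in \lbar{\OO}[X]$ and $\bar{a} \in \lbar{\OO}$ with $\bar{f}(\bar{a}) \in \MM/\MM_1$ and $\bar{f}'(\bar{a}) \notin \MM/\MM_1$, choose any lifts $f(X) \in \OO[X]$ and $a \in \OO$. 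The hypotheses lift to $f(a) \in \MM$ and $f'(a) \notin \MM$, so henselianity of $(L, \OO)$ yields $b \in \OO$ with $f(b) = 0$ and $b - a \in \MM$, and the reduction $\bar{b} \in \lbar{\OO}$ is the desired root.

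For the backward direction, I would chain two applications of Hensel's Lemma. Given monic $f(X) \in \OO[X]$ and $a \in \OO$ with $f(a) \in \MM$ and $f'(a) \notin \MM$, first reduce modulo $\MM_1$: in $(\LOL, \lbar{\OO})$ the hypotheses read $\bar{f}(\bar{a}) \in \MM/\MM_1$ and $\bar{f}'(\bar{a}) \notin \MM/\MM_1$, so henselianity of $\lbar{\OO}$ delivers $\bar{c} \in \lbar{\OO}$ with $\bar{f}(\bar{c}) = 0$ and $\bar{c} - \bar{a} \in \MM/\MM_1$. Pick any lift $c \in \OO$; then $f(c) \in \MM_1$ by construction and, since $\MM_1 \subseteq \MM$, also $f'(c) \notin \MM_1$, so $f'(c)$ is a unit of $\OO_1$. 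Henselianity of $(L, \OO_1)$ now yields $b \in \OO_1$ with $f(b) = 0$ and $b - c \in \MM_1$. Since $\MM_1 \subseteq \MM \subseteq \OO$ and $c \in \OO$, we get $b \in \OO$, and $b - a = (b - c) + (c - a) \in \MM_1 + \MM \subseteq \MM$, witnessing henselianity of $(L, \OO)$.

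No serious obstacle is anticipated; this is a textbook composition result. The only point requiring care is the direction of the inclusion $\MM_1 \subseteq \MM$, which controls which of the two ``Hensel hypotheses'' implies which, and dictates the order in which one lifts in the backward direction. The coarsening step of the forward direction is the only place one really needs to invoke an auxiliary characterization of henselianity (uniqueness of prolongations) rather than applying Hensel's Lemma directly, since an arbitrary monic polynomial in $\OO_1[X]$ need not have its coefficients in $\OO$.
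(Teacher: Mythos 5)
Your argument is correct in substance, but note that the paper does not prove this statement at all: its ``proof'' is a citation to \cite[Corollary~4.1.4]{engler:prestel:2005}, so what you have written is essentially a reconstruction of the standard textbook argument rather than an alternative to anything in the paper. Your treatment of $\lbar{\OO}$ being a valuation ring of $\LOL$, of the residue-field direction by lifting a simple residue root, and of the backward direction by first solving in $(\LOL,\lbar{\OO})$ and then in $(L,\OO_1)$ is the usual proof and works. Two points deserve more care than you give them. First, in the backward direction the claim $f'(c)\notin\MM_1$ does not follow from $\MM_1\sub\MM$ alone, since the hypothesis concerns $f'(a)$, not $f'(c)$; you need $c-a\in\MM$ (which you do have, because $\bar c-\bar a\in\MM/\MM_1$ and the preimage of $\MM/\MM_1$ in $\OO$ is $\MM$), whence $f'(c)\equiv f'(a)\not\equiv 0\pmod{\MM}$, or equivalently observe that $\lbar{f}\,'(\bar c)$ is a unit of $\lbar{\OO}$; the same containment $c-a\in\MM$ is also what justifies your final estimate $b-a\in\MM$. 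Second, the coarsening step ($(L,\OO)$ henselian implies $(L,\OO_1)$ henselian) is, as you say, the one place Hensel's Lemma cannot be applied directly; your sketch via uniqueness of prolongations and the torsion of $v^{s}L^{\mathrm{sep}}/vL$ additionally needs the fact that every prolongation of $\OO_1$ to a separable extension contains some prolongation of $\OO$ (obtained, e.g., by composing it with a Chevalley extension of $\lbar{\OO}$ to its residue field), together with Proposition~\ref{algebraic:prolongations} to exclude two comparable distinct prolongations; with that supplement your route is exactly the unique-extension argument that the cited Engler--Prestel proof runs through, so nothing essential is missing.
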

\begin{proof}
See~\cite[Corollary 4.1.4]{engler:prestel:2005}.
\end{proof}

From these facts we easily deduce:

\begin{cor}\label{henselian:coarsening}
Let $\OO$ be a henselian valuation ring of $L$. Then for every
non-antihenselian valuation ring $\OO_1$ of $L$ there is a
henselian coarsening $\hat{\OO}_1$ of $\OO_1$.
\end{cor}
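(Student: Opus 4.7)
The plan is to take the subring $\hat{\OO}_1 := \OO \cdot \OO_1$ of $L$ generated by $\OO$ and $\OO_1$ as the candidate coarsening. Since $\hat{\OO}_1$ is an overring of the valuation ring $\OO$, it is either all of $L$ or itself a valuation ring of $L$. In the latter case it coarsens both $\OO$ and $\OO_1$, and Theorem~\ref{composition:henselian} applied to $\OO \sub \hat{\OO}_1$ immediately gives that $\hat{\OO}_1$ is henselian from henselianity of $\OO$. So the whole task reduces to ruling out $\OO \cdot \OO_1 = L$.

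For this I would pass to the henselization $(L^h, \OO_1^h)$ of $(L, \OO_1)$. The extension $L^h/L$ is separable algebraic, and since $\OO$ is henselian it has a (unique) henselian prolongation $\OO^*$ on $L^h$. Non-antihenselianity of $\OO_1$, via condition~(1) of Proposition~\ref{antihenselian:conditions}, says that $L^h$ is strictly contained in the separable closure of $L$; and since $L^h$ is itself separable over $L$, were $L^h$ separably closed it would coincide with the separable closure of $L$, a contradiction. So $L^h$ is not separably closed, and Schmidt's theorem (Theorem~\ref{henselian:dependent:every}) implies that the two henselian valuation rings $\OO^*$ and $\OO_1^h$ of $L^h$ are dependent. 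Thus $\OO^* \cdot \OO_1^h$ is a proper valuation ring of $L^h$. Its intersection with $L$ contains both $\OO = \OO^* \cap L$ and $\OO_1 = \OO_1^h \cap L$, hence contains $\OO \cdot \OO_1$. Moreover, any valuation ring of $L^h$ that contains $L$ must equal $L^h$, because $L^h/L$ is algebraic and valuation rings are integrally closed; therefore $\OO^* \cdot \OO_1^h \cap L$ is a proper subring of $L$, forcing $\OO \cdot \OO_1 \neq L$ as required.

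I expect the conceptual heart, and hence the main obstacle, to be locating exactly which hypothesis makes Schmidt's theorem apply on $L^h$: it is the passage through the henselization that converts the condition ``$\OO_1$ is non-antihenselian'' (which is a statement about prolongations or the henselization of $\OO_1$ itself) into the ``not separably closed'' hypothesis of Schmidt's theorem. The rest is routine manipulation with the cited composition and prolongation facts; Proposition~\ref{algebraic:prolongations} is used along the way to keep track of how $\OO^*$ and $\OO_1^h$ restrict back to $L$.
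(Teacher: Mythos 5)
Your proof is correct and follows essentially the same route as the paper: pass to the henselization of $(L,\OO_1)$, use non-antihenselianity to see it is not separably closed, apply Schmidt's theorem to the henselian prolongation of $\OO$ and to $\OO_1^h$ to get a proper join upstairs, descend to $L$, and invoke Theorem~\ref{composition:henselian}. The only cosmetic differences are that you name the coarsening $\OO\cdot\OO_1$ rather than the intersection of the upstairs join with $L$, and you justify properness over $L$ via integral closedness of valuation rings instead of citing Proposition~\ref{algebraic:prolongations}.
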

\begin{proof}
If $\OO_1$ is henselian then we are done. So assume that $\OO_1$
is not henselian. Since $\OO_1$ is not antihenselian, the
henselization $L^h(\OO_1)$ of $L$ with respect to $\OO_1$ is not
separably closed. Let $\OO_1^h$ be a henselian prolongation of
$\OO_1$ in $L^h(\OO_1)$. Note that such a prolongation may not be
unique. Since $\OO$ is henselian, the unique prolongation $\OO'$
of $\OO$ in $L^h(\OO_1)$ is also henselian. So there is a
valuation ring $\OO'_2$ of $L^h(\OO_1)$ that contains both $\OO'$
and $\OO_1^h$. By Theorem~\ref{composition:henselian} $\OO'_2$ is
henselian. Let $\OO_2 = \OO'_2 \cap L$. By
Proposition~\ref{algebraic:prolongations} $\OO_2$ is a proper
subring of $L$. Since $\OO \sub \OO_2$,  $\OO_2$ is henselian by
Theorem~\ref{composition:henselian} again and contains $\OO_1$, as
desired.
\end{proof}

\begin{rem}
That a field carries a henselian valuation is not a first-order
property in the language $\mathcal{L}_R$. Consider the example
in~\cite[p. 338]{Prestel:Ziegler;1978}. There an inverse limit $L$
of valued fields is constructed such that
\begin{itemize}
  \item $L$ is neither algebraically closed nor real closed,
  \item $L$ is elementarily equivalent to a henselian valued field
  with respect to $\mathcal{L}_R$,
  \item no valuation of $L$ is henselian.
\end{itemize}
\end{rem}

A subgroup $H$ of an ordered abelian group $G$ is \emph{convex}
if, for every $a \in G$, $0 \leq a \leq b$ for some $b \in H$
implies $a \in H$. Obviously the set of all convex subgroups of
$G$ are linearly ordered by inclusion. The order type of this set
is called the \emph{rank} of $G$, denoted by $\rk G$. If $\rk G$
is finite then we identify it with a natural number. For example,
$\rk G = 0$ if and only if $G = \set{0}$. Groups of rank 1, that
is, groups with only one proper convex subgroup $\set{0}$, are of
particular importance for Step~3 in Section~\ref{section:prelim},
because of the following well-known fact:

\begin{fact}
Let $(L, w)$ be a valued field. If the value group $wL$ is of rank
1 then $L$ is dense in the henselization $L^h$ (with respect to
the valuation topology).
\end{fact}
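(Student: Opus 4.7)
The plan is to factor the density assertion through the metric completion of $L$. Since $wL$ has rank $1$, it embeds as an ordered subgroup of $(\R,+)$, so the valuation topology on $L$ is given by a non-archimedean absolute value $|x| = c^{w(x)}$ for a fixed $c \in (0,1)$, and one may form the metric completion $(\hat L, \hat w)$, in which $\hat w$ is the unique (still rank~$1$) prolongation of $w$ and $L$ sits as a dense subfield by construction.

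The crux is to verify that $(\hat L, \hat w)$ is henselian. This is the classical theorem that a complete rank~$1$ valued field satisfies Hensel's Lemma: given a polynomial $f$ over the valuation ring of $\hat L$ and a point $a$ there with $\hat w(f(a)) > 2\hat w(f'(a))$, the Newton iterates $a_{n+1} = a_n - f(a_n)/f'(a_n)$ form a $\hat w$-Cauchy sequence, whose limit in $\hat L$ is the desired root near $a$. Completeness is essential here, and rank~$1$ is what lets one cash in convergence of a Cauchy sequence in terms of a single metric; see, e.g.,~\cite{engler:prestel:2005}.

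Granting this, the universal property of the henselization supplies a unique valued-field embedding $L^h \hookrightarrow \hat L$ fixing $L$ pointwise. Under this embedding the valuation topology of $L^h$ agrees with the restriction of the $\hat w$-topology on $\hat L$, since both come from the unique prolongation of $w$ to $\bar L$. Because $L$ is dense in $\hat L$ and $L \sub L^h \sub \hat L$, every $a \in L^h$ is a $\hat w$-limit of a sequence from $L$, which is precisely the desired density. The main obstacle is therefore only the classical first step; once henselianity of the completion is in hand, the universal-property argument is automatic, and rank~$1$ enters decisively only via the existence of the metric completion.
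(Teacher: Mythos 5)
Your proposal is correct, and it is essentially the standard argument the paper itself relies on: the paper states this Fact without proof, pointing to the discussion in~\cite[p.~53]{engler:prestel:2005}, which proceeds exactly as you do (rank~1 gives an absolute value, the completion $\hat L$ is henselian by Newton iteration, and the universal property of the henselization embeds $L^h$ into $\hat L$ over $L$, whence density). The only cosmetic slip is your justification that the topologies agree ``since both come from the unique prolongation of $w$ to $\bar L$''---prolongations to $\bar L$ need not be unique; the correct reason is simply that the embedding $L^h \hookrightarrow \hat L$ is a valued-field embedding with the same value group, so the valuation topology on $L^h$ is the subspace topology.
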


The following characterization of ordered abelian groups of rank 1
has already been mentioned in passing above:

\begin{prop}
A group $G$ is of rank 1 if and only if it is order-isomorphic to
a non-trivial subgroup of the (canonically) ordered additive
subgroup of the reals.
\end{prop}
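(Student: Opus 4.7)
The plan is to reduce the claim to Hölder's classical embedding theorem via the Archimedean property. I would first show that a nontrivial ordered abelian group $G$ satisfies $\rk G = 1$ if and only if it is Archimedean, meaning that for any $a,b \in G$ with $a > 0$ there is $n \in \N$ with $na > b$. Then the forward direction of the proposition follows from Hölder's embedding of any Archimedean group into $(\R,+,<)$, while the reverse direction is immediate from the Archimedeanity of $\R$.

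For the equivalence with Archimedeanity, one direction is that if $G$ is Archimedean and $\set{0} \neq H \leq G$ is convex, then picking $0 < h \in H$ we see that every $g \in G$ satisfies $|g| < nh$ for some $n$, so $g \in H$ by convexity, forcing $H = G$. For the other direction, if $\rk G = 1$ and $0 < a \in G$, then $H_a = \set{g \in G : |g| < na \text{ for some } n \in \N}$ is easily checked to be a nonzero convex subgroup, hence equals $G$, so $G$ is Archimedean.

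For the embedding, I would carry out Hölder's construction: fix $0 < a \in G$ and define $\varphi \colon G \fun \R$ by
\[
\varphi(g) = \sup \set{m/n \in \Q : n > 0 \text{ and } ma \leq ng},
\]
which is well-defined (and finite) by Archimedeanity; it is order-preserving, satisfies $\varphi(a) = 1$, and hence is injective (again by Archimedeanity). The main (but standard) computation is verifying additivity of $\varphi$ from the defining inequalities. The reverse direction of the proposition is essentially trivial: any nonzero convex subgroup of a nontrivial subgroup of $\R$ must be the whole group by the Archimedeanity of $\R$. The only substantive step is therefore Hölder's embedding, and the principal (if routine) obstacle is proving that $\varphi$ is a group homomorphism.
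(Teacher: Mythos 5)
Your proposal is correct. The paper itself offers no argument here: its ``proof'' is a citation to Engler and Prestel (Proposition 2.1.1 of their book), and the argument you give is exactly the classical one behind that reference. Your reduction is sound on both sides: with the paper's convention that rank $1$ means the only proper convex subgroup is $\set{0}$, your set $H_a=\set{g : |g|<na \text{ for some } n}$ is indeed a nonzero convex subgroup, so rank $1$ forces it to be all of $G$, giving Archimedeanity; conversely a positive element of any nonzero convex subgroup absorbs every element of an Archimedean group, so the group has exactly the two convex subgroups $\set{0}$ and $G$. The Hölder map $\varphi(g)=\sup\set{m/n : ma\le ng}$ is well defined and finite precisely because of Archimedeanity (the defining set is nonempty and bounded above), $\varphi(a)=1$, and strict order preservation (hence injectivity) again uses Archimedeanity to separate $g<h$ by a multiple of $a$. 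The one step you leave unverified, additivity of $\varphi$, is the standard Hölder computation and does go through; if you wanted a self-contained writeup you would need to include it, but as a proof strategy nothing is missing. The trivial direction, that a nontrivial subgroup of $\R$ has rank $1$, is handled correctly via the Archimedeanity of $\R$. In short, you have supplied in full the proof that the paper outsources to the literature.
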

\begin{proof}
See~\cite[Proposition 2.1.1]{engler:prestel:2005}.
\end{proof}

For the rest of this section let $(K, v)$ be a valued field and
$\OO_v, \MM_v$ its valuation ring and maximal ideal, respectively.

\subsection{The residue characteristic is zero}

Throughout this subsection we assume that $\cha(\K) = 0$, $\Th(K)$
admits QE in $\LL_{Mac}$, and $(K,v)$ is of prohenselian degree 2.
We shall first consider $(K, v)$ as a structure of $\LL_{Mac}$
where, unlike in \p-fields, each predicate $P_n$ is interpreted
naturally as the subgroup of $n$th powers of $K^{\times}$. We do
not assume that $K$ satisfies these other defining conditions for
a \p-adically closed field because they are immaterial to the
discussion below. We shall prove:

\begin{thm}\label{mac:qe:henselian}
Under these conditions, the valuation $v$ is henselian.
\end{thm}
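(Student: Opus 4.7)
The plan is to execute the three-step approximation technique described in Section~\ref{section:prelim}. Assume toward a contradiction that $v$ is not henselian.

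For Step~1, I verify that every quantifier-free $\LL_{Mac}$-formula other than a polynomial equation defines an open set in the valuation topology on $(K^{\times})^{m}$. Openness of the ring literals (apart from equations) and of the atoms $\OO(t)$, $\neg\OO(t)$ is routine. The main point is that $P_n$ is open in $K^{\times}$ for every $n>1$; its complement is then also open, since $P_n$ is a subgroup of $K^{\times}$. Given $a \in P_n$, writing $a = b^n$ reduces this to showing $1 + \epsilon \in P_n$ whenever $v(\epsilon)$ is sufficiently large. For this I would apply the prohenselian hypothesis to $R = \Q(\epsilon)^{K} \cap \OO_{v}$, obtaining a henselian coarsening $\hat{R}$; since $\cha(\K) = 0$ the integer $n$ is a unit in $\hat{R}$, so when $v(\epsilon)$ is large enough (larger than the value of the maximal ideal of $\hat{R}$), Hensel's Lemma over $\hat{R}$ yields a root of $X^n - (1 + \epsilon)$ in $\hat{R} \subseteq K$. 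Note that ``$\varphi$ defines an open set'' is an $\LL_{Mac}$-sentence, as recorded in Section~\ref{section:prelim}, and thus transfers to any elementarily equivalent valued field.

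Step~2 is then carried out as in~\cite[Theorem~1]{MMV83}. The failure of henselianity produces a counterexample $F(X, \bar{a}) \in \OO_{v}[X]$ of some fixed degree to a version of Hensel's Lemma; the set of coefficient tuples of all such counterexamples is definable, and by QE equivalent to a quantifier-free formula $\varphi$. Rewriting $\varphi$ in disjunctive normal form and using Step~1, some disjunct $\varphi_0$ lacks equational conditions and hence defines a nonempty open set $U \subseteq (K^{\times})^{n}$, which we may assume contains $\bar{a}$.

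For Step~3, the idea is to approximate a root $r \in K^h$ of $F(X, \bar{a})$ by some $r' \in K$, and the coefficients $\bar{b}$ of the cofactor $F^{*}(X, \bar{b}) = F(X, \bar{a})/(X + r) \in K^h[X]$ by $\bar{b}' \in K$; setting $F(X, \bar{a}') = (X + r') F^{*}(X, \bar{b}')$ with the approximations sufficiently fine yields $\bar{a}' \in U$ while $F(X, \bar{a}')$ has the root $-r' \in K$, contradicting the choice of $U$. This requires $K$ to be dense in $K^h$, which need not hold. To bypass this I would invoke the Omitting Types Theorem as in Section~\ref{section:prelim}, constructing $(L,w) \equiv (K,v)$ in $\LL_{Mac}$ with $wL$ of rank~$1$; such $L$ is then dense in $L^h$ by the cited fact. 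All the $\LL_{Mac}$-properties used so far (QE, residue characteristic zero, the existence of a Hensel counterexample, and the openness conclusion from Step~1) transfer to $(L, w)$, so the approximation contradiction can be produced in $L$.

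The main obstacle I foresee lies in Step~1, where the full prohenselian degree~$2$ hypothesis must be used. The reduction sketched above for openness of $P_n$ uses only degree~$1$; the stronger degree~$2$ condition is presumably needed either to handle the openness of other simple formulas whose parameters involve two independent scales $t_1 \ll t_2$ in $\MM_{v}$, or to ensure that the open set $U$ in Step~2 is genuinely open when the coefficients of $F$ themselves sit at incomparable valuations. In either case one has to produce a single henselian coarsening of $\Q(t_1, t_2)^{K} \cap \OO_{v}$ that is simultaneously compatible with both scales, and then combine this with Corollary~\ref{henselian:coarsening} and Theorem~\ref{composition:henselian} to obtain a uniform openness statement that will survive passage to the rank~$1$ model $(L, w)$ of Step~3. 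Making this juggling of coarsenings precise is where the technical weight of the proof lies.
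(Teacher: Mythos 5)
Your Steps~1 and~2 match the paper: openness of $P_n$ is exactly Lemma~\ref{carries:henselian:clopen}, and it indeed only needs prohenselian degree~1 applied to $\Q(t)^K$. The genuine gap is in Step~3. You write that you would ``invoke the Omitting Types Theorem'' to produce $(L,w)\equiv(K,v)$ with $wL$ of rank~1, but the Omitting Types Theorem only applies after one proves that the relevant type is non-isolated, namely the 2-type~(\ref{the:2:type}) whose realizations are pairs of incomparable elements. Proving non-isolation is the technical heart of the paper's proof and you do not address it: one assumes an isolating formula $\pi(X,Y)$, uses QE to put it in the quantifier-free normal form~(\ref{mac:qf:normal:form}), shows via Lemma~\ref{no:equation} that no equation can occur (since $F(r,t)=0$ is impossible for $r\ll t$), and then — this is Lemma~\ref{powers:moved} and Lemma~\ref{mac:type:not:isolated} — shows that the power-predicate literals satisfied by the incomparable pair $(r,t)$ are also satisfied by the \emph{comparable} pair $(rt^u, t^{ku+1})$ for suitable $u$ and large $k$, so $\pi$ cannot imply $\Phi$. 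Without this argument there is no reason an elementarily equivalent rank-1 model exists at all; indeed, if the type were isolated, every model of $\Th(K)$ would realize it.

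This also corrects your diagnosis of where prohenselian degree~2 enters. It is not needed for openness of simple formulas or of the coefficient set $U$ in Step~2; it is used precisely in the non-isolation argument: to move from $(r,t)$ to a comparable pair one applies Hensel's Lemma inside $\Q(r,t)^K$ under a henselian coarsening of the restriction of $v$, and the existence of such a coarsening for the two-generator field $\Q(r,t)^K$ with $r\ll t$ is exactly the degree-2 hypothesis (cf.\ the use of~(\ref{summation:power}) and~(\ref{equation:powers:remainder}) in Lemma~\ref{powers:moved}). So your proposal is structurally on the right track (same three-step scheme, same use of omitting types to reach a rank-1 model, then density of $L$ in $L^h$), but it is missing the key lemma that makes the omitting-types step legitimate, and the speculative final paragraph points the degree-2 hypothesis at the wrong step.
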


Step~1 in Section~\ref{section:prelim} can be carried out easily
for $(K, v)$.
\begin{lem}\label{carries:henselian:clopen}
For every $n > 1$ the subgroup $P_n$ of $K^{\times}$ is clopen in
the valuation topology induced by $v$.
\end{lem}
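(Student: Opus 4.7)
The plan is to show that the open neighborhood $1 + \MM_v$ of $1$ in $K^{\times}$ is contained in $P_n$. Once this is done, $P_n$ is open at $1$; since $P_n$ is a subgroup and multiplication is continuous, $P_n$ is open at every point, and its complement, being a union of cosets of the open subgroup $P_n$, is also open. So $P_n$ is clopen.

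First I would observe that because $\cha(\K) = 0$, every prime $p$ is a unit in $\OO_v$, so $v$ is trivial on $\Q$. A standard computation with minimal polynomials (comparing valuations of the constant and highest-degree terms) shows that any extension of a trivial valuation to an algebraic extension is again trivial; hence $v$ is trivial on $\Q^K$, the algebraic closure of $\Q$ in $K$. In particular every nonzero element of $\MM_v$ is transcendental over $\Q$.

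Now pick any $a \in 1 + \MM_v$; the case $a = 1$ is immediate, so assume $t := a - 1$ is a nonzero element of $\MM_v$. Set $F := \Q(t)^K$. Since $t$ is transcendental over $\Q$ and $v$ is trivial on $\Q$, the restriction of $v$ to $\Q(t)$ is the $t$\nobreakdash-adic valuation with value group $\Z \cdot v(t)$; because $F/\Q(t)$ is algebraic, the value group of $v_F := v \res F$ is contained in the divisible hull $\Q \cdot v(t)$, so it has rank $1$. By the prohenselian degree hypothesis applied with $m=1$, $F \cap \OO_v$ admits a henselian coarsening inside $F$. But a rank $1$ ordered abelian group has no proper nontrivial convex subgroup, so the only coarsening of $v_F$ that still yields a proper valuation ring of $F$ is $v_F$ itself; hence $v_F$ is already henselian on $F$.

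Finally I would apply Hensel's Lemma inside the henselian valued field $(F, v_F)$, whose residue field again has characteristic $0$ (its valuation ring $F \cap \OO_v$ contains all primes as units). The polynomial $X^n - a$ reduces modulo $F \cap \MM_v$ to $X^n - 1$, which has $X = 1$ as a simple root because $n \neq 0$ in characteristic $0$. Hensel's Lemma lifts this to some $b \in F \subseteq K$ with $b^n = a$, so $a \in P_n$ as desired. The step I expect to be most delicate is the rank-$1$ reduction: the residue-characteristic-$0$ assumption is used precisely to force $t$ transcendental over $\Q$, which in turn collapses the prohenselian-degree condition into outright henselianness of $v_F$. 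Once that is in hand, the rest is just a direct application of Hensel's Lemma in the char-$0$-residue setting.
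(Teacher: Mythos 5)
Your proof is correct and follows essentially the same route as the paper's: apply the prohenselian hypothesis to $\Q(t)^K$ for $t \in \MM_v$, use that its value group has rank $1$ to conclude that the henselian coarsening must be the restriction of $v$ itself, then invoke Hensel's Lemma (residue characteristic $0$) to get $1 + \MM_v \sub P_n$, and conclude clopenness from $P_n$ being an open subgroup of $K^{\times}$. The only difference is that you spell out explicitly the rank-$1$ computation (triviality of $v$ on $\Q$ and hence on $\Q^K$, transcendence of $t$, value group inside the divisible hull of $\Z \cdot v(t)$) which the paper asserts without detail.
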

\begin{proof}
For every $t \in \MM_v$, $t$ is clearly a cofinal element in
$\Q(t)^K$. The restriction of $v$ to $\Q(t)^K$ admits a henselian
coarsening, which must be $v$ itself as $\rk v\Q(t)^K = 1$. Since
$v(t) > v(n) = 0$ for every $n > 1$. So by Hensel's Lemma $1 + t$
is an $n$th power in $\Q(t)^K$, hence in $K$. So $P_n$ contains an
open neighborhood of 1 in $K$ and hence is open in the valuation
topology induced by $v$. It is also closed as it is a subgroup of
$K^{\times}$.
\end{proof}

Next, note that the relation $v(X) \leq v(Y)$ is not
quantifier-free definable in $\LL_{Mac}$. See the discussion
in~\cite[p. 82]{MMV83}. However, since the relation is definable
in $\LL_{Mac}$, we shall use it as a shorthand for the
corresponding formula in $\LL_{Mac}$. To carry out Step~3 we shall
apply the Omitting Types Theorem to achieve the density condition.
Our goal is to show that the following 2-type
\begin{equation}\label{the:2:type}
\Phi(X, Y) = \set{0 < v(X^n) < v(Y) \wedge Y \neq 0 : n \geq 1}
\end{equation}
is not isolated modulo $\Th(K)$. To that end, we suppose for
contradiction that there is a formula $\pi(X, Y)$ in $\LL_{Mac}$
such that
\begin{itemize}
 \item $\ex{X, Y} \pi(X, Y) \in \Th(K)$ and
 \item $\pi(X, Y) \proves \Phi(X, Y)$ modulo $\Th(K)$.
\end{itemize}

Let $r, t \in \MM_v$ such that $r \ll t$ and $K \models \pi(r,
t)$. Since $K$ admits QE in $\LL_{Mac}$, without loss of
generality we may assume that $\pi(X, Y)$ is of the form
\begin{multline}\label{mac:qf:normal:form}
\bigwedge_i E_i(X, Y) = 0 \wedge F(X, Y) \neq 0
\wedge \bigwedge_k \OO(R_k(X, Y))\\
\wedge \bigwedge_m P_{u_m}(T_m(X, Y)) \wedge \bigwedge_n \neg
P_{u_n}(U_n(X, Y)),
\end{multline}
where $E_i, F, R_k, T_m, U_n\in \Z[X, Y]$. Note that $\pi(X, Y)$
does not contain literals of the form $\neg \OO(S(X, Y))$ with $S
\in \Z[X, Y]$.

The following lemma shows that in fact $\pi(X, Y)$ does not
contain equations.

\begin{lem}\label{no:equation}
For any nonzero polynomial $F(X, Y) \in \Z[X, Y]$, $F(r, t) \neq
0$.
\end{lem}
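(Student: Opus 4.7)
The plan is to observe that the structural hypotheses on $\pi$ play no role in this lemma: the conclusion follows purely from $r \ll t$, $\cha(\K) = 0$, and the standard ultrametric principle. Concretely, I would show that $r$ and $t$ are algebraically independent over $\Z$ by tracking valuations of monomials, and the lemma drops out immediately.

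First I would verify that $v(r)$ and $v(t)$ are $\Q$-linearly independent in the value group $vK$. Suppose $p\,v(r) + q\,v(t) = 0$ for integers $p, q$ not both zero. Since $v(r), v(t) > 0$ and $vK$ is torsion-free, both $p$ and $q$ must be nonzero with opposite signs; clearing signs yields $|p|\,v(r) = |q|\,v(t) \geq v(t)$ with $|p| \geq 1$, directly contradicting $r \ll t$.

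Next, since $\cha(\K) = 0$, every nonzero rational integer is a unit in $\OO_v$, so $v(n) = 0$ for all $n \in \Z \mi \set{0}$. Writing $F(X,Y) = \sum_{(i,j)} a_{ij} X^i Y^j$, each term with $a_{ij} \neq 0$ satisfies $v(a_{ij}\, r^i t^j) = i\,v(r) + j\,v(t)$. The $\Q$-linear independence just established forces the map $(i, j) \mapsto i\,v(r) + j\,v(t)$ to be injective on $\N^2$, so distinct nonzero monomials of $F(r,t)$ have pairwise distinct valuations. A sum whose minimum-valuation summand is attained uniquely cannot vanish, so $F(r,t) \neq 0$.

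There is essentially no obstacle here; the main temptation to avoid is being drawn into unpacking the QE-normal form of $\pi$ or invoking the prohenselianity hypothesis of Section~\ref{section:macintyre}, neither of which plays any role at this stage. Those assumptions come into play only in the subsequent steps, when one passes from ``no equations appear in $\pi$'' to the contradiction ruling out the non-isolation of $\Phi$.
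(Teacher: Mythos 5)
Correct, and essentially the paper's own argument: both proofs use $\cha(\K)=0$ to get $v(n)=0$ for nonzero integers and then exploit $r \ll t$ to force the monomials of $F(r,t)$ to have pairwise distinct valuations, so the minimum is attained uniquely and the sum cannot vanish. Your packaging via $\Q$-linear independence of $v(r)$ and $v(t)$ just streamlines the paper's two-stage case analysis (distinct valuations among the summands of each $F_k(r)$, then across powers of $t$, where a collision would make $t$ comparable to $r$) into a single uniform step.
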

\begin{proof}
Suppose for contradiction $F(r, t) = 0$. Write $F(X, Y)$ as
\begin{equation}\label{poly:normal:expansion}
F_n(X)Y^n + \ldots + F_0(X),
\end{equation}
where $F_0(X), \ldots, F_n(X) \in \Z[X]$ are not all zero. If
$F(X, Y)$ is a monomial in $Y$ then it can be written as
\begin{equation}\label{poly:normal:subexpansion}
(e_mX^m + \ldots + e_0)Y^i
\end{equation}
for some $0 \leq i \leq n$, where $e_0, \ldots, e_m \in \Z[X]$ are
not all zero. But no two summands in $e_mr^m + \ldots + e_0$ have
the same valuation, for otherwise we would have $v(r) = 0$. Hence
$v(e_mr^m + \ldots + e_0) < \infty$, contradiction.

So we may assume that $F(X, Y)$ has at least two nonzero monomial
summands. Now for some $i > j \geq 0$ we have $v(F_i(r)t^i) =
v(F_j(r)t^j)$. So
\[
v(t^{i - j}) = v(F_j(r)/F_i(r)).
\]
But again, in each $F_k(r)$, no two summands have the same
valuation, so $F_k(r) \ll t$. So $t \asymp r$ at the largest,
contradiction again.
\end{proof}

Now, the formula $\pi(X, Y)$ can actually be satisfied by elements
in $K$ that are comparable.

\begin{lem}\label{powers:moved}
Suppose that $K \models P_u(E(r, t))$, where $E(X, Y) \in \Z[X,
Y]$ are nonzero. Then for sufficiently large natural number $k$
\[
K \models P_u(E(rt^u, t^{ku+1})).
\]
\end{lem}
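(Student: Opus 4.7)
The plan is to match the dominant valuation monomials of $E(r,t)$ and of $E(rt^u, t^{ku+1})$ and then extract their ratio as a $u$th power. Write $E(X,Y) = \sum c_{ij} X^i Y^j$ with $c_{ij} \in \Z$ not all zero. For $E(r, t) = \sum c_{ij} r^i t^j$, the same incomparability argument as in the proof of Lemma~\ref{no:equation} applies: $r \ll t$ forces $v(t) > N v(r)$ for every $N \in \N$, so any equality $(i-i')v(r) = (j'-j)v(t)$ entails $(i,j) = (i',j')$. Thus the valuations of the nonzero monomials are pairwise distinct. Let $j_*$ be the smallest $j$ with $c_{ij} \neq 0$ for some $i$, and $i_*$ the smallest $i$ with $c_{i j_*} \neq 0$; then $(i_*, j_*)$ uniquely minimizes $iv(r)+jv(t)$, giving
\[
E(r, t) = c_{i_* j_*} r^{i_*} t^{j_*} (1 + \epsilon), \qquad \epsilon \in \MM_v.
\]

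Turning to $E(rt^u, t^{ku+1}) = \sum c_{ij} r^i t^{ui + (ku+1) j}$, the $(i,j)$-monomial has valuation $i v(r) + (ui + (ku+1)j) v(t)$. Since $v(r) + u v(t) > 0$, smaller $i$ is still preferred for fixed $j$; and since $(ku+1) j v(t)$ grows unboundedly in $k$ for $j > j_*$ while the support of $E$ is finite, smaller $j$ wins once $k$ is sufficiently large. Hence for $k$ past a threshold depending only on $E$, $r$, and $t$, the dominant term is again indexed by $(i_*, j_*)$:
\[
E(rt^u, t^{ku+1}) = c_{i_* j_*} r^{i_*} t^{ui_* + (ku+1) j_*} (1 + \epsilon'), \qquad \epsilon' \in \MM_v.
\]

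Dividing, the scalar and the factor $r^{i_*}$ cancel, and the power of $t$ that remains is $ui_* + (ku+1) j_* - j_* = u(i_* + k j_*)$, so
\[
E(rt^u, t^{ku+1}) = E(r, t) \cdot (t^{i_* + k j_*})^u \cdot \frac{1 + \epsilon'}{1 + \epsilon}.
\]
The last factor lies in $1 + \MM_v$, and the argument in the proof of Lemma~\ref{carries:henselian:clopen} shows that every such element is a $u$th power in $K$. Combined with the hypothesis $E(r,t) \in P_u$ and the trivial fact $(t^{i_* + k j_*})^u \in P_u$, the right-hand side is a product of three $u$th powers, so $K \models P_u(E(rt^u, t^{ku+1}))$ as required. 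The only point requiring care is the uniform choice of the threshold for $k$, but this is routine given that the support of $E$ is finite and $(ku+1)v(t)$ grows unboundedly in $k$.
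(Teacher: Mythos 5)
Your proof is correct, and it follows the same basic skeleton as the paper's — isolate the dominant monomial $c_{i_*j_*}r^{i_*}t^{j_*}$ (the paper's $a_0r^dt^e$), observe that passing from $(r,t)$ to $(rt^u,t^{ku+1})$ multiplies it by an exact $u$th power of $t$, and show the remaining unit correction is itself a $u$th power — but the mechanism you use for the unit correction is genuinely different. The paper factors $E$ as the dominant monomial times two brackets of the form $1+(\text{small})$ and applies Hensel's Lemma inside a henselian coarsening $\hat v$ of $v\restriction\Q(r,t)^K$, once before and once after the substitution (the latter for large $k$); this is precisely where the prohenselian degree~$2$ hypothesis is invoked. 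You instead identify the dominant monomial by the pairwise-distinctness of monomial valuations (the Lemma~\ref{no:equation} computation), verify by a direct valuation comparison that the same index $(i_*,j_*)$ dominates after substitution once $k$ exceeds a degree-dependent threshold, and then dispose of the single unit factor $(1+\epsilon')/(1+\epsilon)\in 1+\MM_v$ by quoting the argument of Lemma~\ref{carries:henselian:clopen}, which shows $1+\MM_v\subseteq P_u$ and rests only on prohenselian degree~$1$ (a coarsening of $v\restriction\Q(s)^K$ for a single small $s$). What this buys: for this particular lemma your route is more economical — no two-variable coarsening, no second Hensel application, and the conclusion comes in the sharper form $E(rt^u,t^{ku+1})=E(r,t)\cdot(\text{$u$th power})$, which also immediately handles the $\neg P_{u_n}$ conjuncts needed later in Lemma~\ref{mac:type:not:isolated}. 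The paper's route, by contrast, is the template that gets reused for the $\LL_{Mac,D}$ variation, where the coarsening of $\Q(r,t)^K$ does real work. One phrase of yours is loose but harmless: ``$(ku+1)jv(t)$ grows unboundedly in $k$'' should be read as ``eventually exceeds any fixed integer combination of $v(r)$ and $v(t)$,'' which is all the comparison requires since $v(t)>Nv(r)$ for every $N$.
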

\begin{proof}
Fix a natural number $k$. Write $E(X, Y)$ as
\[
E_0(X)Y^e\left(\frac{E_n(X)}{E_0(X)}Y^n + \ldots +
\frac{E_1(X)}{E_0(X)}Y + 1 \right)
\]
with $E_0(X), E_n(X) \in \Z[X]$ nonzero. Write $E_0(X)$ as
\[
a_0X^d\left(\frac{a_m}{a_0}X^m + \ldots + \frac{a_1}{a_0}X + 1
\right)
\]
with $a_0, a_m \in \Z$ nonzero.

Let $\hat{v}$ be a henselian coarsening of the restriction of $v$
to $\Q(r,t)^K$. Since clearly $\hat{v}(t) > 0$, we see that
actually
\begin{equation}\label{summation:power}
\Q(r,t)^K \models P_u\left(\frac{E_n(r)}{E_0(r)}t^n + \ldots +
\frac{E_1(r)}{E_0(r)}t + 1 \right).
\end{equation}
Similarly we get
\[
\Q(r)^K \models P_u\left( \frac{a_m}{a_0}r^m + \ldots +
\frac{a_1}{a_0}r + 1\right).
\]
So we must have
\begin{equation}\label{equation:powers:remainder}
K \models P_u(a_0r^dt^e).
\end{equation}
Substituting $rt^u, t^{ku+1}$ for $r,t$ respectively we see that
\[
K \models P_u(a_0r^dt^{ud}t^{kue}t^e).
\]
Applying Hensel's Lemma in $\Q(r,t)^K$ when $k$ is sufficiently
large we deduce that $K \models P_u(E(rt^u, t^{ku+1}))$.
\end{proof}

\begin{lem}\label{mac:type:not:isolated}
Let $u = \prod_m u_m \prod_n u_n$. For sufficiently large natural
number $k$, $K \models \pi(rt^u, t^{ku+1})$. Hence $\pi(X, Y)$
cannot isolate the type $\Phi(X, Y)$ modulo $\Th(K)$.
\end{lem}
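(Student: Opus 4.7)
The plan is to verify $K \models \pi(rt^u, t^{ku+1})$ for $k$ sufficiently large and then to exhibit a single instance of $\Phi$ that fails at this point; since $\pi \proves \Phi$ modulo $\Th(K)$ by assumption, this yields the contradiction which rules out isolation. By Lemma~\ref{no:equation} we may assume $\pi$ contains no equational conjuncts, so only the inequation $F \neq 0$, the integrality atoms $\OO(R_k)$, and the positive and negative power predicates need to be checked at $(rt^u, t^{ku+1})$. Integrality is automatic since $rt^u, t^{ku+1} \in \MM_v$. For the inequation, expand $F(rt^u, t^{ku+1}) = \sum_{i,j} f_{ij}\, r^j\, t^{uj + i(ku+1)}$; because $\gcd(u, ku+1) = 1$, the $t$-exponents are pairwise distinct across nonzero monomials once $k$ exceeds the $X$-degree of $F$, and since $r \ll t$ the shifts $j v(r)$ are infinitesimal next to any nonzero integer multiple of $v(t)$, so the $v$-valuations of the nonzero monomials are pairwise distinct and the sum is nonzero.

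The core of the argument is handling the power predicates, for which I would replay the computation in the proof of Lemma~\ref{powers:moved} with $u_m$ (or $u_n$) in place of its exponent $u$. Factor $T_m(X,Y) = E_0(X) Y^e (\cdots)$ and $E_0(X) = a_0 X^d (\cdots)$ as in that lemma, and substitute $(rt^u, t^{ku+1})$. In a henselian coarsening $\hat v$ of $v \res \Q(r,t)^K$, which exists by prohenselian degree $2$, the two parenthesized factors have the form $1 + \epsilon$ with $\hat v(\epsilon)$ arbitrarily large for $k$ large, so Hensel's lemma makes them $u_m$-th powers. The surviving monomial $a_0 r^d t^{e + u(d + ke)}$ splits as $(a_0 r^d t^e) \cdot t^{u(d+ke)}$; the first factor is a $u_m$-th power by the analogue of~(\ref{equation:powers:remainder}) with exponent $u_m$, and the second is a $u_m$-th power because $u_m \mid u$. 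The same identity, read as an equivalence, disposes of the negative conjuncts: $U_n(rt^u, t^{ku+1}) \in P_{u_n}$ iff the residual monomial $a_0 r^d t^e$ attached to $U_n$ lies in $P_{u_n}$ iff $U_n(r,t) \in P_{u_n}$, which fails by hypothesis.

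Finally, taking $n = k+1$ in the definition~(\ref{the:2:type}) of $\Phi$, a direct computation gives $v((rt^u)^{k+1}) - v(t^{ku+1}) = (k+1)\, v(r) + (u-1)\, v(t) > 0$ because $u \geq 2$, so the instance $v(X^{k+1}) < v(Y)$ of $\Phi$ fails at $(rt^u, t^{ku+1})$, contradicting $\pi \proves \Phi$. The most delicate point, and what forces the uniform choice $u = \prod_m u_m \prod_n u_n$, is arranging the simultaneous divisibility of the substituted $t$-exponents by every $u_m$ and every $u_n$, so that the reduction of each power conjunct to its residual monomial $a_0 r^d t^e$ propagates uniformly across all of them at once.
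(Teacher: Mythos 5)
Your proposal is correct and follows the paper's own proof essentially step for step: it uses Lemma~\ref{no:equation} to discard equations, replays the computation of Lemma~\ref{powers:moved} at the pair $(rt^u, t^{ku+1})$ with the product exponent $u$ so that divisibility of the surviving $t$-power by each $u_m$ and $u_n$ settles all positive and negative power predicates simultaneously (the negative ones via the same identity read as a coset equivalence in the subgroup $P_{u_n}$), and concludes by noting the substituted pair is comparable, so an instance of $\Phi$ fails. Your added details (the distinct $t$-exponents argument for $F \neq 0$ and the explicit instance $n = k+1$ of $\Phi$) merely spell out steps the paper dismisses as clear.
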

\begin{proof}
We have seen that $\pi(X, Y)$ does not contain equations. Also, if
$k$ is sufficiently large then clearly the inequality in $\pi(X,
Y)$ is satisfied by $rt^u, t^{ku+1}$. Hence it remains to show
that for infinitely many $k$
\[
K \models \bigwedge_m P_{u_m}(T_m(rt^u, t^{ku+1})) \wedge
\bigwedge_n \neg P_{u_n}(U_n(rt^u, t^{ku+1})).
\]
Now with the current choice of $u$ and a sufficiently large $k$
clearly the argument for the last lemma works for each $u_m$. On
the other hand, if we run that argument for $\neg P_{u_n}(U_n(r,
t))$ then~(\ref{equation:powers:remainder}) turns into
\[
K \models \neg P_{u_n}(a_0r^dt^e).
\]
So it is easy to see that if $k$ is sufficiently large then $K
\models \neg P_{u_n}(U_n(rt^u, t^{ku+1}))$ for each $n$.
\end{proof}

\begin{thm}
There is a valued field $(L, w)$ such that $w$ is of rank 1 and
$(L, w) \equiv (K, v)$ as structures of $\LL_{Mac}$.
\end{thm}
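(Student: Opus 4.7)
The plan is a direct application of the Omitting Types Theorem. The language $\LL_{Mac}$ is countable and $(K,v)$ provides a model of $\Th(K)$, so any type that is not isolated modulo $\Th(K)$ can be omitted in some countably generated model elementarily equivalent to $K$. By Lemma~\ref{mac:type:not:isolated} the type $\Phi(X,Y)$ of~(\ref{the:2:type}) is not isolated modulo $\Th(K)$. Thus Omitting Types delivers a valued field $(L,w) \equiv (K,v)$ as $\LL_{Mac}$-structures in which $\Phi(X,Y)$ is unrealized.

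The remaining task is to translate omission of $\Phi(X,Y)$ into $\rk wL = 1$. I would argue by contrapositive. Suppose $wL$ admits a proper nontrivial convex subgroup $H$. Pick $\alpha \in H$ with $\alpha > 0$ (such $\alpha$ exists since $H \neq \set{0}$), and $\beta \in wL$ with $\beta > 0$ and $\beta \notin H$ (such $\beta$ exists since $H \neq wL$; replace by $-\beta$ if necessary). By convexity, $n\alpha \geq \beta$ for some $n$ would force $\beta \in H$, a contradiction; hence $0 < n\alpha < \beta$ for every $n \geq 1$. Lifting to $a, b \in L$ with $w(a) = \alpha$ and $w(b) = \beta$ exhibits $(a,b)$ as a realization of $\Phi(X,Y)$, contrary to the choice of $L$. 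That $wL \neq \set{0}$ follows from the standing convention that valuation rings are proper, which places $\ex{X} \neg \OO(X)$ into $\Th(K)$ and hence into $\Th(L)$.

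No genuine obstacle remains in this theorem; the substantive work has been absorbed into Lemmas~\ref{no:equation}, \ref{powers:moved}, and \ref{mac:type:not:isolated}, whose combined output is precisely the non-isolation required to invoke Omitting Types. The rank-$1$ model $(L,w)$ produced is exactly what is needed to complete Step~$3$ of Section~\ref{section:prelim} and thereby conclude Theorem~\ref{mac:qe:henselian}.
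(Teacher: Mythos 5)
Your proposal is correct and follows the same route as the paper, whose proof is simply ``Immediate by the Omitting Types Theorem and the last lemma'' (Lemma~\ref{mac:type:not:isolated}). The only difference is that you spell out the routine translation from ``$\Phi(X,Y)$ is omitted'' to ``$\rk wL = 1$'', which the paper leaves implicit; that verification is accurate as written.
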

\begin{proof}
Immediate by the Omitting Types Theorem and the last lemma.
\end{proof}

This shows that Step~3 in Section~\ref{section:prelim} can be
carried out for $(K, v)$.

\subsection{The residue characteristic is nonzero}

Throughout this subsection we assume that $\cha(\K) = p > 0$,
$\Th(K)$ admits QE in $\LL_{Mac}$, and $(K,v)$ is of prohenselian
degree 1. We also assume that $(K,v)$ is \emph{tight}; that is,
$v(p)$ is contained in the smallest nonzero convex subgroup of
$vK$. There is still one more condition for $(K,v)$.

\begin{defn}
Let $\CC$ be a subgroup of $K^{\times}$ such that $\Q^{\times}
\sub \CC$. We say that $\CC$ is \emph{conservative} if
\begin{enumerate}
 \item $p$ is a cofinal element in $\Q(r)^K$ for every $r \in \CC$,
 \item $\CC$ is an existentially closed substructure of $K^{\times}$ over
 $\Q^{\times}$ (that is, with parameters in $\Q^{\times}$)
 with respect to the language $\LL_G$ of groups.
\end{enumerate}
\end{defn}

Let $(L, w)$ be a tight valued field with $\cha(\LOL) = p
> 0$ and $A$ the subfield of algebraic numbers of $L$. Clearly
$\rk wA = 1$. If $(L, w)$ is a \p-adic closed field of \p-rank 1
(or of any \p-rank), then $(A, w)$ is a \p-adic closed field of
\p-rank 1 and, by Macintyre's Theorem, $(A, w)$ is an elementary
substructure of $(L, w)$ with respect to $\LL_{Mac}$. So
$A^{\times}$ is a conservative subgroup of $L^{\times}$. Another
obvious example is when $A$ is a pseudo algebraically closed field
(PAC field), since a field is PAC if and only if it is
existentially closed in every regular extension (with respect to
$\LL_R$, of course). Such valued fields are abundant since every
algebraic extension of a PAC field is PAC. For these and other
basic facts about PAC fields see~\cite[Chapter
11]{fried:jarden:2005}.

Fix a natural number $n$. Suppose that $A^{\times}$ is a
conservative subgroup of $L^{\times}$ and $w$ is a henselian
valuation with $\rk wL > 1$. Now it is actually easy to construct
a valued field $(K, v)$ such that
\begin{itemize}
 \item $\cha(\K) = p > 0$ and $(K, v)$ is tight,
 \item $(K, v)$ is of prohenselian degree n,
 \item there is a conservative subgroup $\CC$ of $K^{\times}$.
\end{itemize}

We start with a subgroup $\CC$ of $A^{\times}$ such that
$\Q^{\times} \sub \CC$ and $\CC$ is an existentially closed
$\LL_G$-substructure of $A^{\times}$ (hence of $L^{\times}$) over
$\Q^{\times}$. Pick an element $t \in L$ with $t \gg p$ and let
$K_0$ be a subfield of $L$ such that $\CC \cup \set{t} \sub K_0$.
Of course the induced valued field $(K_0, w)$ may fail to be of
prehenselian degree $n$. However, since prohenselianity is a sort
of ``closure'' condition for partial henselianity and $(L, w)$ is
henselian, we can simply find a subfield $K_1$ of $L$ such that
\begin{itemize}
 \item $K_0 \sub K_1$,
 \item $\Q(t_1, \ldots, t_n)^L \sub K_1$ for any $t_1, \ldots, t_n \in \MM
 \cap K_0$ with $t_n \gg \ldots \gg t_1$.
\end{itemize}
Then we proceed to find a subfield $K_2$ of $L$ that satisfies the
above two conditions with respect to $K_1$. In this fashion we can
construct a sequence of subfields $K_0, \ldots, K_i, \ldots$ of
$L$ such that $K = \bigcup_i K_i$ is as desired, where the
conservative subgroup in question is $\CC$.

For the rest of this subsection we assume that there is a
conservative subgroup $\CC$ of $K^{\times}$.

\begin{lem}\label{carries:henselian:clopen:mixed:char}
For every $n > 1$ the subgroup $P_n$ of $K^{\times}$ is clopen in
the valuation topology induced by $v$.
\end{lem}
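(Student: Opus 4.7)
The plan is to carry out the analog of Step~1 of Section~\ref{section:prelim} for $(K,v)$, paralleling the proof of Lemma~\ref{carries:henselian:clopen}. Because $P_n$ is a subgroup of $K^{\times}$, it suffices to produce an open neighborhood of $1$ contained in $P_n$: openness at every point of $P_n$ then follows by translation, and closedness is automatic because the complement of $P_n$ in $K^{\times}$ is the union of the open cosets $sP_n$ with $s\notin P_n$. Concretely, I aim to show that $1 + t \in P_n$ whenever $t\in\MM_v$ satisfies $v(t) > 2v(n)$.

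The key structural input is that $v\Q(t)^K$ has rank at most $2$. Every prime $q\neq p$ is a unit in $\OO_v$, so $v\Q(t) = \Z v(t) + \Z v(p)$; passing to the relative algebraic closure only enlarges this to its divisible hull inside $vK$, which still has rank at most $2$. By tightness, $v(p)$, and therefore $v(n)$ for every $n\geq 1$, lies in the smallest nonzero convex subgroup $H_0$ of $vK$. This sets up a clean case distinction.

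If $v(t)\in H_0$, then $v\Q(t)^K$ is archimedean and hence of rank $1$; since a rank~$1$ valuation admits no proper nontrivial coarsening, prohenselian degree~$1$ forces $v\upharpoonright\Q(t)^K$ itself to be henselian, and Hensel's Lemma applied to $X^n-(1+t)$ at $X=1$ yields a root in $\Q(t)^K\subseteq K$ as soon as $v(t) > 2v(n)$. If $v(t)\notin H_0$, then $v\Q(t)^K$ has rank exactly $2$, with unique proper nonzero convex subgroup $H_0\cap v\Q(t)^K$. Prohenselian degree~$1$ supplies a henselian coarsening $\hat v$ of $v\upharpoonright\Q(t)^K$, which by the rank-$2$ classification is either $v\upharpoonright\Q(t)^K$ itself or the archimedean coarsening by $H_0\cap v\Q(t)^K$. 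In the first subcase, the condition $v(t) > 2v(n)$ is automatic (since $v(n)\in H_0$ while $v(t)$ dominates every element of $H_0$), so Hensel applies directly. In the second subcase, $\hat v(n) = 0$ and $\hat v(t) > 0$, so Hensel in the henselian ring $(\Q(t)^K,\hat v)$ again produces $b\in\Q(t)^K$ with $b^n = 1+t$, and $b\in K^{\times}$ because $\hat v(b) = 0$.

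The principal technical point is the rank bound on $v\Q(t)^K$ together with the enumeration of coarsenings in rank $2$; once those are secured, the Hensel estimate $v(t) > 2v(n)$ is classical. It is worth noting that the conservative subgroup $\CC$ plays no role in this lemma itself and is reserved for the mixed characteristic analogs of Lemmas~\ref{no:equation}--\ref{mac:type:not:isolated}.
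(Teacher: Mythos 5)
Your proof is correct and uses essentially the same ingredients as the paper's: prohenselian degree $1$ supplies a henselian coarsening of $v \upharpoonright \Q(t)^K$, tightness places $v(n)$ in the smallest nonzero convex subgroup, and Hensel/Newton's Lemma yields $1+t \in P_n$ for all $t$ with $v(t) > 2v(n)$, giving an open neighborhood of $1$. The only real difference is organizational: the paper splits globally on whether $p$ is cofinal in $K$ (compressing the other case into ``repeat the argument of Lemma~\ref{carries:henselian:clopen}''), while you split pointwise on whether $v(t)$ lies in the smallest convex subgroup and make explicit the rank-$2$ analysis of the possible coarsenings that the paper leaves implicit.
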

\begin{proof}
If there is a $t \in \MM_v$ with $p \ll t$ then we may simply
repeat the argument in Lemma~\ref{carries:henselian:clopen}. If
$\cha(\K) = p > 0$ is a cofinal element in $K$, then for any $n
> 1$ we consider any $t \in \MM_v$ with $v(t) > 2v(n)$. Since $\rk vK = 1$
and $(K, v)$ is of prohenselian degree 1, the restriction of $v$
to $\Q(t)^K$ is henselian. So by Newton's Lemma $1 + t$ is an
$n$th power in $\Q(t)^K$, hence in $K$.
\end{proof}

\begin{lem}\label{t:going:down}
Let $E_i, F_j \in \Z[X]$ and $x \in \MM_v$ with $x \gg p$ such
that
\[
K \models \bigwedge_i P_{u_i}(E_i(x)) \wedge \bigwedge_j \neg
P_{u_j}(F_j(x)).
\]
Let $u = \prod_i u_i \prod_j u_j$. Then for some $x^*$ with $x^*
\asymp p$
\[
K \models \bigwedge_i P_{u_i}(E_i(x^*)) \wedge \bigwedge_j \neg
P_{u_j}(F_j(x^*)).
\]
\end{lem}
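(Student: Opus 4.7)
The plan is to mimic the approach of Lemma~\ref{powers:moved}, now using the conservative subgroup $\CC$ in place of the trick of substituting $rt^u, t^{ku+1}$. The proof splits into a Newton-style reduction of the polynomial conditions to monomial ones, followed by an existential transfer into $\CC$.

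For the reduction, I factor each $E_i(X) \in \Z[X]$ as $c_i X^{e_i}(1 + R_i(X))$, where $c_i X^{e_i}$ is the lowest nonzero monomial of $E_i$. Because $x \gg p$ and $(K,v)$ is tight, every integer appearing in the expansion of $R_i(x)$ has valuation confined to the convex subgroup generated by $v(p)$, while $v(x)$ lies strictly above that subgroup; hence $v(R_i(x))$ far exceeds $2v(u_i)$. Prohenselian degree~1 supplies a henselian coarsening $\hat v$ of $v \res \Q(x)^K \cap \OO$, and Newton's Lemma applied to $Y^{u_i} - (1 + R_i(x))$ in this henselian setting produces a $u_i$-th root of $1 + R_i(x)$ inside $\Q(x)^K \sub K$. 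Consequently $P_{u_i}(E_i(x)) \liff P_{u_i}(c_i x^{e_i})$, and an analogous computation reduces $\neg P_{u_j}(F_j(x))$ to $\neg P_{u_j}(d_j x^{f_j})$ for the analogously defined leading integer $d_j$ and exponent $f_j$ of $F_j$. It thus suffices to find $x^*$ with $x^* \asymp p$ satisfying
\[
\bigwedge_i P_{u_i}(c_i y^{e_i}) \wedge \bigwedge_j \neg P_{u_j}(d_j y^{f_j}).
\]

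These monomial conditions are a Boolean combination of existential formulas in the language $\LL_G$ of groups, with parameters in $\Z \sub \Q^{\times}$. The positive conjunction $\bigwedge_i \exists z_i (z_i^{u_i} = c_i y^{e_i})$ is existential and is realised by $x$ in $K^{\times}$, so clause~(2) of conservativity delivers a realisation $y_0 \in \CC$. To secure the negative conjunction as well, I would strengthen this positive existential formula by adjoining, for each $j$, further existential clauses that $x$ is known to satisfy in $K^{\times}$ and that jointly force $d_j y^{f_j}$ into a non-$u_j$-th-power coset; any realisation $x^{**} \in \CC$ of the strengthened formula then satisfies both halves, and multiplying by a sufficiently large power of $p \in \Q^{\times} \sub \CC$ places the resulting $x^*$ in $\MM_v$ without disturbing any condition (since $p^{u}$ is a $u$-th power). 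Clause~(1) of conservativity asserts that $p$ is cofinal in $\Q(x^*)^K$, which combined with $x^* \in \MM_v$ yields $x^* \asymp p$, as required.

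The main obstacle is this strengthening step: the literal $\neg P_{u_j}(\cdot)$ is universal rather than existential in $\LL_G$, and there is no automatic representative in $\Q^{\times}$ of each nontrivial coset of $(K^{\times})^{u_j}$. The technical heart of the lemma is the careful choice of existential witnesses, drawn from the multiplicative relations that $x$ already satisfies in $K^{\times}$, so that the strengthened existential $\LL_G$-formula with $\Q^{\times}$-parameters both remains realised by $x$ and implies the non-$u_j$-th-power conclusion in any group extending $\Q^{\times}$.
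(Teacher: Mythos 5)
Your first half (reducing each $P_{u_i}(E_i(x))$, $\neg P_{u_j}(F_j(x))$ to a monomial condition $P_{u_i}(c_i x^{e_i})$, $\neg P_{u_j}(d_j x^{f_j})$ via a henselian coarsening of $v\res\Q(x)^K$ and a root extraction) matches the paper's proof. But the proposal then has two genuine gaps. The first you admit yourself: you never actually transfer the \emph{negative} conditions into $\CC$. Your plan of ``strengthening'' the positive existential $\LL_G$-formula by further existential clauses over $\Q^{\times}$ that force $d_j y^{f_j}$ out of the $u_j$-th powers is only a hope --- a non-power coset of $(K^{\times})^{u_j}$ need not be pinned down by existential data over $\Q^{\times}$ that $x$ happens to satisfy --- and you explicitly leave this ``technical heart'' undone. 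The paper does not take that route at all: it invokes the substructure property of the conservative subgroup $\CC$ directly for the full Boolean combination of monomial power conditions (whose only parameters are integers, hence lie in $\Q^{\times}$), obtaining in one step an $x_*\in\CC$ with $K\models\bigwedge_i P_{u_i}(a_i x_*^{m_i})\wedge\bigwedge_j\neg P_{u_j}(a_j x_*^{m_j})$. Whatever one thinks of how tersely the paper justifies that descent, the hypothesis of a conservative subgroup exists precisely to make this step available; leaving it as an open obstacle means the lemma is not proved.

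The second gap is the sentence ``it thus suffices to find $x^*$ with $x^*\asymp p$ satisfying the monomial conditions.'' That is false as stated: the equivalence $P_{u_i}(E_i(y))\liff P_{u_i}(c_i y^{e_i})$ was established only for $y=x\gg p$, where the coarsening could be taken trivial on $\Q$; for an element comparable to $p$ it must be re-proved, and this is where the remaining hypotheses do real work. The paper takes $x^*=p^{ku}x_*$, notes that clause~(1) of conservativity makes $p$ cofinal in $\Q(x_*)^K$, so $\rk v\Q(x_*)^K=1$ and (by prohenselian degree~1) $v\res\Q(x_*)^K$ is itself henselian, and then applies Newton's Lemma again, with $k$ large enough that the valuations of the error terms beat $2v(u_i)$, to recover the \emph{polynomial} conditions $P_{u_i}(E_i(x^*))$, $\neg P_{u_j}(F_j(x^*))$ from the monomial ones (the factor $p^{ku}$ being a $u$-th power, it does not disturb any coset). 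In your write-up the large power of $p$ is used only to land in $\MM_v$, and clause~(1) only to conclude $x^*\asymp p$; you never verify henselianity at $\Q(x_*)^K$ nor re-run the Newton step, so the conclusion of the lemma about $E_i(x^*)$ and $F_j(x^*)$ is not reached.
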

\begin{proof}
Let us begin by considering just one polynomial, say, $E_1(X)$.
Write it as
\begin{equation}\label{poly:one:variabel:normal:form}
a_0X^m \biggl(\frac{a_n}{a_0}X^n + \ldots + \frac{a_{1}}{a_0}X + 1
\biggr),
\end{equation}
where $a_n, \ldots, a_0 \in \Z$, $a_n, a_0 \neq 0$, and $n \geq m
\geq 0$. Let $\hat{v}$ be a henselian coarsening of the
restriction of $v$ to $\Q(x)^K$. Since $x$ is clearly a cofinal
element in $\Q(x)^K$, we may assume that $\hat{v}\Q = 0$. By
Hensel's Lemma we see that
\begin{equation*}
\Q(x)^K \models P_{u_1} \biggl(\frac{a_n}{a_0}x^n + \ldots +
\frac{a_{1}}{a_0}x + 1 \biggr).
\end{equation*}
So we have
\begin{equation}\label{one:variable:div}
K \models P_{u_1}(a_0x^m ).
\end{equation}
It is easy to see that the above argument does not depend on the
number of polynomials under consideration. On the other hand, if
we run the argument for $\neg P_{u_j}(F_j(X))$
then~(\ref{one:variable:div}) turns into
\[
K \models \neg P_{u_j}(a_0x^m ),
\]
for each $j$. So we have
\[
K \models \bigwedge_i P_{u_i}(a_ix^{m_i}) \wedge \bigwedge_j \neg
P_{u_j}(a_jx^{m_j})
\]
for some $a_i, a_j, m_i, m_j \in \Z$. Since $\CC$ is an elementary
$\LL_G$-substructure of $K^{\times}$, there is an $x_* \in \CC$
such that
\[
K \models \bigwedge_i P_{u_i}(a_ix_*^{m_i}) \wedge \bigwedge_j
\neg P_{u_j}(a_jx_*^{m_j}).
\]
Since $p$ is a cofinal element in $\Q(x_*)^K$, we have $\rk
\Q(x_*)^K = 1$ and the restriction of $v$ to $\Q(x_*)^K$ is
henselian. So by Newton's Lemma, for sufficiently large natural
number $k$,
\[
\Q(x_*)^K \models P_{u_i} \biggl(\frac{a_n}{a_0}(p^{ku}x_*)^n +
\ldots + \frac{a_{1}}{a_0}p^{ku}x_* + 1 \biggr).
\]
for each $u_i$, and similarly for each $u_j$. So $x^* = p^{ku}x_*$
for sufficiently large $k$ is as desired.
\end{proof}

Again we use the Omitting Types Theorem to show that Step~3 can be
carried out.

\begin{thm}\label{mac:omitting:type}
There is a valued field $(L, w)$ such that $w$ is of rank 1 and
$(L, w) \equiv (K, v)$ as structures of $\LL_{Mac}$.
\end{thm}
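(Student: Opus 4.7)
The plan is to apply the Omitting Types Theorem to the type
\[
\Phi(Y) = \{\, 0 < v(p^n) < v(Y) \wedge Y \neq 0 : n \geq 1 \,\}
\]
and to its dual $\Phi'(Y) = \{\, 0 < v(Y^n) < v(p) : n \geq 1 \,\}$. Omitting $\Phi$ prevents $wL$ from containing any element that lies in a convex subgroup strictly above $w(p)$, while omitting $\Phi'$ prevents any positive element from lying in a convex subgroup strictly below $w(p)$; together, these force $wL$ to be Archimedean, i.e., of rank $1$.

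To show $\Phi(Y)$ is not isolated modulo $\Th(K)$, suppose some formula $\pi(Y) \in \LL_{Mac}$ isolates it, and fix $t \in K$ with $K \models \pi(t)$; then $t \gg p$. By QE in $\LL_{Mac}$, replace $\pi$ with a disjunct of its quantifier-free disjunctive normal form, of the shape
\[
\bigwedge_i E_i(Y) = 0 \wedge F(Y) \neq 0 \wedge \bigwedge_k \OO(R_k(Y)) \wedge \bigwedge_m P_{u_m}(T_m(Y)) \wedge \bigwedge_n \neg P_{u_n}(U_n(Y)).
\]
By an analogue of Lemma~\ref{no:equation}, no equations survive: an equation $E_i(t) = 0$ would force two monomials $a_i t^i, a_j t^j$ to share a valuation, but the differences $v(a_i) - v(a_j) \in \Z v(p)$ lie in the smallest nonzero convex subgroup of $vK$ by tightness, whereas distinct $v(t^i)$ lie in the strictly higher Archimedean class of $v(t)$. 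The $\OO$-literals with $R_k \in \Z[Y]$ are automatic at $t \in \OO_v$, and $\neg\OO$-literals cannot hold.

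Set $u = \prod_m u_m \prod_n u_n$. By Lemma~\ref{t:going:down} there is $t^* \asymp p$ satisfying all the power and non-power literals; since that proof in fact yields $t^* = p^{ku}x_*$ with $x_* \in \CC$ and $k$ arbitrarily large, and $F$ has only finitely many roots, we may arrange $F(t^*) \neq 0$ as well. Then $K \models \pi(t^*)$, but $v(t^*) \leq N v(p)$ for some $N$, so $t^*$ fails $\Phi(Y)$, contradicting $\pi \vdash \Phi$. Hence $\Phi(Y)$ is non-isolated.

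The main obstacle is upgrading ``$L$ omits $\Phi$'' to ``$wL$ is of rank $1$'', as omitting $\Phi$ alone still permits a nonzero convex subgroup strictly below $w(p)$ in $wL$. We handle this by a symmetric non-isolation argument for $\Phi'(Y)$ in which the roles of $\gg$ and $\ll$ are swapped: Newton's Lemma in a henselian coarsening of $\Q(Y)^K$ together with the existential closure of $\CC$ in $K^{\times}$ over $\Q^{\times}$ again produces a witness $t^* \asymp p$ that contradicts isolation of $\Phi'$. Simultaneously omitting $\Phi$ and $\Phi'$ via the Omitting Types Theorem then yields $(L,w) \equiv (K,v)$ in which every nonzero positive element of $wL$ is Archimedean with $w(p)$, so $wL$ is of rank $1$.
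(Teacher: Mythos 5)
Your proposal is correct in substance but routes the omitting-types step differently from the paper. The paper omits the single 2-type~(\ref{the:2:type}): assuming it is isolated by $\pi(X,Y)$ and realized by $r \ll t$ (so $p \ll t$ by tightness), it passes to the existential formula $\ex{X}\pi(X,Y)$, applies QE to get a one-variable quantifier-free disjunction, and then uses the proof of Lemma~\ref{no:equation} together with Lemma~\ref{t:going:down} to replace $t$ by some $t^*\asymp p$ still satisfying the relevant disjunct, contradicting tightness. You instead omit two 1-types anchored at the closed term $p$, which lets you apply QE directly to the isolating formula $\pi(Y)$ and skip the $\ex{X}$ maneuver; the core computation for your type $\Phi(Y)$ (equations cannot survive because valuations of integer coefficients lie in $\Z v(p)$ while $v(t)$ lies strictly above, then Lemma~\ref{t:going:down} plus choosing $k$ large in $t^*=p^{ku}x_*$ to dodge the finitely many roots of $F$) is essentially the paper's, and your explicit handling of $F(Y)\neq 0$ is more careful than the paper's ``clear from the proof there''. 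The price of anchoring at $p$ is that you must also exclude elements whose value class lies below that of $w(p)$, whence your second type $\Phi'$.

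One caveat: your justification for the non-isolation of $\Phi'$ is off as stated. There is no ``symmetric'' argument to run, because tightness means $\Phi'$ is not realized in $K$ at all: there is no $s$ with $0 < v(s^n) < v(p)$ for every $n$ to move anywhere, and Newton's Lemma and the conservative subgroup $\CC$ play no role here. The fact you need is immediate and requires no machinery: $\Th(K)$ is complete, an isolated type over a complete theory is realized in every model, and $K$ itself omits $\Phi'$ by tightness (if $\Phi'$ is even finitely satisfiable); hence $\Phi'$ is non-isolated and can be omitted simultaneously with $\Phi$ by the countable Omitting Types Theorem. With that one-line replacement your argument goes through, and the final step---omitting both types forces every positive element of $wL$ into the Archimedean class of $w(p)$, so $\rk wL = 1$---is fine, granting the routine observation that $w(p)>0$ and the nontriviality of $w$ transfer to $L$ by elementary equivalence.
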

\begin{proof}
It suffices to omit the 2-type~(\ref{the:2:type}). Suppose for
contradiction it is not omitted. Let $\pi(X, Y)$ be as in the last
subsection and $r, t \in \MM_v$ such that $r \ll t$ and $K \models
\pi(r, t)$. Since $(K, v)$ is tight, clearly $p \ll t$. Consider
the existential formula $\ex{X} \pi(X, Y)$. Since $K$ admits QE in
$\LL_{Mac}$, there is a quantifier-free formula $\bigvee_i
\varphi_i(Y)$ in disjunctive normal form such that
\[
K \models \ex{X} \pi(X, Y) \liff \bigvee_i  \varphi_i(Y).
\]
Without loss of generality $K \models \varphi_1(t)$. Then the
proof of Lemma~\ref{no:equation} shows that $\varphi_1(Y)$ does
not contain equations. By Lemma~\ref{t:going:down} there is a
$t^*$ with $t^* \asymp p$ that satisfies all the literals that
involve $n$th power predicates in $\varphi_1(Y)$. It is also clear
from the proof there that $t^*$ may be chosen so that the
inequality in $\varphi_1(Y)$ is also satisfied by $t^*$. This is a
contradiction since $(K, v)$ is tight.
\end{proof}

\subsection{A variation of the Macintyre language}

There is a quite useful variation $\LL_{Mac, D}$ of the Macintyre
language which uses a function instead of a predicate for the
valuation ring. Let $(L, w)$ be a valued field. Define a
restricted division function $D : L^2 \fun L^2$ by
\[
(x, y) \efun \begin{cases}
               x / y, &\text{ if } w(x) \geq w(y) \text{ and } y \neq
               0;\\
               0      &\text{ otherwise.}
       \end{cases}
\]
The behavior of $D$ can be axiomatized; see the definition
in~\cite[p. 82]{MMV83}, where the binary predicate ``$X$~div~$Y$''
can be expressed as a quantifier-free formula $Y = 0 \vee D(Y, X)
\neq 0$. So the language $\LL_{Mac, D}$ is more expressive than
the language $\LL_{Mac}$.

\begin{lem}
Let $\lbar{X}$ be a tuple of variables. Every conjunction
$\varphi(\lbar{X})$ of literals in $\LL_{Mac, D}$ is equivalent to
a disjunction of formulas of the form:
\begin{multline}\label{mac,D:qf:normal:form}
\bigwedge_j \ex{Y_j}(Y_j \neq 0 \wedge Y_j G_j(\lbar{X}) =
H_j(\lbar{X}) \wedge D(Y_j,1) = Y_j)\\
\wedge \bigwedge_i E_i(\lbar{X}) = 0 \wedge F(\lbar{X}) \neq
0\wedge \bigwedge_m P_{u_m}(D(T_m(\lbar{X}),
S_m(\lbar{X})))\\
\wedge \bigwedge_n \neg P_{u_n}(D(U_n(\lbar{X}),V_n(\lbar{X}))),
\end{multline}
where $G_j, H_j, E_i, F, T_m, S_m, U_n, V_n \in \Z[\lbar{X}]$.
\end{lem}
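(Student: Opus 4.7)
The plan is to induct on the total number of occurrences of the function symbol $D$ in $\varphi(\lbar{X})$, flattening one $D$-subterm at each step and then distributing disjunctions over the outer conjunction to produce a disjunction of target-form conjunctions.

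\emph{Base case.} If no $D$ appears in $\varphi$, every term is a polynomial in $\Z[\lbar{X}]$, so equations and inequations are already in target shape. For a power literal $P_n(T)$ with $T$ polynomial, I would use the identity
\[
P_n(T) \iff P_n(D(T,1)) \vee P_n(D(1,T)),
\]
which holds because $P_n$ is a subgroup of $K^{\times}$ and for every nonzero $T$ at least one of $T, T^{-1}$ lies in $\OO$: if $T \in \OO$ then $D(T,1) = T$, and if $T \notin \OO$ then $D(1,T) = T^{-1}$, the subgroup identity $P_n(T) \liff P_n(T^{-1})$ matching the remaining disjunct. The dual $\neg P_n(T) \iff \neg P_n(D(T,1)) \wedge \neg P_n(D(1,T))$ handles the negation.

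\emph{Inductive step.} Pick an innermost $D$-subterm $D(a,b)$ with $a, b \in \Z[\lbar{X}]$ and split $\varphi$ disjunctively on whether this division is defined. In the defined case ($b \neq 0$ and $w(a) \geq w(b)$), I add the existential literal $\ex{Y}(Y \neq 0 \wedge Yb = a \wedge D(Y,1) = Y)$ and replace $D(a,b)$ by the fresh variable $Y$ in $\varphi$. Should $Y$ then occur inside a further $D$-subterm, the identities $D(c, Y) = D(cb, a)$ and $D(Y, c) = D(a, bc)$ (valid modulo the case conditions already imposed) clear the denominator $Y = a/b$ and bring all $D$-arguments back into $\Z[\lbar{X}]$. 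The $D$-count strictly decreases, so the inductive hypothesis applies. In the undefined case $D(a,b) = 0$, I substitute $0$ for $D(a,b)$ throughout $\varphi$ and record that $a = 0$, $b = 0$, or $a, b \neq 0$ with $w(a) < w(b)$; the first two are polynomial equations already in target form.

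The main obstacle is the remaining subcase $a, b \neq 0$ and $w(a) < w(b)$, since the target form supplies only the positive divisibility $w(G_j) \leq w(H_j)$ via its existential clause, with no direct literal for the strict inequality. I would resolve this by tracking which other literals of $\varphi$ constrain $D(a,b)$: a literal $P_{u_m}(D(a,b))$ forces the defined case and eliminates the subcase; a literal $\neg P_{u_n}(D(a,b))$ is automatically satisfied once $D(a,b) = 0$ and contributes no extra condition; and when the only constraint placed on $D(a,b)$ is the bare equation $D(a,b) = 0$, the strict-inequality subcase can be witnessed by an auxiliary existential $\ex{Y}(Y \neq 0 \wedge Ya = b \wedge D(Y,1) = Y)$ paired with a companion existential for an associated polynomial pair that pins $Y$ into $\MM$ by encoding non-invertibility through the remaining target-form literals. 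Carefully bookkeeping these forced configurations, in the spirit of the normal-form argument of \cite{MMV83}, delivers the required disjunction of target-form conjunctions.
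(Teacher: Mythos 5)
Your overall route is the same as the paper's: a ``routine'' induction that eliminates occurrences of $D$ one at a time by case-splitting on definedness, clearing denominators, and recording the defining conditions through the existential clauses $\ex{Y}(Y \neq 0 \wedge Y G = H \wedge D(Y,1)=Y)$. The base-case identity $P_n(T) \liff P_n(D(T,1)) \vee P_n(D(1,T))$ and the denominator-clearing identities in the defined branch are fine (though for occurrences of $D(a,b)$ nested inside larger polynomial contexts you need the slightly more general move of multiplying both arguments, or both sides of an equation, by a suitable power of $b$, which is harmless since $b\neq 0$ in that branch).

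The genuine gap is exactly where you say ``the main obstacle'' is, and your proposed fix does not close it. In the undefined branch with $a,b\neq 0$ and $w(a)<w(b)$, the side condition that must be carried is a \emph{strict} valuation inequality, while the target form~(\ref{mac,D:qf:normal:form}) offers only: polynomial equations, a single inequation, the existential clauses (which express the \emph{non-strict} condition $w(G_j)\leq w(H_j)$, $H_j\neq 0$), and $\pm P_n$ applied to a single quotient $D(\cdot,\cdot)$. Your ``auxiliary existential'' $\ex{Y}(Y \neq 0 \wedge Ya = b \wedge D(Y,1)=Y)$ asserts only $w(a)\leq w(b)$, and the ``companion existential \ldots that pins $Y$ into $\MM$'' is not something the form~(\ref{mac,D:qf:normal:form}) can say: there is no positive literal available asserting that an element is a non-unit, and in a general valued field (with only integer constants and $P_n$ interpreted as the honest $n$th powers) the negative literals $\neg P_{u_n}(D(U,V))$ do not combine to give $w(a)<w(b)$. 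Moreover, your bookkeeping reduction of the problem to the case where ``the only constraint placed on $D(a,b)$ is the bare equation $D(a,b)=0$'' is mistaken: whenever $D(a,b)$ sits inside a larger term (e.g.\ $D(a,b)+c=0$, or $P_n(D(cD(a,b)+d,e))$), the zero-substitution branch must still be conjoined with the undefinedness condition to keep the disjunction equivalent to $\varphi$, so the strict inequality cannot be argued away by looking at which literals mention $D(a,b)$ directly. For what it is worth, the paper's own proof is only a sketch and its displayed example translates $D(E,F)=0$ using a non-strict divisibility clause, i.e.\ it glosses over precisely this point; so you have correctly located the delicate step, but as written your argument does not supply the missing construction (or a restriction of the statement) that handles it.
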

\begin{proof}
Since the function $D$ behaves as division whenever its output is
not 0, the claim essentially says that the ``denominators'' in the
terms can be cleared when the defining conditions for the
occurrences of $D$ are explicitly stated. For example, if $E(X),
F(X) \in \Z[X]$, then $D(E(X),F(X)) = 0$ is equivalent to
\[
\ex{Y}(Y \neq 0 \wedge Y E(X) = F(X) \wedge D(Y, 1) = Y) \vee F(X)
= 0 \vee E(X) = 0.
\]
It is not hard to see that the claim follows from a routine
induction on how deeply the symbol $D$ is nested in $\varphi$.
\qed
\end{proof}

Under the same conditions, the results in the last two subsections
also hold with respect to $\LL_{Mac, D}$.

\begin{thm}
Suppose that $\Th(K)$ admits QE in $\LL_{Mac, D}$ and
\begin{itemize}
 \item if $\cha(\K) = 0$ then $(K,v)$ is of prohenselian degree 2;
 \item if $\cha(\K) = p > 0$ then $(K,v)$ is of prohenselian degree
 1, $(K, v)$ is tight, and there is a conservative subgroup $\CC$
 of $K^{\times}$.
\end{itemize}
Then the valuation $v$ is henselian.
\end{thm}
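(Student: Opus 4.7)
The plan is to adapt the proofs of Theorem~\ref{mac:qe:henselian} (residue characteristic zero) and Theorem~\ref{mac:omitting:type} (positive residue characteristic) to the richer language $\LL_{Mac, D}$. The three-step MMV strategy of Section~\ref{section:prelim} is unchanged; the preceding lemma serves as the bridge, since every conjunction of $\LL_{Mac, D}$-literals is equivalent to a disjunction of formulas of the normal form~(\ref{mac,D:qf:normal:form}). Thus throughout the argument one may simply replace the $\LL_{Mac}$-normal form~(\ref{mac:qf:normal:form}) by~(\ref{mac,D:qf:normal:form}).

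For Step~1 I would verify that the new features of~(\ref{mac,D:qf:normal:form}), namely the $D$-literals and the guarded existential clauses $\ex{Y_j}(Y_j \neq 0 \wedge Y_j G_j = H_j \wedge D(Y_j,1) = Y_j)$, still define open sets in the valuation topology on $K^{\times}$. An atom $P_u(D(T, S))$ unfolds to $S \neq 0 \wedge v(T) \geq v(S) \wedge T/S \in P_u$; since $v$ is locally constant on $K^{\times}$ and $P_u$ is clopen by Lemma~\ref{carries:henselian:clopen} (respectively Lemma~\ref{carries:henselian:clopen:mixed:char}), this set is open. The guarded existential clause unfolds, on the open locus where $G_j \neq 0$, to $H_j \neq 0 \wedge v(H_j) \geq v(G_j)$, which is again open by local constancy of $v$. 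Step~2 then isolates, exactly as before, a disjunct $\varphi_0$ of the QE normal form whose defining set is not contained in any proper Zariski closed subset of $K^n$; such a $\varphi_0$ in particular avoids the degenerate locus $G_j = 0$.

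For Step~3 I would apply the substitutions $r \mapsto rt^u$, $t \mapsto t^{ku+1}$ from Lemmas~\ref{no:equation}--\ref{mac:type:not:isolated} (respectively $x \mapsto p^{ku} x_*$ from Lemma~\ref{t:going:down}) to each conjunct of~(\ref{mac,D:qf:normal:form}). The equational, inequational, and $P_u$ conjuncts are preserved by the same leading-monomial analysis, invoking Hensel's or Newton's Lemma inside $\Q(r, t)^K$ or $\Q(x_*)^K$ as in the original proofs. The guarded existential conjuncts reduce to valuation inequalities between polynomial expressions, whose behaviour under the substitutions is governed by the same leading-monomial bookkeeping. The Omitting Types Theorem then produces a rank-1 elementarily equivalent valued field $(L, w)$, and the density--approximation argument of Section~\ref{section:prelim} furnishes the desired contradiction.

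The main obstacle is the bookkeeping for the guarded existential clauses under the Step~3 substitutions: for each $j$ and each substitution one must verify that the witness $Y_j = H_j / G_j$ remains a nonzero element of $\OO$. This amounts to a valuation comparison between two polynomials in the substituted variables, which in turn reduces to the same leading-monomial calculus that already controls the $P_u$ conjuncts. Hence no genuinely new technique is required, only careful bookkeeping of how the $D$-guards interact with the substitutions.
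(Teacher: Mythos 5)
Your proposal follows essentially the same route as the paper: apply the normal-form lemma for $\LL_{Mac, D}$, establish Step~1 openness from the clopen-ness of the $P_n$ (Lemmas~\ref{carries:henselian:clopen} and~\ref{carries:henselian:clopen:mixed:char}) together with continuity of polynomial quotients, and run Step~3 by feeding the substitutions of Lemmas~\ref{no:equation}--\ref{mac:type:not:isolated} (and the $\ex{X}\pi(X,Y)$ reduction with Lemma~\ref{t:going:down} in residue characteristic $p$) through the new $D$-atoms and guarded existential conjuncts, which the paper likewise handles by comparing valuations of leading monomials before invoking the Omitting Types Theorem and tightness. The only caveat is your parenthetical claim that Zariski-density of $\varphi_0(K^n)$ makes it ``avoid'' the locus $G_j=0$ (it only guarantees it is not contained in that locus), but this is a cosmetic point at the same level of informality as the paper's own treatment.
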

\begin{proof}
We shall check the three steps in Section~\ref{section:prelim}.

For Step~1 we need to show that, except the equations, all
conjuncts in the form~(\ref{mac,D:qf:normal:form}) define open
sets. By Lemma~\ref{carries:henselian:clopen} and
Lemma~\ref{carries:henselian:clopen:mixed:char} each $n$th power
predicate defines a clopen set. Since quotients of polynomials are
continuous maps, except the equations all the literal conjuncts
in~(\ref{mac,D:qf:normal:form}) define open sets. For the same
reason all the existential conjuncts there define open sets.

As before Step~2 can be carried out in exactly the same way. So we
may find an open set of coefficients that all witness the failure
of henselianity.

For Step~3 we need to show that no formula $\pi(X, Y)$ in
$\LL_{Mac, D}$ of the form~(\ref{mac,D:qf:normal:form}) can
isolate the 2-type~(\ref{the:2:type}). Suppose for contradiction
that there is such a formula $\pi(X, Y)$. Let $r, t \in \MM_v$
such that $r \ll t$ and $K \models \pi(r, t)$.

Suppose that $\cha(\K) = 0$. By Lemma~\ref{no:equation} $\pi(X,
Y)$ does not contain equations. Next, if we run the argument of
Lemma~\ref{powers:moved} for any conjunct
\[
P_{u_m}(D(T_m(X, Y), S_m(X,Y)))
\]
of $\pi(X, Y)$, where $T_m(r,t), S_m(r,t) \neq 0$,
then~(\ref{equation:powers:remainder}) turns into something of the
form
\[
K \models P_{u_m} (D(a_0r^dt^e, b_0r^ft^g)).
\]
So for sufficiently large natural number $k$
\[
K \models P_{u_m}(D(T_m(rt^{u_m}, t^{ku_m + 1}), S_m(rt^{u_m},
t^{ku_m + 1}))).
\]
Similarly we can conclude that, for sufficiently large $k$, the
pair $rt^u, t^{ku + 1}$ satisfies every conjunct in $\pi(X, Y)$
except the existential ones, where $u$ is as in
Lemma~\ref{mac:type:not:isolated}. For the existential conjuncts,
since $\Q \ll r \ll t$, we have
\[
v(ar^dt^e) = v(G_j(r,t)) \leq v(H_j(r,t)) = v(br^ft^g)
\]
for some natural numbers $a, b, d,e,f,g$. So either $e < g$ or $e
= g$ and $d < f$ or $e = g$, $d = f$, and $v(a) \leq v(b)$. So we
see that for sufficiently large $k$
\[
v(G_j(rt^u, t^{ku + 1})) = v(a(rt^u)^d(t^{ku + 1})^e) \leq
v(b(rt^u)^f(t^{ku + 1})^g) = v(H_j(rt^u, t^{ku + 1})).
\]
So indeed we can find a sufficiently large $k$ such that the pair
of comparable elements $rt^u, t^{ku + 1}$ satisfies every conjunct
in $\pi(X, Y)$, which yields a contradiction.

Suppose that $\cha(\K) = p > 0$. So $p \ll t$. There is a formula
$\bigvee_i \varphi_i(Y)$ such that each $\varphi_i(Y)$ is in the
form~(\ref{mac,D:qf:normal:form}) and
\[
K \models \ex{X} \pi(X, Y) \liff \bigvee_i  \varphi_i(Y).
\]
Say, $K \models \varphi_1(t)$. Then the proof of
Lemma~\ref{no:equation} shows that $\varphi_1(Y)$ does not contain
equations. Modifying the proof of Lemma~\ref{t:going:down} as in
the last paragraph we see that there is a $t^*$ with $t^* \asymp
p$ that satisfies all the literals that involve $n$th power
predicates and the inequality in $\varphi_1(Y)$. Moreover for any
$n$ this $t^*$ may be chosen so that $v(t^*) > v(p^n)$. Now, since
$t \gg p$, for each existential conjunct in $\varphi_1(Y)$ we have
\[
v(at^e) = v(G_j(t)) \leq v(H_j(t)) = v(bt^g)
\]
for some natural numbers $a,b, e, g$ with either $e < g$ or $e =
g$ and $v(a) \leq v(b)$. So $t^*$ may be chosen so that
\[
v(G_j(t^*)) = v(a(t^*)^e) \leq v(b(t^*)^g) = v(H_j(t^*)).
\]
So there is a $t^*$ with $t^* \asymp p$ such that $K \models
\ex{X} \pi(X, t^*)$. This is a contradiction since $(K, v)$ is
tight.
\end{proof}

\section{Henselianity without QE}\label{section:henselianity:without:qe}

In the last section we have seen that if a valued field is of
bounded prohenselian degree then QE and other logical conditions
are needed to show henselianity. In this short section we shall
see that prohenselianity may imply henselianity without logical
conditions.

Let $L$ be a field of finite transcendence degree and $w$ a
valuation of $L$. Let $\OO, \MM$ be its valuation ring and maximal
ideal. For any extension of fields $L/K$ we write $\td L/K$ for
the transcendence degree of $L$ over $K$. If $K$ is the prime
field of $L$ then we simply write $\td L$. We shall need the
following fact:

\begin{prop}\label{trdeg:rk:inequ}
Suppose that $L/K$ is a field extension and both $\rk wL$ and $\td
L/K$ are finite. Then
\[
\td \LOL/\K + \rk wL \leq \td L/K + \rk wK.
\]
\end{prop}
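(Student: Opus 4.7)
The plan is to induct on the rank $n = \rk wL$. The base case $n = 0$ is immediate: then $w$ is trivial on $L$, so $\LOL = L$ and $\rk wK = 0$, and the inequality reduces to $\td L/K \leq \td L/K$.

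For the inductive step I would peel off a rank-one coarsening. Let $C$ be the largest proper convex subgroup of $wL$, giving a rank-one coarsening $w_1$ of $w$ on $L$ with value group $wL/C$; its residue field $L_1$ carries an induced valuation $w_2$ with value group $C$, rank $n - 1$, and residue field $\LOL$. Restricting to $K$, set $H = C \cap wK$, which is convex in $wK$. The restriction $w_1|_K$ has value group $wK/H$, which embeds into the rank-one group $wL/C$, so its rank is some $\delta \in \{0,1\}$; by additivity of rank for short exact sequences of ordered abelian groups, $\rk wK = \delta + \rk H$. Letting $K_1$ be the residue field of $w_1|_K$, the restriction $w_2|_{K_1}$ has value group $H$ of rank $\rk wK - \delta$.

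The key ingredient is the classical Abhyankar inequality applied to the rank-one extension $(L, w_1)/(K, w_1|_K)$: writing $r$ for rational rank, one has $\td L_1/K_1 + r(w_1L/w_1K) \leq \td L/K$. Since $w_1L$ is nontrivial of rank one it is torsion-free, so $r(w_1L) \geq 1$; hence $r(w_1L/w_1K) \geq 1 - \delta$ in both cases (trivially when $\delta = 1$, and equal to $r(w_1L) \geq 1$ when $\delta = 0$, i.e.\ $w_1|_K$ trivial). Therefore $\td L_1/K_1 \leq \td L/K - 1 + \delta$. Invoking the inductive hypothesis for $L_1/K_1$ with the rank-$(n-1)$ valuation $w_2$ then yields
\[
\td \LOL/\K + (n - 1) \leq \td L_1/K_1 + (\rk wK - \delta) \leq \td L/K + \rk wK - 1,
\]
and rearranging gives the inequality of the proposition.

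Modulo this outline, the substantive external input is the classical rank-one Abhyankar inequality, which I would cite as a well-known fact; the rest is bookkeeping about decompositions of valuations. The delicate step to watch is the case $\delta = 0$, where $w_1|_K$ is trivial and one must exploit that a nontrivial rank-one value group has rational rank at least one, so that Abhyankar's inequality produces exactly the drop in transcendence degree needed to absorb the unit decrease in rank.
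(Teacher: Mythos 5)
Your argument is correct: the base case is trivial, the decomposition through the largest proper convex subgroup $C$ of $wL$ is legitimate (it exists because the rank is finite), the bookkeeping $\rk wK = \delta + \rk(C\cap wK)$ uses additivity of rank over the convex subgroup $C\cap wK$, the compatibility of the coarsening/composition with the subfield $K$ (so that $w_2$ restricted to $K_1$ has value group $C\cap wK$ and residue field $\K$) is standard, and the case analysis on $\delta$ correctly extracts $\td L_1/K_1 \leq \td L/K - 1 + \delta$ from Abhyankar's inequality, after which the induction closes. The paper, however, does not prove the proposition at all: it simply cites Corollary~3.4.4 of Engler--Prestel. The textbook's derivation goes through the general (any-rank) Abhyankar inequality $\td \LOL/\K + \dim_{\Q}\bigl(\Q\otimes(wL/wK)\bigr) \leq \td L/K$ combined with a purely group-theoretic comparison of rank and rational rank, namely $\rk wL \leq \rk wK + \dim_{\Q}\bigl(\Q\otimes(wL/wK)\bigr)$; your route replaces that group-theoretic lemma by an induction on $\rk wL$ that peels off one rank-one coarsening at a time and only ever invokes Abhyankar for the rank-one piece. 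What your approach buys is a self-contained reduction to the most classical form of Abhyankar's inequality at the cost of repeatedly using the decomposition machinery; what the citation (and the textbook argument) buys is brevity and a clean separation between the valuation-theoretic input and the ordered-abelian-group combinatorics. Either way the statement stands, so there is no gap to repair, only the observation that you proved something the paper was content to quote.
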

\begin{proof}
See~\cite[Corollary 3.4.4]{engler:prestel:2005}.
\end{proof}

By the proof of Lemma~\ref{no:equation} we have $\rk wL \leq \td L
+ 1$. We say that $(L,w)$ is \emph{flat} if
\begin{enumerate}
 \item $\rk wL \geq \td L$;
 \item if $\cha(\LOL) = p > 0$ then $(L,w)$ is tight and if moreover
 $\rk wL = \td L$ then there is a transcendental element $t$ such that
 $\rk w\Q(t)^L = 1$.
\end{enumerate}

Now, if $L$ carries a henselian valuation then it cannot be an
finite extension of $K(t)$, where $K$ is a subfield of $L$ and $t$
is transcendental over $K$;
see~\cite[Proposition~21]{Kuhlmann:F.-V:2006}. Moreover, if every
subfield $K^L \sub L$ carries a henselian valuation and $(K^L, w)$
is not antihenselian then by Corollary~\ref{henselian:coarsening}
$(L, w)$ is prohenselian. It is not hard to block antihenselianity
for each $(K^L, w)$. For example, if $\cha(\LOL) = p > 0$ and
$w(p)$ is not divisible in $wL$ then, for all subfield $K \sub L$,
$w K^L$ is not divisible and hence by
Proposition~\ref{antihenselian:conditions} $(K^L, w)$ is not
antihenselian. Of course if $\rk wL = \td L \leq 1$ then
prohenselianity and henselianity are the same. So the following
proposition is really about valued fields whose value groups have
high ranks.

\begin{prop}
If $(L, w)$ is flat and prohenselian then $w$ is a henselian
valuation.
\end{prop}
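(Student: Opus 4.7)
The plan is to induct on $\rk wL$, which is finite since $\rk wL \le \td L + 1$. For the base case $\rk wL \le 1$, flatness supplies a transcendental $t \in \MM$ with $\rk w\Q(t)^L = 1$ (in residue characteristic $p$ by the definition of flat; in residue characteristic $0$, either $\td L = 0$ so $L$ is algebraic over $\Q$ and any $t \in \MM$ works, or $\td L = 1$ and any transcendental in $\MM$ works), whence $L = \Q(t)^L$. Prohenselianity at $m = 1$ then supplies a henselian coarsening of $\OO$; by the paper's convention that valuation rings are proper subrings and the fact that $\rk wL \le 1$ leaves no room for a proper coarsening, this coarsening must equal $\OO$ itself, so $\OO$ is henselian.

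For the inductive step take $\rk wL = n \ge 2$ and assume the result at lower ranks. From the convex subgroup chain $\{0\} = H_0 \subsetneq H_1 \subsetneq \cdots \subsetneq H_n = wL$ pick $t_i \in \MM$ with $w(t_i) \in H_i \setminus H_{i-1}$, so that $t_1 \ll \cdots \ll t_n$; these are algebraically independent by the proof of Lemma~\ref{no:equation}, and Proposition~\ref{trdeg:rk:inequ} combined with flatness forces them (up to discarding one in residue characteristic $p$) to be a transcendence basis of $L$, giving $L = \Q(t_1, \ldots, t_n)^L$. Prohenselianity at $m = n$ yields a henselian coarsening $\hat\OO$ of $\OO$. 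If $\hat\OO = \OO$ we are done; otherwise $\hat\OO$ corresponds to some $H_j$ with $1 \le j < n$, and by Theorem~\ref{composition:henselian} it suffices to show that the induced valuation $w^*$ on the residue field $L^* = \hat\OO/\hat\MM$ is henselian.

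To invoke the induction hypothesis on $(L^*, w^*)$ I verify that it is flat and prohenselian. The value group of $w^*$ is $H_j$, of rank $j$; the residues $\bar t_1, \ldots, \bar t_j$ are algebraically independent in $L^*$ by the Lemma~\ref{no:equation} argument carried out inside $H_j$, and Proposition~\ref{trdeg:rk:inequ} applied to $\hat w$ together with tightness pins down $\td L^* = j$; the residue characteristic $p$ clauses of flatness transfer by inspection, using $\bar t_1$ as the rank-$1$ transcendental when needed. For prohenselianity, lift a sequence $s_1 \ll \cdots \ll s_m$ in the maximal ideal of $w^*$ to $\tilde s_1, \ldots, \tilde s_m \in \MM$ with $w(\tilde s_i) \in H_j$; then the subfield $F = \Q(\tilde s_1, \ldots, \tilde s_m)^L$ of $L$ satisfies $F \subseteq \hat\OO$ and $F \cap \hat\MM = \{0\}$, so under the residue map $F$ embeds into $L^*$ as an algebraic subfield of $\Q(s_1, \ldots, s_m)^{L^*}$; prohenselianity of $(L, w)$ provides a henselian coarsening of $F \cap \OO$, whose unique henselian extension along this algebraic extension is the required henselian coarsening of $\Q(s_1, \ldots, s_m)^{L^*} \cap \OO^*$.

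The main obstacle is this descent of prohenselianity from $L$ to $L^*$: one must carefully track the convex subgroups under coarsening and residue, ensure that the embedding $F \hookrightarrow L^*$ preserves the relevant valuation-theoretic data, and check that the extension to $\Q(s_1, \ldots, s_m)^{L^*}$ is a proper valuation ring in the sense of the paper (relying on uniqueness of henselian extension along algebraic extensions and on the fact that the saturation of a proper convex subgroup remains proper). Once the descent is secured, the induction hypothesis applies to $(L^*, w^*)$ to give $w^*$ henselian, and Theorem~\ref{composition:henselian} completes the proof.
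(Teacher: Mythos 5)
Your route is genuinely different from the paper's, and the difference is exactly where your argument breaks down. You induct on $\rk wL$ and try to push flatness and prohenselianity down to the residue field $(L^*, w^*)$ of the henselian coarsening, then conclude by Theorem~\ref{composition:henselian}. The paper instead inducts on $\td L$, applies the inductive hypothesis to the subfield $\Q(t_1, \ldots, t_n)^L$ of $L$ itself --- where flatness and prohenselianity are inherited essentially for free, because $\Q(s_1,\ldots,s_m)^{M} = \Q(s_1,\ldots,s_m)^{L}$ for a relatively algebraically closed subfield $M$ --- and then uses Proposition~\ref{trdeg:rk:inequ} to see that the coarsened residue field $\widehat{\LOL}$ is \emph{algebraic} over that subfield, so that henselianity passes up the algebraic extension before Theorem~\ref{composition:henselian} is applied. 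That choice of where to run the induction is precisely what lets the paper avoid the descent you yourself call ``the main obstacle''.

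And that obstacle is a genuine gap as you have written it. The assertion that $F = \Q(\tilde s_1, \ldots, \tilde s_m)^L$ satisfies $F \sub \hat{\OO}$ and $F \cap \hat{\MM} = \{0\}$ does not follow from $w(\tilde s_i) \in H_j$: cancellation in polynomial expressions can push values of elements of the generated (relatively algebraically closed) field outside the convex subgroup $H_j$. Concretely, if some nonzero $P \in \Q[X_1,\ldots,X_m]$ vanishes at $(s_1,\ldots,s_m)$ while $P(\tilde s_1,\ldots,\tilde s_m) \neq 0$, then $P(\tilde s_1,\ldots,\tilde s_m)$ is a nonzero element of $F \cap \hat{\MM}$ and your embedding of $F$ into $L^*$ collapses; the claim is safe exactly when $s_1, \ldots, s_m$ are algebraically independent over $\Q$. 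In residue characteristic $0$ that independence does hold, by the argument of Lemma~\ref{no:equation} applied to the incomparable chain $s_1 \ll \cdots \ll s_m$ inside $(L^*, w^*)$, but you never invoke it, and it is the entire content of the descent; with it supplied, your characteristic-zero case could likely be completed. In residue characteristic $p$ it can genuinely fail as stated (the bottom element of the chain may be algebraic over $\Q$, e.g.\ a root of $p$), and the same issue infects your claims that $t_1, \ldots, t_n$ form a transcendence basis ``up to discarding one'' with $L = \Q(t_1, \ldots, t_n)^L$, and that the flatness clauses for $(L^*, w^*)$ transfer ``by inspection'': the bookkeeping with $\asymp p$ versus $\ll$, tightness, and the special rank-one transcendental is exactly what the paper's explicit choice of transcendence basis (with $\cha(\LOL) \ll t_1$ or $\cha(\LOL) \asymp t_1$, and $t_{n+1}$ cofinal) is doing, and it cannot be waved through. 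As written, then, the proof is incomplete, and the characteristic-$p$ half needs either substantial additional argument or the paper's subfield route.
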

\begin{proof}
The proof is by induction on $\td L$. For the base case we have
$\td L = 1$ if $\cha(\LOL) = 0$ or $\td L \leq 1$ if $\cha(\LOL) =
p > 0$ and $\rk wL = 1$. Since in both cases $w$ cannot be
coarsened as $\rk wL =1$, it must be henselian.

Now suppose that $\td L = n + 1$. Since $(L, w)$ is flat, we may
choose a transcendence base $\set{t_1, \ldots, t_{n+1}} \sub \MM$
of $L$ such that
\begin{itemize}
 \item $\cha(\LOL) \ll t_1 \ll \ldots \ll t_{n+1}$ or $\cha(\LOL) \asymp t_1 \ll \ldots \ll t_{n+1}$
 if $\cha(\LOL) = p > 0$ and $\rk wL = n+1$,
 \item $\rk w\Q(t_1)^L = 1$ if $\cha(\LOL) = p > 0$ and $\rk wL = n+1$,
 \item $t_{n+1}$ is a cofinal element in $L$.
\end{itemize}
Let $\hat{w}$ be a henselian coarsening of $w$ and
$\widehat{\LOL}$ the corresponding residue field. In fact we may
assume that $\hat{w}L = wL / \Gamma$, where $\Gamma$ is the
largest proper convex subgroup of $wL$. Note that by the proof of
Lemma~\ref{no:equation} $w\Q(t_1, \ldots, t_n)^L \sub \Gamma$. So
$\rk \hat{w} \Q(t_1, \ldots, t_n)^L = 0$ and the residue field
$\lbar{\Q(t_1, \ldots, t_n)^L}$ with respect to the trivial
valuation is just $\Q(t_1, \ldots, t_n)^L$ itself. Applying
Proposition~\ref{trdeg:rk:inequ} with $K = \Q(t_1, \ldots, t_n)^L$
we get
\[
\td \widehat{\LOL}/\Q(t_1, \ldots, t_n)^L + \rk \hat{w} L \leq \td
L/\Q(t_1, \ldots, t_n)^L = 1.
\]
Hence
\[
\td \widehat{\LOL}/\Q(t_1, \ldots, t_n)^L = 0.
\]
That is, $\widehat{\LOL}$ is algebraic over $\Q(t_1, \ldots,
t_n)^L$.

Since $(\Q(t_1, \ldots, t_n)^L, w)$ is clearly flat and
prohenselian, by the inductive hypothesis the restriction of $w$
to $\Q(t_1, \ldots, t_n)^L$ is henselian. Let $(\widehat{\LOL},
w')$ be the valued field induced by the pair $w, \hat{w}$. There
is an induced valued-field embedding of $(\Q(t_1, \ldots, t_n)^L,
w)$ into $(\widehat{\LOL}, w')$. But $\widehat{\LOL}$ is algebraic
over $\Q(t_1, \ldots, t_n)^L$ and $w$ is a henselian valuation,
clearly $w'$ is also a henselian valuation. Now by
Theorem~\ref{composition:henselian} we conclude that $(L,w)$ is
henselian.
\end{proof}

\section{Henselianity and Denef-Pas style languages}\label{section:denef:pas}

Recall that the three component languages of the prototypical
Denef-Pas language $\mathcal{L}_{RRP}$ are $\LL_R$, $\LL_R$, and
$\LL_{Pr\infty}$. For simplicity, we work with the version of
$\LL_{Pr}$ that does not contain the inverse function symbol $-$.
Throughout this section let $S = \langle K, \K, \Gamma \cup
\set{\infty}, v, \ac \rangle$ be a structure of
$\mathcal{L}_{RRP}$ such that
\begin{enumerate}
 \item $\cha K = 0$,
 \item $\cha \K = 0$,
 \item $v$ and $\ac$ are interpreted as a valuation map and an angular
 component map respectively,
 \item the value group $\Gamma$ is a $\Z$-group,
 \item the theory $\Th(S)$ admits QE in the $K$-sort.
\end{enumerate}

We shall prove:

\begin{thm}\label{converse:qe:rrp}
Under these conditions, the valuation $v$ is henselian.
\end{thm}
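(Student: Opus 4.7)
The plan is to follow the three-step template from Section~\ref{section:prelim}, now in the Denef-Pas setting. By QE in the $K$-sort every formula is equivalent to a Boolean combination of $K$-equations $P(\bar X)=0$ (with $P\in\Z[\bar X]$), $\Gamma$-formulas $\psi_\Gamma(v(P_1(\bar X)),\ldots,v(P_r(\bar X)))$, and $\K$-formulas $\psi_\K(\ac(Q_1(\bar X)),\ldots,\ac(Q_s(\bar X)))$. For Step~1 the crucial continuity fact is that both $\bar a\mapsto v(P(\bar a))$ and $\bar a\mapsto\ac(P(\bar a))$ are locally constant on $\{P(\bar a)\ne 0\}$ in the valuation topology on $K^\times$: if $\bar a'$ differs from $\bar a$ by a vector of valuation strictly exceeding $v(P(\bar a))$, then $v(P(\bar a'))=v(P(\bar a))$ and $\ac(P(\bar a'))=\ac(P(\bar a))$. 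Hence every non-equational conjunct of a simple formula defines an open set at any tuple where the occurring polynomials are nonzero. Step~2 is exactly as in Section~\ref{section:prelim}: if $v$ is not henselian, the $\mathcal{L}_{RRP}$-formula defining the set of counterexample coefficient tuples (for a fixed degree) yields, after QE, a disjunct $\varphi_0$ without equational conditions, cutting out a nonempty open $U\subseteq K^n$ by Step~1.

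For Step~3 we need density of the valued field in its henselization, and I would reduce to a rank-one value group via the Omitting Types Theorem applied to the partial $\Gamma$-type
\[
\Phi(Y)=\{Y\ne\infty\}\cup\{Y>n\cdot 1:n\in\N\}
\]
of honestly infinite elements. To show non-isolation modulo $\Th(S)$, suppose $\pi(Y)$ isolates $\Phi$; since $\pi$ is simple with only a $\Gamma$-sorted free variable it contains no $K$-terms, and hence decomposes as $\bigvee_i(\alpha_i(Y)\wedge\beta_i)$ with $\alpha_i$ a parameter-free $\Gamma$-formula and $\beta_i$ a $\K$-sentence. Completeness of $\Th(S)$ decides each $\beta_i$ and the surviving disjunction collapses, modulo $\Th(S)$, to a single parameter-free Presburger formula $\alpha(Y)$. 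Such an $\alpha$ defines, in $\Gamma$, a finite union of arithmetic progressions and half-lines which (by $\Z\cdot 1\prec\Gamma$ as Presburger structures) is already realized in the standard part $\Z\cdot 1$; any such standard realizer fails $Y>n\cdot 1$ for sufficiently large $n$, contradicting isolation. By the group axioms, omitting $\Phi$ also omits negative honestly-infinite elements (as their additive inverses would realize $\Phi$), so the Omitting Types Theorem produces a countable $S'=(L,\LOL,\Gamma',v',\ac')\equiv S$ whose value group $\Gamma'$ is Archimedean; being a $\Z$-group with least positive element, $\Gamma'\cong\Z$, so $\rk v'L=1$ and $L$ is dense in its henselization $L^h$.

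Finally, since the henselian property for each fixed degree is first-order expressible in $\mathcal{L}_{RRP}$ using $v$ and $\ac$, it suffices to contradict henselianity in $(L,v')$. Transfer the open set $U$ from Step~2 to $L$ via elementary equivalence and pick $\bar a\in U$; let $\rho\in L^h$ be a root of the Hensel counterexample $F(X,\bar a)$, factor $F(X,\bar a)=(X+\rho)F^*(X,\bar b)$ with $\bar b\in L^h$, and use density of $L$ in $L^h$ to choose $\rho',\bar b'\in L$ sufficiently close to $\rho,\bar b$. The coefficient tuple $\bar a'$ of $(X+\rho')F^*(X,\bar b')$ then lies in $U$, yet $F(X,\bar a')$ has the root $-\rho'\in L$ --- contradiction, so $v'$ (and hence $v$) is henselian. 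The main obstacle is the non-isolation argument in Step~3: one must verify carefully that a simple formula in a single $\Gamma$-variable really reduces modulo $\Th(S)$ to a Presburger formula, which rests on the observation that in the absence of $K$-variables the bridging maps $v$ and $\ac$ cannot couple the $\K$- and $\Gamma$-components.
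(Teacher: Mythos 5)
Your proposal is correct in outline and follows the paper's Steps~1 and~2 closely, but your Step~3 takes a genuinely different route. The paper omits the $K$-sort 2-type~(\ref{the:2:type}), i.e.\ the type of a pair $x \ll y$ of multiplicatively incomparable elements; showing that type non-isolated is the technical heart of Section~\ref{section:denef:pas} and requires Lemma~\ref{no:equation} plus Lemma~\ref{type B:moved} (pulling $v(y)$ down into the archimedean class of $v(x)$, using cofinality of $\{k\,v(x)\}$ in the $\Z$-group generated by $v(x)$ and model-completeness of Presburger) and Lemma~\ref{type d:moved} (an angular-component computation showing one may keep $\ac t=\ac y$ while lowering $v(t)$). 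You instead omit the $\Gamma$-sort 1-type of nonstandard elements; non-isolation then collapses, via Lemma~\ref{simple:formula:separated}, to pure Presburger, precisely because a simple formula whose only free variable is of $\Gamma$-sort cannot couple the sorts through $v$ and $\ac$ in any nontrivial way. This works, and it even gives a stronger auxiliary model ($\Gamma'\cong\Z$ rather than merely rank~1); the trade-off is that your argument leans on the $\Z$-group hypothesis (elementarity of the standard part $\Z\cdot 1$ and discreteness, which is what turns ``bounded'' into ``standard''), whereas the paper's two-variable argument isolates exactly the two properties of the value group it needs (Remark~\ref{other:group:also:do}) and its $\ac$-analysis is what later permits the mixed-characteristic extension mentioned at the end of Section~\ref{section:denef:pas}.

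Two small repairs are needed in your write-up, neither fatal. First, in Step~1 your local-constancy threshold ``perturbations of valuation exceeding $v(P(\bar a))$'' is not sufficient when some coordinates lie outside $\OO$ (a term of the error polynomial can pick up large negative valuation from those coordinates); one must inflate the bound as in Lemma~\ref{residue:point:open}, requiring $v(y) > vF(x) + c\abs{v(x_1)} + \ldots + c\abs{v(x_e)}$. Second, ``$\pi$ contains no $K$-terms'' should read ``only closed $K$-terms'': integer constants may occur inside $v(\cdot)$, $\ac(\cdot)$ and in $\LL_K$-sentences, and one uses $\cha K=\cha\K=0$ (so $v(n)=0$, $\ac(n)=\bar n\neq 0$ for $n\neq 0$) together with completeness of $\Th(S)$ to evaluate them away before reducing to a formula of $\LL_{Pr\infty}$; also remember that the realizer produced in $\Z\cdot 1\cup\{\infty\}$ might be $\infty$ or a nonpositive multiple of $1$, which is harmless since such an element still fails your type.
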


The proof of this theorem can be adapted for other Denef-Pas style
languages as well, provided that the value group satisfies certain
mild conditions; see Remark~\ref{other:group:also:do}.

\begin{rem}\label{qe:with:infty}
The theory of $\Z$-groups with a top element in
$\mathcal{L}_{Pr\infty}$ admits QE. This basically follows from
Lemma~5.4 and Lemma~5.5,~\cite{Pa89}.
\end{rem}

In this section the following notational conventions are adopted.
We use $X, Y$, etc. for $K$-sort variables, $M, N$, etc. for
$\Gamma$-sort variables, and $\Xi, \Lambda$, etc. for $\K$-sort
variables. The lowercase of these letters stands for closed terms
or elements in the corresponding sorts. Unless indicated
otherwise, all these letters stand for tuples of variables
whenever they appear in a formula. We use $\lh X$ to denote the
length of $X$. Let $\Z$ and $\lbar{\Z}$ be the rings of integers
of $K$ and $\K$, respectively. Let $\Z_{\Gamma}$ be the smallest
convex subgroup of $\Gamma$.

Every quantifier-free formula in $\mathcal{L}_{RRP}$ is a
disjunction of conjunctions of literals of the following kinds:
\begin{itemize}
 \item Type A: $F(X)\,\, \Box \,\,0$, where $\Box$ is either $=$ or $\neq$ and
 $F(X) \in \Z[X]$.
 \item Type B: $vF_1(X) + M_1 + n_1 \,\, \Box \,\, vF_2(X) + M_2 + n_2$, where
 $\Box$ is one of the symbols $=$, $\neq$, $<$, $>$, and $F_1(X), F_2(X)\in
 \Z[X]$.
 \item Type C: $D_h(vF(X) + M + n)$ or $\neg D_h(vF(X) + M + n)$, where $F(X) \in \Z[X]$.
 \item Type D: $\sum_{i=1}^h G_i(\Lambda)\ac F_i(X) \,\, \Box \,\, 0$,
 where $\Box$ is either $=$ or $\neq$, $F_i(X) \in \Z[X]$, and $G_i(\Lambda)
 \in \lbar{\Z}[\Lambda]$.
\end{itemize}

The following lemma is slightly more general
than~\cite[Lemma~5.3]{Pa89}. Recall the definitions concerning
Denef-Pas style languages in Section~\ref{section:prelim}.

\begin{lem}\label{simple:formula:separated}
Let $\varphi$ be a simple formula in $\mathcal{L}_{RRP}$. Then
$\varphi$ is equivalent to a formula of the form
\[
\bigvee_i (\sigma_i \wedge \chi_i \wedge \theta_i)
\]
where $\sigma_i$ is a quantifier-free formula in $\mathcal{L}_{K}$,
$\chi_i$ a $\K$-formula, and $\theta_i$ a $\Gamma$-formula.
\end{lem}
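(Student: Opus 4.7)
The plan is to proceed by induction on the construction of the simple formula $\varphi$, exploiting the fact that the four atomic types fall cleanly into the three target classes: Type~A is a quantifier-free $\mathcal{L}_K$-formula $\sigma$; Types~B and~C are $\Gamma$-formulas $\theta$; and Type~D is a $\K$-formula $\chi$. The fundamental observation underlying the separation is that the only cross-sort coupling occurs through $v$ and $\ac$ applied to $K$-terms, so no $\Gamma$-variable ever appears in a $\K$-atom and no $\K$-variable ever appears in a $\Gamma$-atom.

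For the base of the induction, every atomic formula of $\mathcal{L}_{RRP}$ involving $K$-variables is of one of the four types enumerated before the statement, hence can be written in the target form with two of the three conjuncts taken to be $\top$. For the propositional connectives one puts the intermediate formula in disjunctive normal form and repackages; the three classes are each closed under negation, since they are defined purely syntactically (by forbidding $K$-quantifiers, and additionally $\mathcal{L}_K$-atoms in the last two). Because $\varphi$ is simple, no $K$-quantifier is ever introduced and $\sigma_i$ remains quantifier-free in $\mathcal{L}_K$ throughout.

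The only nontrivial step is the quantifier case. Suppose inductively that
\[
\varphi \equiv \bigvee_i (\sigma_i \wedge \chi_i \wedge \theta_i),
\]
and consider $\exists M\,\varphi$, where $M$ is a $\Gamma$-sort variable. Since $\sigma_i$ is an $\mathcal{L}_K$-formula and $\chi_i$ lies in $\mathcal{L}_K \cup \mathcal{L}_{\K}$, neither contains $M$; hence
\[
\exists M\,(\sigma_i \wedge \chi_i \wedge \theta_i) \equiv \sigma_i \wedge \chi_i \wedge \exists M\,\theta_i,
\]
and $\exists M\,\theta_i$ is still a $\Gamma$-formula. Distributing $\exists M$ over the disjunction produces the required form. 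The case $\exists \Xi\,\varphi$ for a $\K$-sort variable $\Xi$ is symmetric, using that $\sigma_i$ and $\theta_i$ contain no $\K$-variables, and universal quantifiers reduce to $\exists$ via negation.

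No significant obstacle arises: the argument is essentially bookkeeping, and the syntactic definitions of $\K$-formula and $\Gamma$-formula given in Section~\ref{section:prelim} are tailored precisely so that $\Gamma$-quantifiers and $\K$-quantifiers can legally be absorbed into the $\theta_i$ and $\chi_i$ components while leaving the $\sigma_i$ component untouched.
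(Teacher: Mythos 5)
Your proposal is correct and takes essentially the same route as the paper's proof: classify the literals into the three component classes (Type~A into $\sigma$, Type~D into $\chi$, Types~B and~C into $\theta$), pass to disjunctive normal form, and push each $\Gamma$- or $\K$-quantifier into the unique component that can contain the quantified variable. The only cosmetic differences are that the paper first takes a prenex normal form and inducts on the number of quantifiers, and it handles universal quantifiers by switching to conjunctive normal form rather than by double negation, but both come to the same bookkeeping.
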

\begin{proof}
We can write $\varphi$ in its prenex normal form $Q_1 \ldots Q_k
\,\, \psi$ where each $Q_j$ is either a $\Gamma$-quantifier or a
$\K$-quantifier and $\psi$ is a quantifier-free formula. We proceed
by induction on the number $k$ of quantifiers.

If $k = 0$ then $\varphi$ is quantifier-free. So $\varphi$ can be
written in its disjunctive normal form
\[
\bigvee_i (\sigma_i \wedge \chi_i \wedge \theta_i)
\]
where $\sigma_i$ is a conjunction of literals of Type~A, $\chi_i$ a
conjunction of literals of Type~D, and $\theta_i$ a conjunction of
literals of Type~B and Type~C. This proves the base case.

Suppose now $k = l + 1$. So by the inductive hypothesis $\varphi$
can be written in the form
\[
Q_1\,\, \bigvee_i (\sigma'_i \wedge \chi'_i \wedge \theta'_i)
\]
where $\sigma'_i$ is a quantifier-free formula in $\mathcal{L}_{K}$,
$\chi'_i$ a $\K$-formula, and $\theta'_i$ a $\Gamma$-formula. If
$Q_1$ is $\exists \, N$ then we can simply push the quantifier in
and write $\varphi$ as
\[
\bigvee_i (\sigma'_i \wedge \chi'_i \wedge \ex{N} \theta'_i).
\]
If $Q_1$ is $\forall \, N$ then we can rewrite $\bigvee_i
(\sigma'_i \wedge \chi'_i \wedge \theta'_i)$ in its conjunctive
normal form and then push the quantifier in. The other two cases
of $Q_1$ being $\exists \, \Xi$ or $\forall \, \Xi$ are treated in
the same way.
\end{proof}

Simple formulas play an important role in this section. By
Remark~\ref{qe:with:infty} and Lemma~\ref{simple:formula:separated},
they can be written as disjunctions of conjunctions of formulas of
the following forms:

\begin{itemize}
 \item Type I: Same as Type A.
 \item Type II: Same as Type B. Note that, since the conditions $vF(X) = \infty$ and $vF(X) \neq \infty$
 are equivalent to the conditions $F(X) = 0$ and $F(X) \neq 0$ respectively and the latter ones can
 be assimilated into Type I, we may assume that $F(X) \neq 0$ for each $F(X) \in
 \Z[X]$ that appears in a formula of this type.
 \item Type III: Same as Type C. As in Type II we may assume that $F(X) \neq 0$ for each $F(X) \in
 \Z[X]$ that appears in a formula of this type.
 \item Type IV: $\K$-formulas; that is, formulas of the form
  $Q_1 \ldots Q_k \,\, \psi$, where each $Q_j$ is a $\K$-quantifier and
  $\psi$ is a disjunction of conjunctions of literals of Type~D. Again, since
  the conditions $\ac F(X) = 0$ and $\ac F(X) \neq 0$ are equivalent to the
  conditions $F(X) = 0$ and $F(X) \neq 0$, we may assume that $F(X) \neq 0$
  for each $F(X) \in \Z[X]$ that appears in a formula of this type.
\end{itemize}

\subsection{Step~1: Clopen sets}

Since Step~2 and Step~3 do not involve formulas that contain free
$\K$-variables or free $\Gamma$-variables, we may limit our
attention to such formulas of Type I, II, III, and IV. We shall
show that such formulas, except the equalities in the $K$-sort,
define open sets in the corresponding product of the valuation
topology. This takes care of Step~1 in
Section~\ref{section:prelim}.

Since quotients of polynomials are continuous maps with respect to
the valuation topology, that formulas of Type~II define clopen
sets follows from the basic fact that, for $m \in \Gamma$, sets of
the forms $\{x : v(x) = m\}$, $\{x : v(x) > m\}$, etc. are all
clopen in the valuation topology.
See~\cite[Remark~2.3.3]{engler:prestel:2005}.

\begin{lem}\label{type c:clopen}
Let $\varphi(X)$ be a formula of Type~III. Then $\varphi$ defines a
clopen set.
\end{lem}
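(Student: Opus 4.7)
The plan is to show that on the open set $\{X : F(X) \neq 0\}$, the map $X \mapsto vF(X)$ is locally constant; this will immediately imply that both $D_h(vF(X) + n)$ and $\neg D_h(vF(X) + n)$ define open, and hence clopen, subsets in the product valuation topology. Here I rely on the standing stipulation preceding the lemma that $F(X) \neq 0$ may be assumed for each polynomial $F$ appearing in a Type~III literal (the condition $F(X) = 0$ having been absorbed into a Type~I conjunct).

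To verify that $vF$ is locally constant away from its zeros, I would fix a point $X_0$ with $F(X_0) \neq 0$ and use continuity of the polynomial $F$ in the (product) valuation topology to produce an open neighborhood $U$ of $X_0$ on which $v(F(X) - F(X_0)) > v(F(X_0))$. The ultrametric triangle inequality then forces $v(F(X)) = v(F(X_0))$ throughout $U$, so $D_h(vF(X) + n) \liff D_h(vF(X_0) + n)$ on $U$; that is, the literal has constant truth value on a neighborhood of $X_0$. The same argument handles $\neg D_h(vF(X) + n)$.

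There is no serious obstacle here: the whole argument rests on the continuity of polynomials together with the ultrametric fact that $v(a) = v(b)$ whenever $v(a - b) > v(b)$. The only mildly delicate point is that $D_h(vF(X) + n)$ must not be evaluated at zeros of $F$, which is why the separation into Type~I and Type~III noted above is essential.
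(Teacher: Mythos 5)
Your proof is correct and takes essentially the same route as the paper's: both arguments rest on continuity of polynomial maps in the valuation topology on $(K^{\times})^e$ together with the ultrametric fact that valuation level sets are clopen (equivalently, your observation that $vF$ is locally constant off the zero locus, with the case $F(X)=0$ shunted into Type~I exactly as the paper stipulates). The paper merely packages this differently, noting that each $A_m=\set{x : vF(x)=m}$ is clopen and that the defined set is at once a union of such sets and the complement of such a union; this is the same argument as your local-constancy formulation.
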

\begin{proof}
First let $\varphi(X)$ be of the form $D_h(vF(X) + n)$. Let $B \sub
\Gamma$ be the set of all solutions of the formula; that is, $m \in
B$ if and only if $S \models D_h(m  + n)$. For each $m \in \Gamma$
let
\[
A_m = \set{x \in (K^{\times})^e: v F(x) = m},
\]
where $e = \lh X$. Since polynomial maps are continuous, each $A_m$
is clopen in the valuation topology. So
\[
\varphi((K^{\times})^e) = \bigcup_{m \in B} A_m = (K^{\times})^e \mi
\bigcup_{m \notin B} A_m
\]
is clopen. The other case follows immediately from this.
\end{proof}

Let $\OO, \MM$ be the valuation ring and its maximal ideal that
correspond to $v$. The following lemma establishes a crucial
relation between the valuation and the angular component map.

\begin{lem}\label{angular:compo:value:up}
For nonzero $x, y \in K$ with $v(x) = v(y) = m \in \Gamma$, $\ac x =
\ac y$ if and only if $v(x - y) > m$.
\end{lem}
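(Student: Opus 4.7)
The plan is to reduce the claim to the defining property of the angular component map on units by passing to the quotient $x/y$. Since $v(x) = v(y) = m$, the element $u := x/y$ satisfies $v(u) = 0$, that is, $u \in \OO \mi \MM$. By the third defining clause of $\ac$, we have $\ac u = u + \MM$, and by the homomorphism clause on $K^{\times}$, $\ac u = \ac x / \ac y$ (note $\ac y \neq 0$ since $y \neq 0$).

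From here, I would argue in a single chain of equivalences. First, $\ac x = \ac y$ is equivalent to $\ac u = 1$, which by the projection clause is equivalent to $u + \MM = 1 + \MM$, i.e.\ $u - 1 \in \MM$, i.e.\ $v(u - 1) > 0$. Then, rewriting $u - 1 = (x - y)/y$ and using that $v$ is a valuation gives $v(u - 1) = v(x - y) - v(y) = v(x - y) - m$. Hence $v(u - 1) > 0$ is equivalent to $v(x - y) > m$, which completes both directions.

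There is no serious obstacle here; the only thing to be a little careful about is that one uses all three defining properties of $\ac$ (the vanishing at $0$ is used implicitly to exclude degenerate cases once $x, y \neq 0$ and $v(x)=v(y)$ guarantees $x/y$ is a unit). The proof is short and can be written as a direct calculation.
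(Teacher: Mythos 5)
Your proof is correct and takes essentially the same route as the paper's: both reduce the statement to the unit $x/y$ (the paper works with $y/x$ and $(x-y)/x$) and exploit that $\ac$ restricted to units is the residue projection and is multiplicative on $K^{\times}$. The only difference is presentational---you run a single chain of equivalences where the paper argues each direction by contradiction---so nothing further is needed.
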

\begin{proof}
If $x = y$ then the lemma is trivial. So we assume further that $x
\neq y$.

For the ``only if'' direction, suppose for contradiction that $\ac
x= \ac y$ but $v(x - y) = m$. So $(x - y)/x$ is a unit. So
\begin{equation*}
\begin{split}
\ac \frac{x-y}{x} &= 1 - \frac{y}{x} + \MM\\
                  &= 1 + \MM - \left(\frac{y}{x} + \MM \right)\\
                  &= 1 + \MM - \ac \frac{y}{x}\\
                  &= 1 + \MM - \frac{\ac y}{\ac x}\\
                  &= 0.
\end{split}
\end{equation*}
So $(x - y)/x = 0$, so $x=y$, contradiction.

For the ``if'' direction, suppose for contradiction that $v(x - y) >
m$ but $\ac x \neq \ac y$. If $m = 0$, that is, $x$ and $y$ are
units in the valuation ring, then
\[
x + \MM = \ac x \neq \ac y = y + \MM.
\]
So $x - y$ is a unit in the valuation ring, that is, $v(x - y) =
0$, contradiction. In general we may consider $1 - y/x$: since
$v(1 - y/x) > 0$ and $y/x$ is a unit, we get $\ac 1 = \ac (y/x)$
by the previous two sentences, so $\ac x = \ac y$.
\end{proof}

\begin{lem}\label{residue:point:open}
Let $\zeta \in \K^{\times}$ and $F(X) \in \Z[X]$. The set
\[
A_{\zeta} = \set{x \in (K^{\times})^e: \ac F(x) = \zeta}
\]
is clopen, where $e = \lh X$.
\end{lem}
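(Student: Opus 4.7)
The plan is to combine continuity of polynomial evaluation with Lemma~\ref{angular:compo:value:up}, which is the key bridge between the valuation topology and the angular component map. Since $\zeta \neq 0$, the condition $\ac F(x) = \zeta$ forces $F(x) \neq 0$, so $A_{\zeta}$ is contained in the open subset
\[
W = \set{x \in (K^{\times})^e : F(x) \neq 0}
\]
of $(K^{\times})^e$, and it suffices to show that $A_{\zeta}$ is clopen in $W$.

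For openness, fix $x_0 \in A_{\zeta}$ and set $m = v(F(x_0)) \in \Gamma$. Because polynomial evaluation is continuous in the valuation topology, there is an open neighborhood $U \sub W$ of $x_0$ on which $v(F(x) - F(x_0)) > m$. The ultrametric inequality gives $v(F(x)) = m$ on $U$, and Lemma~\ref{angular:compo:value:up} then forces $\ac F(x) = \ac F(x_0) = \zeta$ throughout $U$, so $U \sub A_{\zeta}$. For closedness in $W$, observe that the complement of $A_{\zeta}$ in $W$ decomposes as
\[
\bigcup_{\zeta' \in \K^{\times} \mi \set{\zeta}} A_{\zeta'},
\]
and the argument just given shows that each $A_{\zeta'}$ is open. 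Hence the complement of $A_{\zeta}$ in $W$ is open, so $A_{\zeta}$ is closed in $W$ and therefore clopen as desired.

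No serious obstacle is anticipated: the proof is essentially a direct consequence of Lemma~\ref{angular:compo:value:up}, the ultrametric property of $v$, and the continuity of polynomial maps. The only delicate point is restricting attention to $W$ so that the hypothesis $v(F(x)) = v(F(x_0)) \in \Gamma$ of Lemma~\ref{angular:compo:value:up} is genuinely available; this is automatic because $\set{F \neq 0}$ is open in $(K^{\times})^e$ by continuity of $F$ together with the fact that $\set{0}$ is closed in $K$ in the valuation topology.
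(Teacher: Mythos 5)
Your openness argument is correct and is essentially the paper's: the paper simply makes the continuity appeal explicit by constructing, around each point $x$, a box $U_x$ on which $v(F(x+y)-F(x))>vF(x)$, and then applies Lemma~\ref{angular:compo:value:up}, exactly as you do. The gap is in the reduction ``it suffices to show that $A_{\zeta}$ is clopen in $W$''. The set $W=\set{x\in (K^{\times})^e : F(x)\neq 0}$ is open but in general \emph{not} closed in $(K^{\times})^e$ (its complement, the zero locus of $F$ inside $(K^{\times})^e$, need not be open), so relative closedness in $W$ does not yield closedness in $(K^{\times})^e$: a point of $(K^{\times})^e$ at which $F$ vanishes can be an accumulation point of $A_{\zeta}$, and your argument says nothing about such points. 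Concretely, take $e=1$ and $F(X)=X-1$: every neighborhood of $x=1\in K^{\times}$ contains elements $1+y$ with $v(y)$ arbitrarily large and $\ac y=\zeta$, all of which lie in $A_{\zeta}$, while $1\notin W$. Thus your final paragraph, which declares the delicate point ``automatic'' because $W$ is open, is exactly where the argument breaks: openness of $W$ gives openness of $A_{\zeta}$ in $(K^{\times})^e$, but for the closedness half you would need $W$ to be closed, which it is not.

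For comparison, the paper's proof treats both halves at once by covering every point, whether in $A_{\zeta}$ or in its complement, by an explicit clopen box $U_x$ on which $\ac F$ is constant, and then writing $A_{\zeta}=\bigcup_{x\in A_{\zeta}}U_x=(K^{\times})^e\mi\bigcup_{x\notin A_{\zeta}}U_x$; note, however, that the definition of $U_x$ there also tacitly presupposes $vF(x)<\infty$, i.e.\ $F(x)\neq 0$, so complement points lying on the zero locus of $F$ are not genuinely covered by that argument either. In other words, you have correctly isolated the one truly delicate point of this lemma --- it is the same phenomenon as in Remark~\ref{why:tau*}: $\set{z\in K^{\times} : \ac z=\zeta}$ is clopen in $K^{\times}$ but not closed in $K$ because $0$ lies in its closure, and a polynomial $F$ can carry points of $(K^{\times})^e$ to $0$ --- but your proposed way of dismissing it is not valid. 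As written, your proposal establishes that $A_{\zeta}$ is open in $(K^{\times})^e$ and closed in $W$, not that it is clopen in $(K^{\times})^e$.
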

\begin{proof}
Let $X = \seq{X_1, \ldots, X_e}$. Write $F(X)$ as $\sum_{i} f_i
G_i(X)$, where $f_i \in \Z$ and each $G_i(X)$ is a unique monomial
in the summation. Let $c$ be a natural number that is larger than
all the exponents of the variables that appear in $F(X)$. Let $x =
\seq{x_1, \ldots, x_e} \in (K^{\times})^e$. For each $n \in \Gamma$
let $\abs{n} = n$ if $n \geq 0$, otherwise $\abs{n} = - n$. For each
$x_j$ with $1 \leq j \leq e$ let
\[
U_{j} = \set{x_j + y : y \in K \text{ and } v(y) > v F(x) +
c\abs{v(x_1)} + \ldots + c\abs{v(x_e)}}.
\]
Note that $x_j \in U_{j}$ and $0 \notin U_{j}$. Clearly each
$U_{j}$ is clopen in the valuation topology. Let
\[
U_x = U_{1} \times \ldots \times U_{e}.
\]
Now each $G_i(X)$ is of the form
\[
X_1^{c_1} \cdots X_e^{c_e}.
\]
For any $\seq{x_1 + y_1, \ldots, x_e + y_e} \in U_x$ we have
\[
f_i (x_1 + y_1)^{c_1} \cdots (x_e + y_e)^{c_e} = f_i x_1^{c_1}
\cdots x_e^{c_e} + H(x, y),
\]
where $y = \seq{y_1, \ldots, y_e}$, $H(X, Y) \in \Z[X, Y]$, and, by
the choice of $U_x$,
\[
v H(x, y) > v F(x).
\]
So
\[
v F(x_1 + y_1, \ldots, x_e + y_e) = v F(x)
\]
and
\[
v (F(x_1 + y_1, \ldots, x_e + y_e) - F(x)) > v F(x).
\]
So by Lemma~\ref{angular:compo:value:up} we get
\[
\ac F(x_1 + y_1, \ldots, x_e + y_e) = \ac F(x).
\]
So
\[
A_{\zeta} = \bigcup_{x \in A_{\zeta}} U_x = (K^{\times})^e \mi
\bigcup_{x \notin A_{\zeta}} U_x
\]
is clopen.
\end{proof}

\begin{rem}\label{why:tau*}
It may seem that we can use the continuity of polynomial maps,
much as in the proof of Lemma~\ref{type c:clopen}, to prove the
above lemma. But this does not work because for $\zeta \in
\K^{\times}$ the set
\[
A_{\zeta} = \set{x \in K: \ac x = \zeta},
\]
although clopen in the valuation topology on $K^{\times}$, is not
closed in the valuation topology on $K$ as there is no open
neighborhood of $0$ that does not intersect with $A_{\zeta}$. This
is the reason why we have chosen to work with the valuation topology
on $K^{\times}$ instead of $K$.
\end{rem}

\begin{lem}\label{clopen:type:IV}
Let $\varphi(X)$ be a formula of Type~IV. Then $\varphi$ defines a
clopen set.
\end{lem}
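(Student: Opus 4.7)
The plan is to reduce the statement to Lemma~\ref{residue:point:open} by observing that a Type~IV formula $\varphi(X)$ depends on its $K$-sort argument only through the residues of the finitely many nonzero polynomials $F_1,\ldots,F_N \in \Z[X]$ that appear in it. Since every quantifier inside $\varphi$ is a $\K$-quantifier and each occurrence of the field variables $X$ sits inside some term $\ac F_j(X)$, one may manufacture a formula $\tilde\varphi(\Xi_1, \ldots, \Xi_N)$ in the pure residue-field language (by replacing each occurrence of $\ac F_j(X)$ in $\varphi$ by a fresh $\K$-sort variable $\Xi_j$) such that, under the standing convention $F_j(X) \neq 0$,
\[
S \models \varphi(x) \iff \K \models \tilde\varphi(\ac F_1(x), \ldots, \ac F_N(x)).
\]

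Set $\Phi : x \mapsto (\ac F_1(x), \ldots, \ac F_N(x))$ and $W := \set{\zeta \in (\K^\times)^N : \K \models \tilde\varphi(\zeta)}$, so that $\varphi((K^\times)^e) = \bigcup_{\zeta \in W} \Phi^{-1}(\zeta)$. For each $x$ at which $\Phi$ is defined, applying Lemma~\ref{residue:point:open} to each $F_j$ with $\zeta_j := \ac F_j(x) \in \K^\times$ yields a clopen neighborhood $U_{x,j}$ of $x$ on which $\ac F_j$ is identically $\zeta_j$; the finite intersection $U_x := \bigcap_{j=1}^N U_{x,j}$ is therefore a clopen neighborhood of $x$ on which $\Phi$ is constant. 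Hence every fiber $\Phi^{-1}(\zeta)$ with $\zeta \in (\K^\times)^N$ is clopen, and exactly as in the final display of Lemma~\ref{type c:clopen},
\[
\varphi((K^\times)^e) = \bigcup_{\zeta \in W} \Phi^{-1}(\zeta) = (K^\times)^e \setminus \bigcup_{\zeta \notin W} \Phi^{-1}(\zeta),
\]
which is both open and closed.

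The main obstacle is purely organisational: verifying that every free $X$-occurrence in $\varphi$ really does factor through one of the residue terms $\ac F_j(X)$, so that the passage from $\varphi$ to $\tilde\varphi$ is legitimate and the residue-field formula $\tilde\varphi$ does not see $x$ at all except via $\Phi(x)$. Once this factorisation is recorded, the topological content reduces to the elementary fact that a locally constant map has clopen fibers, and the lemma drops out by the same pattern already used in the proof of Lemma~\ref{type c:clopen}.
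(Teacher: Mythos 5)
Your proposal is correct and takes essentially the same route as the paper: the truth of a Type~IV formula at $x$ depends only on the tuple $(\ac F_1(x),\ldots,\ac F_N(x))$, each level set $\set{x : \ac F_i(x) = \zeta_i}$ is clopen by Lemma~\ref{residue:point:open}, and the defined set is written both as a union of these fibers over the solution set in the residue field and as the complement of the union over the remaining residue tuples, hence clopen. The paper's set $B$ plays exactly the role of your $W$, so there is nothing substantive to compare.
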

\begin{proof}
Let $\varphi(X)$ be of the form $Q_1 \ldots Q_k \,\, \psi(X)$
where $\psi(X)$ is a disjunction of conjunctions of formulas of
the form $\sum_{i=1}^h \lambda_i\ac F_i(X) + \xi \,\, \Box \,\, 0$
with $\lambda_i, \xi \in \lbar{\Z}$. Let $B \sub (\K^{\times})^h$
be the set of all solutions of the formula; that is,
$\seq{\zeta_i} \in B$ if and only if
\[
S \models Q_1 \ldots Q_k \,\, \psi^*(\seq{\zeta_i}),
\]
where the formula $\psi^*(\seq{\zeta_i})$ is obtained by replacing
each $F_i(X)$ in $\psi(X)$ with $\zeta_i$. For each $\seq{\zeta_i}
\in (\K^{\times})^h$ let
\[
A_{\seq{\zeta_i}} = \bigcup \bigcap \set{x \in (K^{\times})^e:
\ac F_i(x) = \zeta_i}
\]
be the boolean combination of sets that corresponds to
$\psi^*(\seq{\zeta_i})$, where $e = \lh X$. By
Lemma~\ref{residue:point:open} each $A_{\seq{\zeta_i}}$ is clopen.
So
\[
\varphi((K^{\times})^e) = \bigcup_{\seq{\zeta_i} \in B}
A_{\seq{\zeta_i}} = (K^{\times})^e \mi \bigcup_{\seq{\zeta_i} \notin
B} A_{\seq{\zeta_i}}
\]
is clopen.
\end{proof}

\subsection{Step~3: Omitting a type}

For the rest of this section let $X, Y$ be two single variables.
To carry out Step~3 in Section~\ref{section:prelim} we will again
omit the 2-type~(\ref{the:2:type}) to show:

\begin{thm}\label{2 type:omitted}
There is a structure $S_1 = \langle K_1, \K_1, \Gamma_1 \cup
\set{\infty}, v_1, \ac_1 \rangle$ of $\mathcal{L}_{RRP}$ such that
$S_1 \equiv S$ and $v_1$ is of rank 1.
\end{thm}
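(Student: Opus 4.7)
The plan is to apply the Omitting Types Theorem to the $2$-type $\Phi(X,Y)$ of equation~(\ref{the:2:type}), in direct analogy with Theorem~\ref{mac:omitting:type}: omitting $\Phi$ in a model amounts to the Archimedean property for pairs of positive-valuation elements, which for an ordered abelian group is equivalent to rank~$1$. So any countable $S_1 \equiv S$ omitting $\Phi$ has $v_1$ of rank~$1$, and it suffices to show that $\Phi$ is not isolated modulo $\Th(S)$. Suppose for contradiction some $\pi(X,Y) \in \mathcal{L}_{RRP}$ isolates $\Phi$. By the $K$-sort QE hypothesis, Remark~\ref{qe:with:infty}, and Lemma~\ref{simple:formula:separated}, $\pi$ may be taken as a disjunction of conjunctions of literals of Types~I--IV listed above; fix a disjunct $\pi_0 = \sigma_0 \wedge \chi_0 \wedge \theta_0$ and witnesses $r, t \in \MM_v$ with $r \ll t$ satisfying $\pi_0$.

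The argument of Lemma~\ref{no:equation} shows that $\sigma_0$ contains no equations and, more importantly, every nonzero polynomial $F(X,Y) \in \Z[X,Y]$ occurring anywhere in $\pi_0$ has a unique dominant monomial $c_{i_0 j_0}X^{i_0}Y^{j_0}$ at $(r,t)$ ($j_0$ minimal, then $i_0$ minimal) that controls both $vF(r,t) = i_0 v(r) + j_0 v(t)$ and $\ac F(r,t) = \ac(c_{i_0 j_0})(\ac r)^{i_0}(\ac t)^{j_0}$. The intended substitution is $(r,t) \mapsto (r',t') := (r t^u,\, \lambda t^{ku+1})$ for a positive integer $u$ divisible by the data below, a sufficiently large $k$, and a unit $\lambda \in \OO_v \mi \MM_v$ to be chosen. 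For such $k$ the same $(i_0,j_0)$ dominates $F(r',t')$, and one computes
\[
vF(r',t') = vF(r,t) + u(i_0 + j_0 k)\,v(t), \qquad \ac F(r',t') = (\ac\lambda)^{j_0}(\ac t)^{u(i_0 + j_0 k)}\,\ac F(r,t).
\]
For any fixed $k \geq 1$ the valuations $v(r')$ and $v(t')$ are Archimedean-comparable, so $(r',t')$ does not realize~$\Phi$.

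Type~I inequalities hold at $(r',t')$ by the dominant-monomial analysis. Type~II literals, which after $\Gamma$-QE are Presburger linear conditions on the $vF_j$'s, and Type~III divisibility conditions $D_h(\cdots)$, are preserved by routine Presburger bookkeeping once $u$ is divisible by every modulus $h$ that appears and $k$ is chosen so that the leading $k$-coefficient of each Type~II inequality determines its truth value. The main obstacle is Type~IV: the residue-field formula $\chi_0$ is imposed on the tuple $(\ac F_n(X,Y))_n$, and after substitution each coordinate acquires a multiplicative factor $(\ac\lambda)^{j_{0,n}}(\ac t)^{u(i_{0,n}+j_{0,n}k)}$. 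The cleanest strategy is to force all these factors to equal~$1$, reducing the problem to the finite multiplicative system $(\ac\lambda)^{j_{0,n}} = (\ac t)^{-u(i_{0,n}+j_{0,n}k)}$ in $\K^\times$; choosing $u$ divisible by $\lcm\{j_{0,n}\}$ and $k$ in an arithmetic progression compatible with the subgroup of $\K^\times$ generated by $\ac t$ makes the system consistent, and any solution $\ac\lambda \in \K^\times$ lifts to a unit $\lambda$ since $\ac$ restricts to the residue map on $\OO_v \mi \MM_v$.

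Once the Type~IV system is solved, $(r',t') \models \pi_0$ while $r' \asymp t'$, contradicting $\pi \vdash \Phi$, and the Omitting Types Theorem delivers the required $S_1$. The main technical difficulty, and the step to verify most carefully, is the simultaneous solvability of this multiplicative system in $\K^\times$ (especially when $\ac t$ has infinite order, which may require augmenting the substitution by a second unit in the $X$-coordinate); the other pieces -- equation-freeness, uniqueness of the dominant monomial, and Presburger bookkeeping for Types~II and~III -- are direct adaptations of their counterparts in Section~\ref{section:macintyre}.
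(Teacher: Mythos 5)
Your overall frame (omit the 2-type $\Phi$, so it suffices to show $\Phi$ is not isolated) matches the paper, but the core of your non-isolation argument has a genuine gap, and it lies exactly where the Denef--Pas setting differs from the Macintyre setting: your substitution $(r,t)\mapsto(rt^{u},\lambda t^{ku+1})$ moves the first coordinate, while Type~II literals can pin down $v(X)$ itself, and then no choice of $u,k,\lambda$ preserves them. For instance $\pi_{0}$ may contain the conjunct $v(X)+0 < v(1)+3$ (a legitimate Type~II literal, satisfiable with $v(r)\in\{1,2\}$), or more generally any literal whose two polynomials have the same dominant $Y$-exponent, such as $v(XY) < v(Y)+3$; after your substitution $v(r')=v(r)+u\,v(t)$ exceeds every integer, so the literal fails, and units cannot repair this since they do not change valuations. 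This is why ``routine Presburger bookkeeping'' with $u$ divisible by the moduli and $k$ large cannot work: when the leading $k$-coefficients (the $j$-exponents) of the two sides coincide, the truth value after substitution is governed by the residual shift $u(i_{1}-i_{2})v(t)$, which flips precisely those inequalities that encode bounds on $v(r)$. (Your Type~IV multiplicative system is also unsolvable with a single unit $\lambda$ when $\ac t$ has infinite order and the dominant exponent pairs are non-proportional; that defect could be cured by a second unit on the $X$-coordinate, but the Type~II failure cannot, since it is a failure at the level of valuations.)

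The paper avoids all of this by never touching $x$: it keeps $x$ fixed and replaces only $y$ by some $t$ with $x\asymp t$. For Types~II and~III it works inside $\Gamma(n)\cup\{\infty\}$, the $\Z$-group generated by $n=v(x)$, which is an elementary substructure of $\Gamma\cup\{\infty\}$ and in which $\{kn : k\in\N\}$ is cofinal; since $\ex{N}(kn<N<\infty \wedge \varphi^{*}(n,N))$ holds in $\Gamma\cup\{\infty\}$ (witnessed by $v(y)$), it has a witness $m\in\Gamma(n)$ with $kn<m<ln$, and any $t$ with $v(t)=m$ satisfies all the value-group conditions (Lemma~\ref{type B:moved}). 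For Type~IV it additionally takes $t$ with $\ac t=\ac y$: since the dominant monomial controls the angular component, $\ac F_{i}(x,t)=\ac F_{i}(x,y)$ on the nose (Lemma~\ref{type d:moved}), so no multiplicative system in $\K^{\times}$ ever arises. Your dominant-monomial analysis and the equation-freeness step are fine; to make the proof work, replace the Macintyre-style double substitution by this ``fix $x$, move only $y$'' argument.
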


\begin{lem}\label{type B:moved}
Let $\varphi(X, Y)$ be a conjunction of formulas of Type~II and~III,
where $X, Y$ are the only free variables. Let $x, y \in \MM$ be
nonzero such that $x \ll y$ and $S \models \varphi(x, y)$. Then for
every natural number $k$ there is an $m \in \Gamma$ with $v(x^k) < m
< v(x^l)$ for some $l
> k$ such that for every $t \in \MM$ with $v(t) = m$ we have
\[
S \models \varphi(x, t).
\]
\end{lem}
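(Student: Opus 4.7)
The strategy is to show that under $x\ll y$, the valuation $v(F(x,t))$ for $t$ with $v(t)=m$ stabilizes, for $m$ in a suitable range, to a simple linear expression in $m$; each Type~II and Type~III literal then becomes a controllable Presburger condition on $m$ that can be matched to the value at $y$.

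First I would carry out a dominant-term analysis. Expand each polynomial $F(X,Y)\in\Z[X,Y]$ appearing in $\varphi$ as $F(X,Y)=\sum_j F_j(X)Y^j$ and set $j_F=\min\set{j:F_j\not\equiv 0}$. The argument of Lemma~\ref{no:equation}, combined with $\cha\K=0$ and the fact that $\Gamma$ is torsion-free, yields $v(F_j(x))=i_jv(x)$, where $i_j$ (depending on $F$) is the smallest power of $X$ with nonzero coefficient in $F_j$; in particular $F_j(x)\neq 0$ whenever $F_j\not\equiv 0$. Since $v(y)>n\,v(x)$ for every $n$, the term $F_{j_F}(x)y^{j_F}$ uniquely attains the minimum valuation, so $v(F(x,y))=i_{j_F}v(x)+j_Fv(y)$. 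Comparing the finitely many pairs $(i_j,j)$ arising from the polynomials in $\varphi$ yields a threshold $L_0\in\N$ such that the same dominance holds whenever $v(t)=m>L_0\,v(x)$, giving $v(F(x,t))=i_{j_F}v(x)+j_Fm$.

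Under this dominance each Type~II literal $vF_1(x,\cdot)+n_1\,\Box\,vF_2(x,\cdot)+n_2$ becomes the $\Gamma$-condition
\[
(j_{F_1}-j_{F_2})m\,\Box\,(i_{j_{F_2}}-i_{j_{F_1}})v(x)+(n_2-n_1),
\]
and one performs a case analysis using that the literal already holds at $v(y)$ with $v(y)\gg v(x)$: if $j_{F_1}=j_{F_2}$ the condition is independent of $m$; for $\Box\in\set{<,>}$ with $j_{F_1}\neq j_{F_2}$ the direction of inequality must agree with the sign of $j_{F_2}-j_{F_1}$, and then the condition holds for all sufficiently large $m$; the case $\Box={=}$ with $j_{F_1}\neq j_{F_2}$ cannot occur, as it would force $v(y)$ to lie at a specific value bounded in absolute value by $|i_{j_{F_2}}-i_{j_{F_1}}|v(x)+|n_2-n_1|$, contradicting $x\ll y$; and $\Box={\neq}$ with $j_{F_1}\neq j_{F_2}$ at worst excludes one specific value of $m$. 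Each Type~III literal $D_h(vF(x,\cdot)+n)$ reduces under dominance to $D_h(i_{j_F}v(x)+j_Fm+n)$, whose truth agrees with that at $y$ precisely when $m\equiv v(y)\pmod{h/\gcd(h,j_F)}$; letting $H\in\N$ be the least common multiple of these moduli over all Type~III literals, the combined condition reduces to $m\equiv v(y)\pmod{H}$.

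Finally I would pick $m$. Set $L=\max(k,L_0)+L_1$, where $L_1$ absorbs the ``sufficiently large'' thresholds from the Type~II inequalities, and take $l=L+H+2$. Since $v(x)\geq 1$ in the $\Z$-group $\Gamma$, the interval $(L\,v(x),l\,v(x))$ contains at least $(H+2)v(x)-1\geq H+1$ consecutive elements, hence meets every residue class modulo $H$ and still leaves room to avoid the finitely many excluded values from the Type~II $\neq$ literals. Pick such an $m$ with $m\equiv v(y)\pmod{H}$; then $v(x^k)<m<v(x^l)$, and by the reductions every literal of $\varphi(x,t)$ holds for every $t\in\MM$ with $v(t)=m$. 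The main obstacle is the Type~II equality case, where the observation that $x\ll y$ rules out the degenerate subcase $j_{F_1}\neq j_{F_2}$ is essential.
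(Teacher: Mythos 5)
Your argument is correct, and its first half coincides with the paper's proof: both reduce to the regime where, once $v(t)$ exceeds a fixed multiple of $v(x)$ determined by the $X$-exponents, $v(F_i(x,t))=e_iv(x)+d_iv(t)$ with $e_i,d_i$ independent of $t$, so that $\varphi(x,t)$ becomes a Presburger condition on $v(t)$. Where you diverge is in how you produce $m$: the paper substitutes variables for $v(x),v(t)$, obtaining a formula $\varphi^*(N_1,N_2)$ of $\mathcal{L}_{Pr\infty}$, and then transfers the existential statement $\ex{N}(kn<N<\infty\wedge\varphi^*(n,N))$ (witnessed by $v(y)$) into the submodel $\Gamma(n)\cup\{\infty\}$ generated by $n=v(x)$, using QE/model-completeness of $\mathcal{L}_{Pr\infty}$ and the cofinality of $\{kn:k\in\N\}$ in $\Gamma(n)$ to bound the witness $m$ below some $v(x^l)$. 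You instead solve the Presburger conditions by hand: a case analysis of the Type~II literals (ruling out the equality case with unequal $Y$-degrees via $x\ll y$), reduction of the Type~III literals to a single congruence $m\equiv v(y)\pmod H$, and an explicit choice of $m$ in an interval $(Lv(x),lv(x))$ using that $1$ is the least positive element. Your route is more elementary and self-contained, and it makes the arithmetic of the witness visible; the paper's route is softer and is exactly what makes Remark~\ref{other:group:also:do} work — it uses only cofinality in the generated submodel and model-completeness of the $\Gamma$-theory, so it ports to other value-group languages, whereas your explicit congruence analysis is tied to the concrete Presburger structure. Two small imprecisions, neither fatal: the Type~III truth values agree \emph{whenever} $m\equiv v(y)\pmod{h/\gcd(h,j_F)}$ (not ``precisely when''), and the specific choice $l=L+H+2$ can be too tight if several $\neq$-exclusions happen to land on the few elements of the target residue class in that window; since $l$ is existentially quantified in the statement, simply take $l$ larger (e.g.\ proportional to the number of $\neq$-literals) so the window contains enough representatives of the class of $v(y)$ modulo $H$.
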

\begin{proof}
Let $F_i(X, Y) \in \Z[X, Y]$ run through all the distinct
polynomials that appear in $\varphi(X, Y)$. We may assume that
each $F_i(X, Y)$ is written in the
form~(\ref{poly:normal:expansion})
and~(\ref{poly:normal:subexpansion}). It is not hard to see that
if we choose a $k_0 > 0$ that is larger than the sum of all the
exponents of $X$ that appear in all the $F_i(X, Y)$'s, then, for
each nonzero $t \in \MM$, if $v(t) > v(x^{k_0})$ then
\begin{equation}\label{ploy:reduced}
v (F_i(x, t)) = v(x^{e_i}t^{d_i})
\end{equation}
for some integers $e_i, d_i \geq 0$ with $e_i < k_0$. Clearly in
this situation $e_i, d_i$ are independent of the choice of $t$.
Substituting two free variables $N_1, N_2$ for $v(x), v(t)$
respectively we may rewrite $\varphi(x, t)$ as a formula
$\varphi^*(N_1, N_2)$ in $\mathcal{L}_{Pr\infty}$. So we have
\[
\Gamma \cup \{\infty\} \models \varphi^*(v(x), v(y)).
\]
Now let $v(x) = n$. Let $\Gamma(n)$ be the smallest $\Z$-group
generated by $n$ in $\Gamma$. It is easy to see that the set $\{kn:
k \in \N\}$ is cofinal in $\Gamma(n)$. Clearly $\Gamma(n) \cup
\{\infty\}$ is an elementary substructure of $\Gamma \cup
\{\infty\}$. So for every natural number $k \geq k_0$ we have
\[
\Gamma(n) \cup \{\infty\} \models \ex{N}(kn < N < \infty \wedge
\varphi^*(n, N)).
\]
So for some $m \in \Gamma(n)$ and some $l > k$ we have
\[
\Gamma(n) \cup \{\infty\} \models kn < m < ln \wedge \varphi^*(n,
m).
\]
So for every $t \in \MM$ with $v(t) = m$ we have
\[
\Gamma \cup \{\infty\} \models \varphi^*(n, v(t)).
\]
By the choice of $k_0$ this clearly implies that
\[
S \models \varphi(x, t),
\]
as desired.
\end{proof}

\begin{rem}\label{other:group:also:do}
A close examination of the proof of Lemma~\ref{type c:clopen}
shows that, much as Lemma~\ref{clopen:type:IV}, regardless of what
language the group $\Gamma$ uses and what additional structure it
has, $\Gamma$-formulas without free $\Gamma$-variables always
define clopen sets. Therefore Lemma~\ref{type B:moved} is actually
the only place where we need to use some special properties that
hold in $\Z$-groups, namely
\begin{enumerate}
 \item for any element $n$ in the $\Gamma$-sort the set $\{kn: k \in \N\}$ is cofinal in
 the submodel generated by $n$;
 \item the theory of the $\Gamma$-sort in $\LL_{\Gamma\infty}$ is
 model-complete.
\end{enumerate}
So our converse QE result holds for any group $\Gamma$ and any
language $\LL_{\Gamma}$ such that these two properties are
satisfied.
\end{rem}

\begin{lem}\label{type d:moved}
Let $\varphi(X, Y)$ be a formula of Type~IV, where $X, Y$ are the
only free variables. Let $x, y \in \MM$ be nonzero such that $x
\ll y$ and $S \models \varphi(x, y)$. For every sufficiently large
natural number $k$, if $t \in \MM$ is such that $v(t) \geq v(x^k)$
and $\ac t = \ac y$ then
\[
S \models \varphi(x, t).
\]
\end{lem}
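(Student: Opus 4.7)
My plan for the lemma is to show that the truth of a Type~IV formula $\varphi(X,Y)$ on a pair in $K^{\times}\times K^{\times}$ depends only on the tuple of residue-field values $(\ac F_i(X,Y))_{i\leq h}$, where $F_1,\ldots,F_h\in\Z[X,Y]$ are the finitely many polynomials appearing under $\ac$ in $\varphi$. Concretely, $K$-variables occur in $\varphi$ only inside the $\ac$-terms, so substituting fresh $\K$-variables $\Lambda_i$ for each $\ac F_i(X,Y)$ produces a pure $\K$-sort formula $\varphi^{*}(\Lambda_1,\ldots,\Lambda_h)$ with $S \models \varphi(x,y)$ if and only if $\K \models \varphi^{*}(\ac F_1(x,y),\ldots,\ac F_h(x,y))$. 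Hence it will suffice to choose $k$ large enough that $\ac F_i(x,t)=\ac F_i(x,y)$ for every $i$ and every $t$ satisfying the hypothesis.

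To compute $\ac F_i(x,y)$ via a dominant monomial, I would write $F_i(X,Y) = \sum_j F_{i,j}(X)\,Y^j$ and let $j_0=j_0(i)$ be the least $j$ with $F_{i,j}(X)\not\equiv 0$. The argument in the proof of Lemma~\ref{no:equation} shows that the nonzero monomial summands of a nonzero polynomial in $\Z[X]$ evaluated at $x\in\MM\setminus\set{0}$ have pairwise distinct valuations (using $\cha\K=0$), so $F_{i,j}(x)\neq 0$ whenever $F_{i,j}(X)\not\equiv 0$. Since $x\ll y$, raising $j$ by $1$ adds $v(y)$ to the valuation of the corresponding summand, while the difference $v(F_{i,j+1}(x))-v(F_{i,j}(x))$ lies in the subgroup generated by $v(x)$, which is dominated by $v(y)$; hence the summand with index $j_0$ has strictly smallest valuation among the nonzero summands. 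Writing $F_i(x,y) = F_{i,j_0}(x)\,y^{j_0}\,(1+u)$ with $v(u)>0$, the homomorphism property of $\ac$ together with $\ac(1+u)=1$ yields
\[
\ac F_i(x,y) \;=\; \ac F_{i,j_0}(x)\cdot (\ac y)^{j_0}.
\]

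Now I would choose $k$ large enough that $(j-j_0)\,v(x^k)$ dominates $v(F_{i,j_0}(x))-v(F_{i,j}(x))$ for all relevant $i,j$; since only finitely many polynomials (and hence only finitely many $v(x)$-scale bounds) are involved, such a $k$ depending only on $x$ and the $F_i$'s exists. Then the same dominant-term analysis applied to any $t\in\MM$ with $v(t)\geq v(x^k)$ gives $\ac F_i(x,t) = \ac F_{i,j_0}(x)\cdot (\ac t)^{j_0}$. Because $\ac t=\ac y$ and $\ac$ is a homomorphism on $K^{\times}$, $(\ac t)^{j_0}=(\ac y)^{j_0}$, so $\ac F_i(x,t)=\ac F_i(x,y)$ for every $i$, and the reduction in the first paragraph yields $S\models\varphi(x,t)$. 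The main obstacle I anticipate is simply the combinatorial bookkeeping to find one $k$ uniform across all $F_i$ appearing in $\varphi$; there is no deeper algebraic difficulty, since the hypothesis $x\ll y$ already guarantees that nothing of $v(x)$-scale can compete with $v(y)$.
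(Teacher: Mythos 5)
Your proposal is correct and follows essentially the same route as the paper: isolate the lowest $Y$-power term of each $F_i(X,Y)$, take $k$ beyond the $X$-degrees so that this term dominates in valuation, deduce $\ac F_i(x,t)=\ac F_{i,j_0}(x)\cdot(\ac t)^{j_0}$ (the paper phrases this via Lemma~\ref{angular:compo:value:up}, you via $\ac(1+u)=1$, which is the same content), and conclude from $\ac t=\ac y$ that all relevant $\ac$-values, hence the truth of the Type~IV formula, are unchanged. Your explicit reduction of a Type~IV formula to a pure $\K$-sort formula in the residues $\ac F_i$ is just a cleaner statement of the step the paper leaves as ``so clearly $S\models\varphi(x,t)$.''
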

\begin{proof}
Let $F_i(X, Y) \in \Z[X, Y]$ run through all the distinct
polynomials that appear in $\varphi(X, Y)$. As in the previous
lemma we may choose a $k > 0$ that is larger than the sum of all
the exponents of $X$ that appear in all the $F_i(X, Y)$'s so that
for each nonzero $t \in \MM$, if $v(t) > v(x^{k})$ then the
condition~(\ref{ploy:reduced}) holds for each $F_i(X, Y)$. For
such a $t \in \MM$, if $F_i(X, Y)$ is written in the
form~(\ref{poly:normal:expansion})
and~(\ref{poly:normal:subexpansion}), then we have
\[
v(F_b(x)t^b + \ldots + F_0(x)) = vF_0(x)
\]
and
\[
v(F_b(x)t^b + \ldots + F_1(x)t) > vF_0(x)
\]
if $b > 0$, where $F_0(X)$ is written as
\[
X^f(s_aX^a + \ldots + s_0),
\]
with $s_0, \ldots, s_a \in \Z$ and $s_0$ nonzero. So by
Lemma~\ref{angular:compo:value:up} we have
\[
\ac F_i(x, t) = \ac (t^d(F_b(x)t^b + \ldots + F_0(x))) = (\ac t)^d
\cdot \ac F_0(x)
\]
and
\[
\ac F_0(x) = (\ac x)^{f} \cdot \ac s_0.
\]
In particular, since $x \ll y$, we have
\[
\ac F_i(x, y) = (\ac x)^{f} \cdot (\ac y)^d \cdot \ac s_0.
\]
Now if $\ac t = \ac y$ then we have
\begin{equation*}
\ac F_i(x, t) = (\ac x)^{f} \cdot (\ac t)^d \cdot \ac s_0 = (\ac
x)^{f} \cdot (\ac y)^d \cdot \ac s_0 = \ac F_i(x, y).
\end{equation*}
So clearly
\[
S \models \varphi(x, t),
\]
as desired.
\end{proof}

\begin{lem}\label{type:not:isolated}
The 2-type $\Phi(X, Y)$ is not isolated modulo $\Th(S)$.
\end{lem}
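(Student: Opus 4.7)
The plan is to argue by contradiction. Suppose a formula $\pi(X,Y)$ isolates $\Phi(X,Y)$ modulo $\Th(S)$. By QE in the $K$-sort and Lemma~\ref{simple:formula:separated}, I may assume $\pi$ is a disjunction of conjunctions of literals of Types~I--IV. Pick nonzero $x,y\in\MM$ with $x\ll y$ and $S\models\pi(x,y)$, and fix a disjunct $\pi_0$ that $(x,y)$ satisfies. The plan is then to exhibit $t\in\MM$ with $S\models\pi_0(x,t)$ but with $x$ and $t$ comparable; since $\pi_0$ is a disjunct of $\pi$, the resulting $(x,t)\models\pi$ violates $\pi\proves\Phi$.

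First, $\pi_0$ contains no Type~I equation, since Lemma~\ref{no:equation} gives $F(x,y)\neq 0$ for every nonzero $F\in\Z[X,Y]$. Next, fix $k_0\in\N$ exceeding the sum of the $X$-exponents of every polynomial occurring in $\pi_0$, so that the reduction~(\ref{ploy:reduced}) holds uniformly whenever $v(t)>v(x^{k_0})$. Applying Lemma~\ref{type B:moved} to the conjunction of the Type~II and~III literals of $\pi_0$ with $k=k_0$ yields $m\in\Gamma$ and $l>k_0$ with $v(x^{k_0})<m<v(x^l)$ such that every $t'\in\MM$ with $v(t')=m$ satisfies those literals at $(x,t')$.

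I then refine the choice to a $t$ in the coset $C=\{t'\in\MM : v(t')=m,\ \ac t'=\ac y\}$. The coset is nonempty: given any $t_1$ with $v(t_1)=m$, multiply by a unit $u\in\OO\setminus\MM$ with $\ac u=\ac y/\ac t_1$, which exists since $\ac$ restricted to $\OO\setminus\MM$ surjects onto $\K^\times$. It is infinite as well, because by Lemma~\ref{angular:compo:value:up} it contains $t_0+z$ for every $z\in K$ with $v(z)>m$, and since $\cha K=0$ and $vK$ has no maximum, there are infinitely many such $z$. For each Type~I inequation $F(X,Y)\neq 0$ of $\pi_0$, the polynomial $F(x,Y)\in K[Y]$ is nonzero (it takes the value $F(x,y)\neq 0$) and therefore has only finitely many roots; choose $t\in C$ avoiding the finite union of these root sets. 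The Type~I, II, III literals of $\pi_0$ now all hold at $(x,t)$, and Lemma~\ref{type d:moved}---applicable since $v(t)\geq v(x^{k_0})$ and $\ac t=\ac y$---supplies the Type~IV literals. Thus $S\models\pi_0(x,t)$, while $v(t)<v(x^l)$ forces $x\asymp t$, which is the required contradiction.

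The main subtlety is the uniform choice of $k_0$, which must simultaneously enable the valuational reduction of Lemma~\ref{type B:moved} and the angular reduction of Lemma~\ref{type d:moved} across every polynomial appearing in $\pi_0$; once $k_0$ is fixed, the existence and infinitude of the coset $C$ and the avoidance of the finitely many bad roots are straightforward.
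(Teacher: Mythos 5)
Your proof is correct and follows essentially the same route as the paper: contradiction via QE and Lemma~\ref{simple:formula:separated}, elimination of $K$-sort equations by Lemma~\ref{no:equation}, and then Lemmas~\ref{type B:moved} and~\ref{type d:moved} to produce a pair $(x,t)$ with $x\asymp t$ still satisfying $\pi$. The only (harmless) variation is that you handle the Type~I inequations by root-avoidance inside the infinite set $\{t': v(t')=m,\ \ac t'=\ac y\}$, whereas the paper observes they hold automatically because the reduction~(\ref{ploy:reduced}) forces $vF(x,t)<\infty$ once $v(t)>v(x^{k_0})$.
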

\begin{proof}
Suppose for contradiction that there is a formula $\pi(X, Y)$ such
that
\begin{itemize}
 \item $\ex{X, Y} \pi(X, Y) \in \Th(S)$, and
 \item $\pi(X, Y) \proves \Phi(X, Y)$ modulo $\Th(S)$.
\end{itemize}
Since $\Th(S)$ admits QE in the $K$-sort, by
Lemma~\ref{simple:formula:separated}, $\pi(X, Y)$ is equivalent to
a disjunction of conjunctions of formulas of Type I,~II,~III,
and~IV. Without loss of generality we may assume that $\pi(X, Y)$
is just a conjunction of formulas of those four types. Let $x \ll
y$ be such that $S \models \pi(x, y)$. We shall show that there is
a $t \in \MM$ with $x \asymp t$ such that
\[
S \models \pi(x, t).
\]
This yields a contradiction.

By Lemma~\ref{no:equation} $\pi(X, Y)$ cannot contain equalities
in the $K$-sort. Clearly, for sufficiently large $k$, if $t \in
\MM$ is nonzero and $v(t) \geq v(x^k)$ then the pair $(x, t)$
satisfies the inequality in the $K$-sort that appear in $\pi(X,
Y)$. Finally, by Lemma~\ref{type B:moved} and~\ref{type d:moved}
we can choose a sufficiently large $k$ and a $t \in \MM$ with
$v(x^k) < v(t) < v(x^l)$ for some $l > k$ and $\ac t = \ac y$ such
that $S \models \pi(x, t)$, as desired.
\end{proof}

Now Theorem~\ref{2 type:omitted} follows immediately from this lemma
and the Omitting Types Theorem.

\begin{rem}
It is not hard to see that, by considering the formula $\ex{X}
\pi(X, Y)$ as in Lemma~\ref{t:going:down} and
Theorem~\ref{mac:omitting:type}, the proofs in this section can be
modified to cover the case that $\cha(\K) = p > 0$ and $(K, v)$ is
tight. We no longer need the condition that there is a
conservative subgroup since now the theory of the $\Gamma$-sort is
already model-complete.
\end{rem}

\section{Naturality of language}\label{section:general:pers}

In this section we describe a general perspective on QE and
converse QE results. This concerns the usually vague notion that a
language is ``natural'' for a mathematical structure. Here we
propose a precise criterion of naturality by which a language
$\LL$ can be judged with respect to a chosen property $P$:

\begin{crit}\label{criterion:naturality}
Modulo some basic properties (to be specified in context), $\LL$
is natural with respect to $P$ if and only if any structure of
$\LL$ that has $P$ admits QE in $\LL$ and any structure of $\LL$
that admits QE in $\LL$ has $P$.
\end{crit}

In other words, $\LL$ is natural with respect to $P$ if and only
if QE in $\LL$ characterizes $P$. Hence in order to show that
$\LL$ is natural with respect to $P$ one has to show QE and
converse QE.

By Tarski's Theorem and Theorem~\ref{converse:qe:realclosed},
modulo the defining properties of ordered fields, the language
$\LL_{OR}$ is natural with respect to real-closedness. Similarly
by Pas's QE result in~\cite{Pa89} and
Theorem~\ref{converse:qe:rrp}, modulo the other properties
presented at the beginning of Section~\ref{section:denef:pas}, the
language $\LL_{RRP}$ is natural with respect to henselianity.
However, we need the extra conditions described in
Section~\ref{section:macintyre} to establish the naturality of
$\LL_{Mac}$ or $\LL_{Mac, D}$.

Let us examine a simpler case: the $\Z$-groups. A $\Z$-group is a
group that is elementarily equivalent to the group $\Z$ of the
integers in the Presburger language $\mathcal{L}_{Pr}$. By
Presburger's Theorem the theory of $\Z$-groups admits QE in
$\mathcal{L}_{Pr}$. The proof uses the condition that 1 is the
least positive element. However, by
Criterion~\ref{criterion:naturality}, modulo everything else in
the theory, $\mathcal{L}_{Pr}$ is not natural with respect to this
condition. This is a consequence
of~\cite[Corollary~2.11]{weispfenning:1981} which implies that the
structure
\[
\langle \Z \times \Q, +, -, <, 0, 1, D_n \rangle_{n > 1}
\]
admits QE, where everything is interpreted in the standard way
except that $<$ is the lexicographic ordering and the constant~1
designates the element $(1, 0)$.

What about other properties of $\Z$-groups? In every $\Z$-group,
for each divisibility predicate $D_n$, the following holds:
\begin{equation}\label{dn:property}
\fa{x} (D_n(x) \vee D_n(x+1) \vee \ldots  \vee
 D_n(x+n-1)).
\end{equation}
This property is also needed for Presburger's Theorem. Now if we
weaken Criterion~\ref{criterion:naturality} by requiring $P$ to be
equivalent to model-completeness (that is, every formula is
equivalent to an existential formula), then $\mathcal{L}_{Pr}$ is
again not natural with respect to the
property~(\ref{dn:property}). This is again a consequence
of~\cite[Corollary~2.11]{weispfenning:1981} which also implies
that the structure
\[
\langle \Z \times \Z, +, -, <, 0, 1, 1', D_n \rangle_{n > 1}
\]
admits QE, where $<$ is again the lexicographic ordering, the
constant~1 designates the least positive element $(0, 1)$, and the
constant~$1'$ designates the element $(1, 0)$. Note that $(1, 0)$
is not definable with the rest of the structure. But, instead of
$(1, 0)$, any element that satisfies the formula
\[
\fa {x}\bigvee_{i,j < n} D_n(x + i \cdot y + j \cdot 1)
\]
for each $n > 1$ can be used in the QE procedure. So every formula
is equivalent to an existential formula in the reduct of the
structure to $\mathcal{L}_{Pr}$.

\begin{ques}
Is the language $\mathcal{L}_{Pr}$ natural with respect to the
property~(\ref{dn:property}) by
Criterion~\ref{criterion:naturality}? That is, is there a
commutative group with discrete total ordering that admits QE in
$\mathcal{L}_{Pr}$ but does not satisfy the
property~(\ref{dn:property})?
\end{ques}

\bibliographystyle{asl}
\bibliography{MYbib}
\end{document}